\newcommand{\doi}[1]{\url{http://dx.doi.org/#1}}
\numberwithin{equation}{section}
\newtheorem{theorem}{Theorem}[section]
\newtheorem{lemma}[theorem]{Lemma}
\newtheorem{corollary}[theorem]{Corollary}
\newcommand{\R}{{\mathbb R}}
\newcommand{\Z}{{\mathbb Z}}
\newcommand{\N}{{\mathbb N}}
\newcommand{\mS}{\mathbb S}
\newcommand{\qall}{\quad\forall}
\newcommand{\cD}{{\mathcal D}}
\newcommand{\cM}{\mathcal M}
\newcommand{\cN}{{\cal N}}
\newcommand{\cO}{\mathcal O}
\newcommand{\cP}{\mathcal P}
\newcommand{\cR}{\mathcal R}
\newcommand{\inp}[2]{\left\langle{#1},{#2}
\right\rangle}
\newcommand{\inpro}[2]{\left\langle{#1},{#2}
\right\rangle}
\newcommand{\inprod}[2]{\left\langle{#1},{#2}
\right\rangle}
\newcommand{\norm}[2]{\left\|{#1}\right\|_{#2}}
\newcommand{\snorm}[2]{\left|{#1}\right|_{#2}}
\newcommand{\brac}[1]{\left(#1\right)}
\newcommand{\sett}[1]{\left\{#1\right\}}
\newcommand{\abs}[1]{\left|#1\right|}
\newcommand{\bsb}[1]{\boldsymbol{#1}}
\newcommand{\De}{\Delta}
\newcommand{\Om}{\Omega}
\newcommand{\al}{\alpha}
\newcommand{\ga}{\gamma}
\newcommand{\om}{\omega}
\newcommand{\vecx}{\boldsymbol{x}}
\newcommand{\vecy}{\boldsymbol{y}}
\newcommand{\veczero}{\boldsymbol{0}}
\newcommand{\vx}{\boldsymbol{x}}
\newcommand{\vy}{\boldsymbol{y}}
\newcommand{\vp}{\boldsymbol{p}}
\newcommand{\vq}{\boldsymbol{q}}
\newcommand{\vv}{\boldsymbol{v}}
\newcommand{\vecv}{\boldsymbol{v}}
\newcommand{\spann}{{\rm{span}}}
\newcommand{\dimm}{{\rm dim}}
\newcommand{\diam}{{\rm{diam}}}
\newcommand{\goto}{\rightarrow}
\newcommand{\wtd}{\widetilde}
\newcommand{\wth}{\widehat}
\newcommand{\slinN}{\sum_{\ell=0}^\infty\,\sum_{
m=-\ell}^{
\ell}}
\newcommand{\Ylm}{Y_{\ell,m}}
\newcommand{\vhlm}{\widehat{v}_{\ell,m}}
\DeclareMathOperator{\cardd}{{card \/}}
\DeclareMathOperator{\supp}{{supp \/}}
\DeclareMathOperator{\dett}{{det \/}}
\DeclareMathOperator{\intt}{{int \/}}
\DeclareMathOperator{\curl}{{curl \/}}
\title{
{
A posteriori error estimates for 
hypersingular integral equation on spheres
with spherical splines}
}
\author{
Duong Pham\thanks{ 
Vietnamese German
University, Le Lai street, Binh Duong New City, 
Binh Duong Province, Vietnam,
\texttt{ptduong01@gmail.com}}
\ 
and
Tung Le\thanks{Vietnamese German
University, Le Lai street, Binh Duong New City, 
Binh Duong Province, Vietnam,
\texttt{le.tung.hcmus@gmail.com}}
}
\date{\today}
\makeatletter \@addtoreset{equation}{section}
\begin{document}

\maketitle

\begin{abstract}
{
A posteriori residual and hierarchical upper bounds for the 
error estimates were proved when solving the 
hypersingular integral equation on the unit sphere by using the Galerkin method 
with spherical splines.} 
Based on these a posteriori error estimates, 
adaptive mesh refining procedures are used to reduce
complexity and computational cost of the discrete problems. 
Numerical experiments illustrate our theoretical results.

\end{abstract}

\noindent Keywords: Hypersingular integral equation; spherical spline; a 
posteriori error estimate; adaptivity.

\vspace{0.3cm}

\noindent
AMS Subject Classification: 65N30, 65N38, 65N15, 65N50

\section{Introduction}\label{Int}

Hypersingular integral equations have many 
applications, for example in
acoustics, fluid mechanics, elasticity and 
fracture mechanics
\cite{Kle05}.
These equations arise
from the boundary-integral reformulation of the 
Neumann
problem with the Laplacian in 
a bounded or unbounded domain, see e.g.~\cite{HsiWen08,Steinbach08}.
In this paper, we 
study the hypersingular integral equation on the 
unit sphere
\begin{equation} \label{eqn:Heqn}
 -N{u} + \omega^2\int_{\mS} u\,d\sigma = f \quad 
\text{on } \mS,
\end{equation}
where $N$ is the hypersingular integral operator 
given by
\begin{equation}\label{equ:hyper define}
N v(\vecx) := \frac{1}{4\pi} 
\frac{\partial}{\partial\nu_{\vecx}}
\int_{\mS} v(\vecy) 
\frac{\partial}{\partial\nu_{\vecy}}
\frac{1}{\snorm{\vecx-\vecy}{}} d\sigma_{\vecy},
\end{equation}
$\omega$ is some nonzero real constant,
and
$\mS$ is the unit sphere in $\mathbb{R}^3$, that 
is,
 $\mS = \{{\vecx} \in \mathbb{R}^3 : 
\snorm{\vecx}{} = 1\}$.
Here $\partial/\partial\nu_{\vecx}$ is the normal 
derivative
with respect to $\vecx$, and
$\snorm{\cdot}{}$ denotes the Euclidean norm. 
{The hypersingular integral equation on the unit sphere has applications in 
geophysics where people are solving Neumann problems in the interior or 
exterior of the surface of the Earth, see e.g.~\cite{FreGerSch98, 
GraKruSch03, Ned00, Sve83, 
TraLeGSloSte09a}.
Efficient solutions to the hypersingular integral equation on the sphere become
more demanding when given data are collected by satellites.
}

The equation~\eqref{eqn:Heqn} can be solved by using 
tensor products of univariate splines on regular grids which do not 
exist  when the data is given by satellites. 
Spherical radial basis functions appear to be more suitable for solving 
problems with scattered data, see e.g.~\cite{ 
MorNea02, NarWar02, PhamTran2014, TraLeGSloSte09a} and references therein.
However, the
resulting matrix system from this approximation is very ill-conditioned. 
{Even 
though
overlapping additive Schwarz preconditioners can be designed for this problem, 
the
condition number of the preconditioned system still depends on the number of 
subdomains
and the angles between subspaces; see~\cite{TraLeGSloSte10}.
}

{
The space of spherical splines defined on a spherical
triangulation seems particularly appropriate for use on the 
sphere~\cite{AlfNeaSch96a, AlfNeaSch96b}.
It consists of functions whose
pieces are spherical homogeneous polynomials joined together
with global smoothness, and thus has both the
smoothness and high degree of flexibility~\cite{FasSch98}.
That flexibility makes spherical splines become a powerful tool.
These splines have been used successfully in interpolation and data
approximation on spheres, see~\cite{AlfNeaSch96c, NeaSch04}.
In an attempt to use spherical splines in solving partial differential
equations,
Baramidze and Lai~\cite{BarLai05} use these functions to solve the
Laplace--Beltrami equation on the unit sphere. Later, Pham et al. use spherical 
splines to solve pseudodifferential equations on the unit 
sphere~\cite{PhamTranChernov11}.
The use of spherical splines
has some significant advantages. One of them is the ability 
to write the approximate solutions of the equations in  the form of linear 
combinations of Bernstein--B\'ezier polynomials which
play an extremely important role in computer aided geometric
design, data fitting and interpolation, computer vision and elsewhere; see 
e.g.~\cite{Far88, HosLas93}. Another advantage is
the ability to control the smoothness of a function and its
derivatives across edges of the triangulations; see~\cite{AlfNeaSch96a}.
}

In this paper, the hypersingular integral equation~\eqref{eqn:Heqn} will be 
solved by using 
the Galerkin method with
 spherical 
splines. 
{The linear system arising when solving this equation  by using spherical 
splines 
is also ill-conditioned. However, 
preconditioners can be used to tackle this problem, see~\cite{PhamTran12}.}
When solving the hypersingular integral equation~\eqref{eqn:Heqn} by using the 
Galerkin method with spherical splines associated with a regular and 
quasi-uniform spherical triangulation $\De$, 
an a priori error estimate is proved as follows 
\begin{equation}\label{a priori hyper}
\norm{u - u_{\De}}{H^{1/2}(\mS)}
\le C
h_{\De}^{s-1/2} 
\norm{u}{H^s(\mS)},
\end{equation}
see Theorem~5.1 in~\cite{PhamTranChernov11}.
Here, $s$ is any real number satisfying 
$1/2\le s\le d+1$ where $d$ is the degree of spherical splines, and
$C$ is a constant which is independent of 
the mesh size $h_{\De}$ and the 
exact 
(unknown) 
solution $u$.
The a priori error
estimate~\eqref{a priori hyper} reveals the rate of convergence 
in which the upper bound for the approximation error depends on the mesh size 
$h_{\De}$ and the unknown exact solution.
{However, the quasi-uniform condition on the mesh 
suggests 
that
uniform refinements of all spherical triangles must be applied when one wish to 
improve approximation quality.
This may lead to an unnecessary waste of computational efforts since 
contributions to the total error vary over different regions on the unit sphere.
}

A posteriori error estimates can 
provide numerical estimates of accuracy in terms 
of the source term and discrete solutions. 
In this paper, we shall prove {two kinds of a posteriori 
upper  bounds} for the errors when 
solving the hypersingular integral equation on 
the 
unit sphere by using Galerkin method with 
spherical splines.
{Firstly, we shall prove an a posteriori residual estimate} (see 
Theorem~\ref{t:upp bound}),
\begin{align}
\norm{u-u_{\De}}{H^s(\mS)}
\le 
C
\brac{ 
\sum_{\tau\in \De}
h_{\tau}^{2-2s}
\norm{f+ N u_{\De} - \om^2\inpro{u_{\De}}{1}}{L_2(\tau)}^2
}^{1/2},
\label{resi poster est}
\end{align}
where $s\in[0,1/2]$ and $C$ is a positive constant depending only on the 
smallest angle of $\De$. 
Here, the approximate solution $u_{\De}$ is found in the space 
$S_d^r(\De)$ of spherical splines of order $d$ and smoothness $r$ 
associated with $\De$
where $\De$ is a regular spherical triangulation.
Secondly, 
when the approximate solution $u_{\De}$ is found in the space of continuous 
{piecewise} linear spherical splines,
we shall prove another  
a posteriori error estimate (the hierarchical estimate),  
\begin{align}
\norm{u-u_{\De}}{H^{1/2}(\mS)}^2 
\le 
C
\sum_{\tau\in \De}
\sum_{
\vv_i\in V_{\De'}
\atop 
\vv_i\in \tau
}
\brac{
\frac{\inpro{f+N u_{\De} - \om^2 \inpro{u_{\De}}{1}} 
{B_{\vv_i}'}
}{\norm{B_{\vv_i}'}{H^{1/2}(\mS)}}
}^2,
\label{hier est tau intro}
\end{align}
see Corollary~\ref{c:Hier est}. Here, $\De'$ is a {fictional} refinement 
of 
$\De$ so that a saturation assumption is satisfied, $V_{\De'}$ is the 
set of all vertices of $\De'$, and $B_{\vv_i}'$ are nodal basis functions 
 associated with vertices $\vv_i$ of $\De'$. Precise definitions of spherical 
triangulations, spherical splines and their basis functions, and Sobolev spaces 
defined on the unit sphere $\mS$ will be presented in Section~\ref{s:Pre}.

Based on {these a posteriori error 
estimates, \eqref{resi poster est} and~\eqref{hier est tau intro},} we 
use adaptive mesh 
refinement techniques to create better approximation spaces. 
This results in a significant reduction in 
required degrees of freedom and computation 
time while preserving approximate accuracy.
{
This improvement is very important when we are solving geophysical 
problems which require considerably large numbers of data points. Furthermore, 
although all the results in this paper are established for problems on the unit 
sphere, they can be extended to more general (but related to the sphere) 
geometries, such as sphere-like geometries (see e.g.~\cite{AlfNeaSch96c, 
ChernovPham14, HuaYu06, LeGSteTra11}). This possible extension can broaden 
applications of our research.
}

The structure of the paper is as follows. In 
Section~\ref{s:Pre}, we will review
spherical splines, introduce the Sobolev spaces 
on 
the unit sphere to be used,
present the quasi-interpolation operator and 
the hypersingular integral equation. 
The proof for an a posteriori residual upper bound for the 
error estimate  is presented in 
Section~\ref{s:Error}. 
{In Section~\ref{s:hierarchical sec}, hierarchical basis techniques are 
used to 
prove a posteriori hierarchical error estimate when 
solving~\eqref{eqn:Heqn} 
by using continuous {piecewise} linear spherical splines.}
In 
Section~\ref{s:mesh ref}, we discuss  simple 
adaptive mesh refinement algorithms based on the 
a posteriori error estimates.
 The final section
(Section~\ref{s:num ex}) presents our numerical experiments 
which illustrate our theoretical results.

In this paper $C$ and $C_i$, for $i=1,\ldots, 5$, 
 denote generic constants which may take
different values at {different occurrences.}

\section{Preliminaries}\label{s:Pre}
In this section, we will first review spherical 
splines 
\cite{AlfNeaSch96a, AlfNeaSch96b, AlfNeaSch96c} 
and introduce our functional spaces on the unit 
sphere 
$\mS \subset \mathbb{R}^3$. Then the 
quasi--interpolation operator and the 
hypersingular integral equation will be discussed.

\subsection{Spherical splines}

The {\it trihedron} 
$T$ generated by three linearly independent vectors $\{\vecv_1, \vecv_2, 
\vecv_3\}$  in $\R^3$
is defined by
\[
T
=
\{
\vecv\in \R^3: \vecv = b_1\vecv_1 + b_2 \vecv_2 + 
b_3 \vecv_3\ {\rm with}\ b_i\ge 0,
\ i = 1,2,3
\}.
\]
The intersection $\tau= T\cap \mS$ is called a 
{\it spherical triangle}.
Let $\De = \{\tau_i: i = 1,\ldots, \mathcal{T}\}$ 
be a set of spherical 
triangles. 
Then $\De$ is called a {\it spherical 
triangulation} of the sphere $\mS$
if there hold
\begin{itemize}
\item[(i)] 
$\bigcup_{i=1}^{\mathcal T} \tau_i = \mS$,
\item[(ii)] 
each pair of {distinct} triangles in $\De$ are 
either disjoint 
or share a common vertex or an edge.
\end{itemize} 
% Figure~\ref{f:uniform4098} shows an example of spherical triangulation that is 
% a uniform spherical triangulation with 4098 vertices.
Let $\Pi_d$ denote the space of 
trivariate 
homogeneous polynomials of degree $d$ in $\R^3$. 
The space of restrictions on the unit sphere 
$\mS$ of all polynomials in $\Pi_d$ is denoted by 
$\Pi_d(\mS)$. Similarly, we also denote by 
$\cP_d$ and $\cP_d(\mS)$ the spaces of 
polynomials of degree $d$
in $\R^3$ and on $\mS$, respectively.
We define $S_d^r(\De)$ to be the space of 
{piecewise} homogeneous splines 
of degree $d$ and smoothness~$r$ on a spherical 
triangulation $\Delta$, that is,
\begin{equation*}
S_d^r(\De) = \{ s \in C^r(\mS) : s|_{\tau} \in 
\Pi_d, \tau \in \Delta\}.
\end{equation*}
Throughout this paper, we always assume that 
\begin{equation}\label{dr cond}
\begin{cases}
d \ge 3r + 2
&
\text{if } r\ge 1
\\
d \ge 1 
&
\text{if } r = 0
\end{cases}
\end{equation}
holds;
see \cite{AlfNeaSch96a, AlfNeaSch96b, 
AlfNeaSch96c}. 

For a spherical triangle $\tau$ with vertices 
${\vecv}_1, {\vecv}_2$, and ${\vecv}_3$, let 
$b_{1,\tau}({\vecv}), b_{2,\tau}({\vecv})$, and 
$b_{3,\tau}({\vecv})$ denote the spherical 
barycentric coordinates as functions of ${\vecv}$ 
in $\tau$, i.e.,
\begin{equation}\label{b1tau def}
\vecv 
=
b_{1,\tau}({\vecv})\vecv_1 +
b_{2,\tau}({\vecv})\vecv_2
+
b_{3,\tau}({\vecv})\vecv_3.
\end{equation}
{
Suppose that $\vv_i = (v_i^x, v_i^y, v_i^z)$ for $i=1,2,3$ and $\vv = (v^x, 
v^y, v^z)$. Equation~\eqref{b1tau def} defining the coordinates $b_{i,\tau}$, 
for 
$i=1,2,3$, can be written as a system of three linear equations
\begin{equation*}\label{b123 mat eqn}
\begin{pmatrix}
v_1^x & v_2^x & v_3^x
\\
v_1^y & v_2^y & v_3^y
\\
v_1^z & v_2^z & v_3^z
\end{pmatrix}
\begin{pmatrix}
b_{1,\tau}
\\
b_{2,\tau}
\\
b_{3,\tau}
\end{pmatrix}
=
\begin{pmatrix}
v^x\\ v^y \\ v^z
\end{pmatrix}.
\end{equation*}
Using Cramer's rule, we have 
\begin{align}
b_{1,\tau}(\vv) 
=
\frac{\dett (\vv, \vv_2, \vv_3)}{\dett(\vv_1,\vv_2,\vv_3)},
\quad 
b_{2,\tau}(\vv) 
=
\frac{\dett (\vv_1, \vv, \vv_3)}{\dett(\vv_1,\vv_2,\vv_3)},
\quad 
b_{3,\tau}(\vv) 
=
\frac{\dett (\vv_1, \vv_2, \vv)}{\dett(\vv_1,\vv_2,\vv_3)},
\label{b1v deter}
\end{align}
where 
\[
\dett(\vv_1,\vv_2,\vv_3)
:=
\dett
\begin{pmatrix}
v_1^x & v_2^x & v_3^x
\\
v_1^y & v_2^y & v_3^y
\\
v_1^z & v_2^z & v_3^z
\end{pmatrix}.
\]
}

We define the homogeneous 
Bernstein basis polynomials of degree $d$ 
relative to 
$\tau$ to be the polynomials
\begin{equation}\label{Berns1}
B_{ijk}^{d,\tau}({\vecv}) = 
\frac{d!}{i!j!k!}b_{1,\tau}({\vecv})^ib_{2,\tau}({
\vecv})^jb_{3,\tau}({\vecv})^k, \quad i + j + k = 
d.
\end{equation}
As was shown in \cite{AlfNeaSch96a}, we can use 
these polynomials
as a basis for $\Pi_d$.

A spherical cap centred at ${\vecx} \in \mS$ and 
having radius $R$ is defined by
\begin{equation}\label{Radius}
 C({\vecx}, R) = \{{\vecy} \in \mS : 
\cos^{-1}({\vecx} \cdot {\vecy}) \leq R\}.
\end{equation}
\noindent For any spherical triangle $\tau$, let 
$|\tau|$ denote the diameter of the smallest 
spherical cap containing $\tau$, and 
$\rho_{\tau}$ 
denote the diameter of the largest spherical cap 
contained in $\tau$. 
We define
\begin{equation*}
|\De| = \max\{|\tau|: \tau\in\Delta\} \quad 
\text{and} \quad \rho_{\De} = \min\{\rho_{\tau}: 
\tau \in \Delta\},
\end{equation*}
and refer to $|\De|$ as the mesh size. 
Our triangulations 
are said to be {\it regular} if for some given 
$\beta >1$, there holds
\begin{equation}\label{regular}
\snorm{\tau}{}
\le 
\beta
\rho_\tau
\quad
\forall
\tau\in\De
\end{equation}
and {\it quasi-uniform} if
for some given positive number  $\gamma < 1$, 
there 
holds 
\begin{equation}\label{quasi uniform}
\snorm{\tau}{}
\ge 
\gamma
\snorm{\De}{×}
\quad
\forall
\tau\in\De.
\end{equation}
Roughly speaking, the regularity guarantees the 
smallest angles in 
our triangulations
are sufficiently large so that there are no too 
narrow triangles
and the 
quasi-uniformity guarantees that the sizes of 
triangles in a triangulation 
are not too much different. 
% Triangulations satisfying both~\eqref{regular} 
% and~\eqref{quasi uniform} are said to be 
% \textit{$\beta$-quasi-uniform}.
    
To accompany the results used in~\cite{BarLai05, 
NeaSch04, PhamTranChernov11} we also denote 
\begin{equation}\label{htau tan}
h_\tau 
=
\tan\brac{\abs{\tau}/2}.
\end{equation}
It is obvious that 
\begin{equation}\label{ht rhotau}
\rho_\tau \le 
\abs{\tau}\le 2 h_\tau
\qall 
\tau \in \De.
\end{equation}
Noting~\eqref{regular} and~\eqref{htau tan}, the 
regularity of a set 
of 
triangulations can also be written by 
\begin{equation}\label{regular2}
h_\tau 
\le 
\beta_1
\tan\brac{\frac{\rho_\tau}{2}}
\quad 
\text{or}
\quad 
h_\tau 
\le 
\beta_2\, 
\rho_\tau
\qall 
\tau\in \De
\end{equation}
for some positive numbers $\beta_1$ and $\beta_2$.
For any $\tau\in\De$, we denote by $A_{\tau}$ the area 
of $\tau$. If $\De$ is regular, there holds 
\begin{equation}\label{Atau htau regular}
\beta_3  
h_{\tau}
\le 
A_{\tau}^{1/2}
\le 
\beta_4 
h_{\tau} 
\quad 
\forall 
\tau \in \De,
\end{equation}
for some positive constants
$\beta_3$ and $\beta_4$. 
Similarly,
the quasi-uniformity can be written as
\begin{equation}\label{quasi2}
h_\tau 
\ge 
\ga_1\,
\abs{\De}
\qall 
\tau\in \De.
\end{equation}
For any $\tau\in \De$, we denote $\Om_{\tau}$ to be 
the union of all triangles in $\De$ which share 
with $\tau$ at least a common vertex or a common 
edge.
If the triangulations $\De$ are regular, there 
holds 
\begin{equation}\label{regu local uniform}
h_\tau 
\ge \beta_5 
\abs{\Om_{\tau}}
\qall 
\tau 
\in \De,
\end{equation}
 for some $\beta_5>0$,
see~\cite[Lemma~4.14]{LaiSch07}.
We denote by $h_{\De}$ the mesh size of $\De$, i.e.,
\begin{equation}\label{hX def}
h_{\De} = 
\tan(\abs{\De}/2).
% \quad 
% \text{where}
% \quad 
% \abs{\De}
% =
% \max\sett{ 
% \abs{\tau} : \tau\in \De
% }.
\end{equation}

We denote by $V_{\De}$ the set of all vertices of the spherical triangulation 
$\De$.
Let $\vecv_i\in V_{\De}$. We also denote by $T_{\vecv_i}^{\De}$ 
the set of 
triangles in $\De$ whose one of their vertices is $\vecv_i$.
If $\De$ is regular, the smallest angle in $\De$ is bounded below. This 
suggests that the numbers of spherical triangles which share a common vertex is 
bounded, i.e., there is a positive integer {$L$} (depending only on the 
smallest 
angle of $\De$) such that 
\begin{equation}\label{Tvi bounded}
\textrm{card}\brac{T_{\vecv_i}^{\De}}
\le {L}
\quad 
\forall 
\vecv_i\in V_{\De}.
\end{equation}

\subsection{Sobolev spaces}

For every $s\in \R$, the Sobolev space $H^s(\mS)$ defined on the whole unit 
sphere $\mS$ can be defined by using 
Fourier expansion with spherical harmonics.
A spherical harmonic of order $\ell$ on $\mS$ 
is the restriction to $\mS$ of a 
homogeneous harmonic
polynomial of degree $\ell$ in $\R^3$.
The space of all spherical harmonics of order $\ell$ 
is the eigenspace of the Laplace--Beltrami operator
$\De_{\mathbb S}$ corresponding to the eigenvalue 
$\lambda_\ell = - \ell(\ell + 1)$. The dimension of this space 
being $2\ell+1$,
see e.g.~\cite{Mul66}, 
one may choose for it an orthonormal basis
$\{Y_{\ell,m}\}_{m=-\ell}^\ell$. 
The collection of all the spherical harmonics
$\Ylm$, $m = -\ell,\ldots, \ell$ and $\ell=0,1,\ldots$, forms an orthonormal 
basis for $L_2(\mS)$.
The Sobolev space $H^s(\mS)$ is defined as usual by
\[
H^s(\mS)
:=
\Big{\{}
v\in\cD'(\mS): \slinN (\ell+1)^{2s} |\vhlm|^2 < \infty
\Big{\}},
\]
where $\cD'(\mS)$ is the space of distributions on
$\mS$ and $\vhlm$ are the Fourier coefficients of~$v$,
\begin{align}
\vhlm
=
\int_{\mS}
v(\vecx)\Ylm(\vecx)\; d\sigma_{\vecx}.
\label{Fourier coeff}
\end{align} 
% Here $d\sigma_{\vecx}$ is the element of surface
% area.
The space $H^s(\mS)$ is equipped with the following norm
and inner product: 
\begin{equation}\label{equ:Sob nor}
  \|v\|_{H^s(\mS)} := 
       \left(\sum_{\ell=0}^\infty \sum_{m=-\ell}^\ell
          (\ell+1)^{2s} |\widehat{v}_{\ell,m}|^2 \right)^{1/2}
\end{equation}
and
\[
\inpro{v}{w}_{H^s(\mS)}
:=
\sum_{\ell=0}^\infty \sum_{m=-\ell}^\ell
(\ell+1)^{2s} \widehat{v}_{\ell,m} {\widehat{w}_{\ell,m}}.
\]
When $s=0$ we write $\inpro{\cdot}{\cdot}$ instead of
$\inpro{\cdot}{\cdot}_{H^0(\mS)}$; this is in fact the $L_2$-inner
product.
We note that
\begin{equation}\label{equ:CS s}
|\inpro{v}{w}_{H^s(\mS)}|
\le
\norm{v}{H^s(\mS)} \norm{w}{H^s(\mS)}
\quad\forall v,w \in H^s(\mS),\ \forall s \in\R,
\end{equation}
and
\begin{equation}\label{equ:neg nor}
\norm{v}{H^{s_1}(\mS)}
=
\sup_{w \in H^{s_2}(\mS) \atop w \not= 0}
\frac{\inpro{v}{w}_{H^{\frac{s_1+s_2}{2}}(\mS)}}{\norm{w}{H^{s_2}(\mS)}}
\quad\forall v \in H^{s_1}(\mS),\ \forall s_1, s_2 \in\R.
\end{equation}
In particular, there holds
\begin{equation}\label{dual1}
\norm{v}{H^{-s}(\mS)}
=
\sup_{
w\in H^{s}(\mS)\atop 
w\not= 0}
\frac{\inpro{v}{w}}{
\norm{w}{H^{s}(\mS)}
}.
\end{equation}

In the case $k$ belongs to the set of nonnegative integers $\Z^+$, the Sobolev
space $H^k(\Omega)$ on a subset $\Omega\subset \mS$ can be defined by
using {an atlas} for the unit sphere $\mS$ \cite{NeaSch04}.
Let $\{(\Gamma_j, \phi_j)\}_{j = 1}^J$ be an atlas for $\Omega$, i.e,
a finite collection of charts $(\Gamma_j, \phi_j)$, where $\Gamma_j$ are
open subsets of $\Omega$, covering $\Omega$, and
where $\phi_j : \Gamma_j \rightarrow B_j$ are infinitely
differentiable mappings
 whose inverses $\phi_j^{-1}$ are also infinitely differentiable.
Here $B_j$, $j = 1,\ldots, J$, are open subsets in $\R^2$.
Also, let $\{\psi_j\}_{j=1}^J$ be a partition \label{pag:par} of unity 
subordinate to the atlas
$\{(\Gamma_j, \phi_j)\}_{j=1}^J$, i.e., a set of infinitely differentiable
functions $\al_j$ on $\Omega$ vanishing outside the sets $\Gamma_j$, such that
$\sum_{j=1}^J \psi_j = 1$ on $\Omega$.
For any $k\in\Z^+$,
 the Sobolev space $H^k(\Omega)$ on the unit sphere is defined as follows
\begin{equation}\label{equ:Sob spa}
H^k(\Omega)
:=
\{v: (\psi_j v)\circ \phi_j^{-1}\in H^k(B_j),\ j=1,\ldots, J \},
\end{equation}
which is equipped with a norm defined by
\begin{equation}\label{equ:nor 2}
\norm{v}{H^k(\Omega)}^*
:=
\sum_{j = 1}^J
\norm{(\psi_j v)\circ \phi_j^{-1}}{H^k(B_j)}.
\end{equation}
Here, $\norm{\cdot}{H^k(B_j)}$ denotes the usual $H^k$-Sobolev norm defined on 
the subset $B_j$ of the plane $\R^2$.
In the case $\Omega = \mS$,
this norm is equivalent to the norm defined in \eqref{equ:Sob nor};
see~\cite{LioMag}.

To accompany the results used in~\cite{BarLai05, 
NeaSch04, PhamTranChernov11}, we also present here a definition of 
Sobolev spaces defined on a subset of $\mS$ by using homogeneous extensions of 
a function defined on $\mS$.
Let $\ell\in \N$ and let $v$ be a function defined on the unit sphere $\mS$. We 
denote by $v_\ell$ the homogeneous extension of degree $\ell$ of $v$ to $\R^3$, 
i.e.,
\begin{align}
v_\ell(\vecx) 
:=
\abs{\vecx}^\ell
v
\brac{ 
\frac{\vecx}{\abs{\vecx}}
},
\quad 
\vecx\in \R^3{\setminus}\sett{\veczero}.
\notag
\end{align}
For every $v\in H^k(\Omega)$, 
we define Sobolev--type seminorms of $v$ by
\begin{equation}\label{equ:sno}
\snorm{v}{H^\ell(\Omega)}
:=
\sum_{|\al|=\ell}
\norm{D^\al v_{\ell-1}}{L_2(\Omega)},
\quad 
\ell=1,\ldots, k.
\end{equation}
Here
$\norm{D^\al v_{\ell-1}}{L_2(\Omega)}$ is understood as the $L_2$-norm
of the restriction of the trivariate function $D^\al v_{\ell-1}$ to $\Omega$.
When $\ell=0$ we define
\[
\snorm{v}{H^0(\Omega)}
:=
\norm{v}{L_2(\Omega)},
\]
which can now be used together with \eqref{equ:sno} to define
a norm in $H^k(\Omega)$:
\begin{equation}\label{equ:nor 3}
\norm{v}{H^k(\Omega)}'
:=
\sum_{\ell=0}^k \snorm{v}{H^\ell(\Omega)}.
\end{equation}
 This norm is equivalent to the norm $\norm{\cdot}{H^k(\Omega)}^*$ defined
 by~\eqref{equ:nor 2}; see \cite{NeaSch04}.

For every $s\in [0,1]$, 
the spaces $\wtd H^s(\Om)$ and $H^s(\Omega)$ 
are defined by Hilbert 
space interpolation~\cite{BerLof} so that
\begin{equation}\label{equ:H 1 over 2}
\wtd H^s(\Omega)
:=
[L_2(\Omega), H_0^1(\Omega)]_s,
\quad 
\text{and}
\quad
H^s(\Omega)
:=
[L_2(\Omega), H^1(\Omega)]_s,
\end{equation}
where 
{$H_0^1(\Om)=\sett{v\in H^1(\Om): v = 0\ \text{on } \partial\Om}$,} and 
$[X_0, X_1]_s$ denotes the  $L_2$-interpolation 
of $X_0$ and $X_1$, see e.g.~\cite{BerLof, McL00}.
Here, $H_0^1(\mS)$ is
the space of all functions in $H^1(\mS)$ which vanish on the boundary $\partial 
\Om$ of $\Om$, i.e.,
\[
H_0^1(\Omega) = \{v\in H^1(\Omega): v = 0\ 
\text{on}\
\partial\Omega\}.
\]
The spaces $H^{-s}(\Omega)$ and $\wtd 
H^{-s}(\Omega)$ are
defined as the dual spaces of $\wtd 
H^{s}(\Omega)$ and
$H^{s}(\Omega)$, respectively, with respect to 
the duality pairing
which is the usual extension of the $L_2$-inner 
product on $\Omega$.
In particular,
the space $H^{-s}(\mS)$ is defined to be the dual 
space of $H^{s}(\mS)$. 
The $\norm{\cdot}{H^s(\mS)}$-norm defined by~\eqref{equ:Sob nor} turns out to 
be equivalent 
to the $\norm{\cdot}{H^s(\mS)}'$-norm defined by~\eqref{equ:nor 
2},~\eqref{equ:H 
1 over 2} 
and~\eqref{dual1} when $\Om = \mS$ and $-1\le s\le1$, i.e.,
\begin{align}
\ga_2 \norm{v}{H^s(\mS)}
\le 
\norm{v}{H^s(\mS)}'
\le 
\ga_3
\norm{v}{H^s(\mS)}
\quad 
\forall v\in H^s(\mS),
\label{norms euiv}
\end{align}
for some positive numbers $\ga_2$ and $\ga_3$, {see e.g.~\cite{Heu14, 
LioMag, 
NeaSch04, Ned00}.}

\subsection{Quasi-Interpolation}\label{s:Q def}

We now briefly discuss the construction of a 
quasi-interpolation operator $Q: 
L_2(\mS)\goto S_d^r(\De)$ which is defined in 
\cite{NeaSch04}. Firstly, we introduce the set of 
\textit{domain points} of $\De$ to be
\[
\cD 
=
\bigcup_{\tau=\langle\vv_1,\vv_2,
\vv_3\rangle\in\De}
\sett{\xi_{ijk}^\tau=\frac{i\vv_1+j\vv_2+k\vv_3}{d
}}_{i+j+k=d}
\]
Here, $\tau=\langle\vv_1,\vv_2,\vv_3\rangle$ 
denotes the spherical triangle whose vertices are 
$\vv_1,\vv_2,\vv_3$. We denote the domain points 
by $\xi_1, \ldots, \xi_D$, where $D 
= {\dimm}\, S_d^0(\De)$. 
Let $\sett{B_\ell:\ell=1,\ldots,D}$ be 
a basis for $S_d^0(\De)$ such that the 
restriction 
of $B_\ell$ on the triangle containing 
$\xi_\ell$ is $Bernstein$ %-$Br\acute{e}zier$ 
polynomial of degree $d$ associated with this 
point, and that $B_\ell$ vanishes on other 
triangles.

A set $\cM=\sett{\zeta_\ell}_{\ell=1}^{M}\subset 
\cD$ is called a \textit{minimal determining} set 
for $S_d^r(\De)$ if, for every $s\in 
S_d^r(\De)$, 
all the coefficients $\nu_\ell(s)$ in the 
expression $s=\sum_{\ell=1}^D \nu_\ell(s)B_\ell$ 
are uniquely determined by the coefficients 
corresponding to the basis functions which are 
associated with points in $\cM$. Given a minimal 
determining set, we construct a basis 
$\sett{B^*_\ell}_{\ell=1}^M$ for $S_d^r(\De)$ 
by 
requiring
\begin{align}\label{nu def}
\nu_{\ell'(B_\ell^*)}=\delta_{\ell,\ell'}, \quad 
	1\leq \ell,\ell'\leq M.
\end{align}
By using Hahn-Banach theorem we extend the linear 
functions $\nu_\ell$, $\ell=1,\ldots,M$, to all 
of $L_2(\mS)$. We continue to use the same symbol 
for the extensions.
The quasi-interpolation operator: $Q: L_2(\mS) 
\goto S_d^r(\De)$ is now defined by
\begin{equation}\label{Qv def}
Qv=\sum_{\ell=1}^M \nu_\ell(v) B_\ell^*, \quad 
v\in L_2(\mS).
\end{equation}

\subsection{The hypersingular integral equation}

The hypersingular integral operator~\eqref{equ:hyper define} arises from the 
boundary-integral reformulation of the Neumann problem with the Laplacian in 
the interior or the exterior of the sphere, see~\cite{Sve83}. This operator 
(with minus sign) turns out to be a strongly elliptic pseudodifferential 
operator of order $1$, see e.g.~\cite{Sve83, PhamTranChernov11}, i.e.
\begin{align}
-N v(\vecx)
&
=
\slinN
\frac{\ell(\ell+1)}{2\ell+1}
\vhlm 
\Ylm(\vecx),
\quad 
\vecx\in \mS.
\label{hyp def series}
\end{align}
In this paper, we solve
the hypersingular integral 
equation~\eqref{eqn:Heqn},
\begin{align}
-Nu + \om^2 \int_{\mS} u\,d\sigma 
=
f
\quad 
\text{on}
\quad 
\mS,
\label{N u om f S}
\end{align}
where $f\in H^{-1/2}(\mS)$. 
We denote by $\cN^*: H^s(\mS) \goto H^{s-1}(\mS)$ the operator which is given by
\begin{align}
\cN^* v = -Nv + \om^2 \int_{\mS} 
v\,d\sigma,
\quad 
v\in H^s(\mS).
\label{N star def}
\end{align}
Noting~\eqref{equ:Sob nor}, \eqref{hyp def series} and~\eqref{N star def}, we 
have 
\begin{align}
\norm{\cN^* v}{H^{s-1}(\mS)}^2
&
=
\sum_{\ell=1}^\infty 
\sum_{m=-\ell}^\ell 
(\ell+1)^{2s} 
\frac{\ell^2}{(2\ell+1)^2}
\abs{\vhlm}^2 
+
4\pi \om^4 
\abs{\wth v_{0,0}}^2.
\label{N star norm sm1}
\end{align}
For every $\ell\ge 1$, there holds
\[
\frac{1}{9}
\le 
\frac{\ell^2}{(2\ell+1)^2}
\le 
\frac{1}{4}.
\]
This together with~\eqref{equ:Sob nor} and~\eqref{N star norm sm1} implies 
\begin{equation}\label{Nv norm v sm1}
\al_1
\norm{v}{H^s(\mS)}
\le 
\norm{\cN^* v}{H^{s-1}(\mS)}
\le 
\al_2
\norm{v}{H^s(\mS)}
\quad 
\forall 
v\in H^s(\mS),
\end{equation}
where 
\begin{align}
\al_1 
&
=
\min
\sett{\frac{1}{3}, 2\om^2\sqrt{\pi}}
\quad 
\text{and}
\quad 
\al_2 
=
\max
\sett{\frac{1}{2}, 2\om^2\sqrt{\pi}}.
\label{al1 2 def}
\end{align}
To set up a weak formulation, we introduce the 
bilinear form
\begin{equation}\label{auv def}
a(u, v) 
:= 
\inprod{\cN^* u}{v},
% - \inprod{N u}{v} + \omega^2
% \inprod{u}{1}\inprod{v}{1}, 
\quad u,v\in 
H^{1/2}(\mS),
\end{equation}
where 
$\inpro{v}{w}$ 
is the $H^{1/2}(\mS)$-duality pairing
which coincides with the $L_2(\mS)$-inner product when $v$ and $w$ belong to 
$L_2(\mS)$.
This bilinear form is clearly bounded and 
coercive, i.e., 
\begin{gather}
a(u,v) 
\le 
\al_2
\norm{u}{H^{1/2}(\mS)}
\norm{v}{H^{1/2}(\mS)}
\qall 
u,v\in H^{1/2}(\mS),
\label{auv cont}
\end{gather}
and 
\begin{gather}
\al_1
\norm{v}{H^{1/2}(\mS)}^2
\le
a(v,v) 
\quad
\forall
v\in H^{1/2}(\mS),
\label{equ:bi li nor}
\end{gather}
respectively. 
A natural weak formulation of 
equation~\eqref{eqn:Heqn} is:
Find $u\in H^{1/2}(\mS)$ satisfying
\begin{equation}\label{wsol1}
a(u,v) = 
\inpro{f}{v}
\qall 
v\in H^{1/2}(\mS).
\end{equation}
Let $\De$ be a spherical 
triangulation on $\mS$. 
We denote by 
$u_{\De}\in S_d^r(\De)$ the Galerkin solution 
\begin{equation}\label{GalSol1}
a(u_{\De}, v) 
=
\inprod{f}{v}
\qall 
v\in S_d^r(\De).
\end{equation}
The unique existences  of $u$ and $u_{\De}$ 
are guaranteed by the Lax--Milgram Theorem, 
noting the boundedness~\eqref{auv cont} and the
coercivity~\eqref{equ:bi li nor} of the 
bilinear form $a(\cdot,\cdot)$. Furthermore, 
if $\De$ is a {regular and quasi-uniform 
triangulation} and if $u\in H^s(\mS)$ for some 
$1/2\le s\le d+1$, then there holds
\begin{align}
\norm{u - u_{\De}}{H^{1/2}(\mS)}
\le 
\al_3\, 
h_{\De}^{s-1/2}
\norm{u}{H^s(\mS)},
\label{priori1}
\end{align}
see~\cite{PhamTranChernov11}.
Here,
$h_{\De}$ is the mesh size of $\De$, see~\eqref{hX def}, and
$\al_3$ is a positive constant depending only 
on $d$ and the smallest angle in $\De$.
The a priori error estimate~\eqref{priori1}
reveals the convergence and stability of the 
Galerkin approximation~\eqref{GalSol1}. However, 
 the upper 
bound of the error $\norm{u-u_{\De}}{H^{1/2}(\mS)}$ 
is 
given by the mesh size $h_{\De}$ and the norm 
$\norm{u}{H^s(\mS)}$ of the exact solution $u$ 
which is unknown. 
Furthermore, the quasi-uniform requirement means that one has to divide all 
spherical triangles in the current mesh whenever better accuracy is demanded.
In 
the next section, we prove a residual upper bound for the 
error $\norm{u-u_{\De}}{H^{1/2}(\mS)}$ in terms of the given right hand side 
$f$ and 
the approximate solutions $u_{\De}$ of the 
corresponding discrete 
problems. 

\section{A posteriori residual error 
estimate}\label{s:Error}

In this section, the error 
$\norm{u-u_{\De}}{H^s(\mS)}$ 
will be bounded above by an
a
posteriori residual error estimator. 
We assume that $f\in L_2(\mS)$. 
Since $S_d^r(\De)\subset H^{r+1}(\mS)$ (see~\cite{PhamTranChernov11}),
for each $u_{\De}\in S_d^r(\De)$, we have 
$\cN^* u_{\De}\in H^r(\mS) \subset L_2(\mS)$. The
\textit{residual} 
$\mathcal{R}(u_{\De})\in L_2(\mS)$
is defined by
\begin{align}\label{eq5}
\cR(u_{\De}) = f 
- 
\cN^* u_{\De}
\in L_2(\mS).
\end{align}
This together with~\eqref{auv def} and~\eqref{wsol1} gives
\begin{align}
\inprod{\mathcal{R}(u_{\De})}{v}  
&
=
\inprod{f}{v}
- 
a\left(u_{\De},v\right)=a(u-u_{\De},v)
\quad 
\forall v\in H^{1/2}(\mS).
\label{Ra u}
\end{align} 
It is obvious from~\eqref{eq5} that
the residual $\cR(u_{\De})$ depends solely on the 
source 
term $f$ and the 
discrete solution $u_{\De}$.  The following lemma 
states the equivalence of the error 
$\norm{u-u_{\De}}{H^{1/2}(\mS)}$ and the $H^{-1/2}(\mS)$-norm of the residual 
$\cR(u_{\De})$.

\begin{lemma}
Let 
$u$ and $u_{\De}$ be the weak and approximate 
solutions defined by~\eqref{wsol1} 
and~\eqref{GalSol1}, respectively.
There holds 
\begin{align*}
\al_1 
\norm{u-u_{\De}}{H^{1/2}(\mS)} 
\leq 
\norm{\mathcal{R}(u_{\De})}{H^{-1/2}(\mS)} 
\leq 
\al_2 
\norm{u-u_{\De}}{H^{1/2}(\mS)},
\end{align*} 
where $\al_1$ and $\al_2$ are the coercivity and boundedness constants, 
see~\eqref{equ:bi li nor} and~\eqref{auv cont}, respectively.
\end{lemma}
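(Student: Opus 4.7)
The plan is to read off both bounds directly from the residual identity~\eqref{Ra u} combined with the duality characterization of the $H^{-1/2}(\mS)$-norm in~\eqref{dual1}. Since $\cR(u_\De)\in L_2(\mS)\subset H^{-1/2}(\mS)$, I would start by writing
\[
\norm{\cR(u_\De)}{H^{-1/2}(\mS)}
=
\sup_{v\in H^{1/2}(\mS)\atop v\ne 0}
\frac{\inpro{\cR(u_\De)}{v}}{\norm{v}{H^{1/2}(\mS)}}
=
\sup_{v\in H^{1/2}(\mS)\atop v\ne 0}
\frac{a(u-u_\De,v)}{\norm{v}{H^{1/2}(\mS)}},
\]
the second equality being exactly~\eqref{Ra u}.

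For the upper bound, I would apply the boundedness~\eqref{auv cont} of $a(\cdot,\cdot)$ to the numerator, which gives $a(u-u_\De,v)\le \al_2\norm{u-u_\De}{H^{1/2}(\mS)}\norm{v}{H^{1/2}(\mS)}$; cancelling $\norm{v}{H^{1/2}(\mS)}$ and taking the supremum immediately yields $\norm{\cR(u_\De)}{H^{-1/2}(\mS)}\le \al_2\norm{u-u_\De}{H^{1/2}(\mS)}$.

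For the lower bound, the trick is just to pick a single clever test function rather than take the supremum: choosing $v=u-u_\De\in H^{1/2}(\mS)$ in the displayed identity and invoking coercivity~\eqref{equ:bi li nor} gives
\[
\norm{\cR(u_\De)}{H^{-1/2}(\mS)}
\ge
\frac{a(u-u_\De,u-u_\De)}{\norm{u-u_\De}{H^{1/2}(\mS)}}
\ge
\al_1\norm{u-u_\De}{H^{1/2}(\mS)},
\]
which is the claimed estimate (with the trivial case $u=u_\De$ being vacuous). There is essentially no obstacle here beyond handling the degenerate case $u=u_\De$ separately; the whole argument is a standard boundedness/coercivity sandwich applied to the residual, so the proof should be short.
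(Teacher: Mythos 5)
Your proposal is correct and follows essentially the same route as the paper: the duality characterization \eqref{dual1} combined with \eqref{Ra u}, boundedness \eqref{auv cont} for the upper bound, and coercivity \eqref{equ:bi li nor} tested against $v=u-u_{\De}$ for the lower bound. Your explicit remark about the degenerate case $u=u_{\De}$ is a minor tidiness point the paper leaves implicit, but the argument is the same.
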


\begin{proof}
Noting~\eqref{dual1}, we have
\begin{align}
\left\| \mathcal{R}(u_{\De}) \right\|_{H^{-1/2}(\mS)} 
= 
\sup_{v \in  H^{1/2}(\mS)\atop v\not= 0} 
\frac{\left<\mathcal{R}(u_{\De}),v  
\right>}{\left\|v\right\|_{H^{1/2}(\mS)}}.
\label{RuX H norm}
\end{align}
It follows from the coercivity~\eqref{equ:bi li nor} of the 
bilinear form $a(\cdot, \cdot)$ and~\eqref{Ra u} that
\begin{align*}
\al_1\norm{u- u_{\De}}{H^{1/2}(\mS)}^2
&
\le 
a(u- u_{\De}, u-u_{\De})
=
\inpro{\cR(u_{\De})}{u - u_{\De}}.
\end{align*}
This together with~\eqref{RuX H norm} implies 
\begin{align*}
\al_1
\norm{u- u_{\De}}{H^{1/2}(\mS)}
&
\le 
\frac{ 
\inpro{\cR(u_{\De})}{u - u_{\De}}
}{
\norm{u- u_{\De}}{H^{1/2}(\mS)}
}
\leq 
\sup_{
v\in H^{1/2}(\mS)
\atop 
v\not= 0
}
\frac{ 
\inpro{\cR(u_{\De})}{v}
}{
\norm{v}{H^{1/2}(\mS)}
}
=
\norm{\cR(u_{\De})}{H^{-1/2}(\mS)}.
\end{align*}
On the 
other hand, 
we derive
\begin{align*}
\inpro{\cR(u_{\De})}{v}
&
=
a(u-u_{\De}, v)
\le
\al_2
\norm{u- u_{\De}}{H^{1/2}(\mS)}
\norm{v}{H^{1/2}(\mS)}
\qall 
v\in H^{1/2}(\mS),
\end{align*}
noting~\eqref{Ra u} and the 
continuity~\eqref{auv cont} of the bilinear form 
$a(\cdot,\cdot)$.
This implies
\begin{align*}
\norm{\cR(u_{\De})}{H^{-1/2}(\mS)}
&
=
\sup_{
v\in H^{1/2}(\mS)
\atop 
v\not= 0
}
\frac{ 
\inpro{\cR(u_{\De})}{v}
}{
\norm{v}{H^{1/2}(\mS)}
}
\le
\al_2
\norm{u- u_{\De}}{H^{1/2}(\mS)},
\end{align*}
finishing the proof of the lemma.
\end{proof}

For each $\tau\in \De$,
we define  the
\textit{spherical triangle 
residual} by
\begin{align}
\cR_\tau (u_{\De})
&
= 
\left(
f
-
\cN^* u_{\De}
\right)\big|_{\tau},
\label{Rtau def}
\end{align}
and the
\textit{local 
error estimator} $\eta_{\De,s}(\tau)$ by
\begin{align}
\eta_{\De,s}(\tau)
= 
h_\tau^{1-s} 
\left\| 
\cR_\tau(u_{\De}) 
\right\|_{L_2(\tau)},
\label{etaXtau}
\end{align}
where $0< s< 1$. 
{The residual estimators 
were used for solving the hypersingular integral equation 
with flat triangular elements, see~\cite{Carstensen2004}. In this paper, the 
local error estimators are defined on spherical triangles.
}
Note here that $f$ and $\cN^* u_{\De}$ belong to $L_2(\mS)$. It follows 
from~\eqref{eq5} and~\eqref{Rtau def} that for any $v\in 
H^{1/2}(\mS)$,  
there holds
\begin{align}
\inprod{\mathcal{R}(u_{\De})}{v}
&
= 
\int_{\mS} 
\left(
f
-
\cN^* u_{\De}
\right)v\,d\sigma
\notag
\\
&
=
\sum_{\tau\in \De}
\int_{\tau} 
\left(
f
-
\cN^* u_{\De}
\right)v\,d\sigma
\notag
\\
&
=
\sum_{\tau\in \De} 
\int_\tau 
\cR_\tau(u_{\De})\,v\,d\sigma.
\label{RuX tau v}
\end{align}

The following lemma shows an approximation property of the 
quasi--interpolation operator $Q$ (defined in Subsection~\ref{s:Q def}). This 
result extends Theorem~2 in~\cite{BarLai05} in which we relax on the  
quasi-uniform condition of $\De$.

\begin{lemma}\label{p:VQv mk}
Let $m$ be  a positive integer
satisfying 
\begin{equation}\label{m mod d1}
m=\left\{\begin{matrix}
1,3,\ldots,d+1\ \text{if d is even,} 
\\ 
2,4,\ldots,d+1 \ \text{if d is odd.}
\end{matrix}\right.
\end{equation}
Assume that $\De$ is a regular
spherical triangulation 
such that $\abs{\Om_\tau}<1$ for all $\tau\in 
\De$.
Recall that
$Q: L_2(\mS)\goto S_d^r(\De)$ is the 
quasi-interpolation operator defined by~\eqref{Qv 
def}. 
For any $\tau\in \De$, if  
$v \in H^m(\Om_\tau)$, then there holds
\begin{equation}\label{vQv regular1}
\snorm{v-Qv}{H^k(\tau)}
\le 
\al_4\, 
h_\tau^{m-k}
\snorm{v}{H^m(\Om_\tau)}.
\end{equation}
for all $k=0,\ldots,\min\sett{m-1,r+1}$.
Here, $\al_4$ is a positive constant depending 
only on $d$ and the smallest angle in $\De$.

\end{lemma}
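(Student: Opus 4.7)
The plan is to follow the classical three-step pattern used for quasi-interpolation error bounds, but with all estimates localized to the patch $\Om_\tau$ so that shape regularity alone, rather than quasi-uniformity, is what enters the constants. I would (i) exploit local polynomial reproduction of $Q$ to subtract a suitable polynomial, (ii) apply a Bramble--Hilbert estimate on $\Om_\tau$, and (iii) use a local stability bound for $Q$. The parity condition~\eqref{m mod d1} ensures that a spherical homogeneous polynomial of degree $m-1$, after multiplication by a suitable power of $\abs{\vecx}^2=1$, lies in $\Pi_d|_{\mS}$ and hence in $S_d^r(\De)$.

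First, from the construction~\eqref{nu def} of the dual basis and the locality of the functionals $\nu_\ell$ used in~\eqref{Qv def}, one has $Qp|_{\tau}=p|_{\tau}$ for every such polynomial $p$. Writing $v-Qv=(v-p)-Q(v-p)$ on $\tau$ gives
\[
\snorm{v-Qv}{H^k(\tau)}\le \snorm{v-p}{H^k(\tau)}+\snorm{Q(v-p)}{H^k(\tau)}.
\]
Second, applying the Bramble--Hilbert lemma on the patch $\Om_\tau$ (a union of at most ${L}$ spherical triangles by~\eqref{Tvi bounded}), one can choose $p\in\Pi_{m-1}|_{\mS}$ so that
\[
\snorm{v-p}{H^j(\Om_\tau)}\le C\,\abs{\Om_\tau}^{m-j}\snorm{v}{H^m(\Om_\tau)},\quad j=0,\ldots,m.
\]
The decisive point, where the original proof in~\cite{BarLai05} invokes quasi-uniformity, is that~\eqref{regu local uniform} now gives $\abs{\Om_\tau}\le\be_5^{-1}h_\tau$ with a constant depending only on the smallest angle in $\De$. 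This replaces $\abs{\Om_\tau}$ by $h_\tau$ on the right-hand side and, combined with $\tau\subset\Om_\tau$, yields $\snorm{v-p}{H^k(\tau)}\le C\,h_\tau^{m-k}\snorm{v}{H^m(\Om_\tau)}$.

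Third, I would establish the local stability estimate
\[
\snorm{Qw}{H^k(\tau)}\le C\, h_\tau^{-k}\norm{w}{L_2(\Om_\tau)}
\]
by two ingredients: each linear functional $\nu_\ell$ (after the Hahn--Banach extension in~\cite{NeaSch04} is taken to be supported on a single triangle of $\Om_\tau$) is bounded by $C\,h_\tau^{-1}\norm{w}{L_2(\Om_\tau)}$, and the Bernstein-type basis functions $B^*_\ell$ satisfy the inverse estimate $\snorm{B^*_\ell}{H^k(\tau)}\le C\,h_\tau^{1-k}$. Both constants are shown to depend only on $d$ and the smallest angle by a scaling argument pulling back to a reference trihedral angle, together with~\eqref{regu local uniform} to guarantee that all triangles in $\Om_\tau$ have diameter comparable to $h_\tau$. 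Applying this stability with $w=v-p$ and the $j=0$ case of the Bramble--Hilbert estimate then gives $\snorm{Q(v-p)}{H^k(\tau)}\le C\,h_\tau^{m-k}\snorm{v}{H^m(\Om_\tau)}$, which together with the triangle-inequality step finishes the proof.

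The main obstacle will be step (iii): verifying that the constants in the bounds on $\nu_\ell$ and in the inverse estimates depend only on the regularity parameter $\be$ and not on any hidden global mesh ratio. This is precisely the place where the earlier proof in~\cite{BarLai05} relied on quasi-uniformity; here the required local comparability of diameters of neighboring triangles in $\Om_\tau$ is supplied by~\eqref{regu local uniform}, extracted from~\cite[Lemma~4.14]{LaiSch07}, so the argument can be carried through patch-by-patch with constants uniform over $\tau\in\De$.
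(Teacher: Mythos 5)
Your proposal follows essentially the same route as the paper's proof: subtract a degree-$(m-1)$ spherical polynomial (lifted into $\Pi_d(\mS)$ via the parity condition), apply the patchwise Bramble--Hilbert estimate of Theorem~4.2 in~\cite{NeaSch04}, use reproduction and local $L_2$-stability of $Q$, and convert $\diam(\Om_\tau)$ and $\tan(\rho_\tau/2)^{-1}$ into $h_\tau$ via the regularity bounds~\eqref{regular2} and~\eqref{regu local uniform}. The only difference is that what you flag as the main obstacle (the local stability bound $\snorm{Qw}{H^k(\tau)}\le C\,h_\tau^{-k}\norm{w}{L_2(\Om_\tau)}$ with constants depending only on $d$ and the smallest angle) is not re-derived in the paper but taken directly from Lemma~9 of~\cite{NeaSch04}, so your argument is correct and, modulo that citation, coincides with the paper's.
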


\begin{proof}
Note here that for any $m$ satisfying~\eqref{m 
mod 
d1}, we have $d-(m-1)$ is an even number, and 
thus $\snorm{\vecx}{}^{d-(m-1)}$, for $\vecx = 
(x_1, x_2, x_3)$, is a homogeneous polynomial of 
degree $d-(m-1)$. Furthermore, for any $\vecx\in 
\mS$, we have $\snorm{\vecx}{}^{d-(m-1)} = 1$, 
and thus if $s\in \Pi_{m-1}(\mS)$, then
\[
s = s\snorm{\vecx}{}^{d-(m-1)}\in \Pi_d(\mS).
\]

By 
Theorem~4.2 in~\cite{NeaSch04}, for any $v\in H^m(\Om_\tau)$, 
there 
exists a spherical homogeneous polynomial $s\in 
\Pi_{m-1}(\mS)\subset \Pi_d(\mS)$ such that 
\begin{align}
\snorm{v- s}{H^k(\Om_\tau)}
&
\le
C_1\,
\diam(\Om_\tau)^{m-k}
\snorm{v}{H^m(\Om_\tau)}.
\label{vsmk1}
\end{align}
In particular, when $k=0$ we have 
\begin{equation}\label{k0 vs}
\norm{v-s}{L_2(\Om_\tau)}
\le 
C_1\,
\diam(\Om_\tau)^m 
\snorm{v}{H^m(\Om_\tau)}.
\end{equation}
Since $s$ is a spherical homogeneous 
polynomial of degree $d$ on $\mS$, 
Lemma~9 in~\cite{NeaSch04} assures that
$s= Q s$ 
and 
\begin{align}
\snorm{Q (v-s)}{H^k(\tau)}
&
\le 
C_2
\brac{
\tan\frac{\rho_\tau}{2}
}^{-k}
\norm{v-s}{L_2(\Om_\tau)}.
\label{Qv stau}
\end{align}
This together with~\eqref{k0 vs} implies
\begin{align}
\snorm{Q (v-s)}{H^k(\tau)}
&
\le 
C_1 \,C_2
\brac{
\tan\frac{\rho_\tau}{2}
}^{-k}
\diam(\Om_\tau)^{m}
\snorm{v}{H^m(\Om_\tau)}.
\label{Qvs 2Om}
\end{align}
Since $s = Qs$, by
using the triangle inequality and
noting~\eqref{vsmk1},~\eqref{Qvs 2Om}, we 
obtain 
\begin{align}
\snorm{v- Qv}{H^k(\tau)}
&
\le 
\snorm{v- s}{H^k(\tau)}
+
\snorm{Q(v-s)}{H^k(\tau)}
\notag
\\
&
\le
\snorm{v- s}{H^k(\Om_\tau)}
+
\snorm{Q(v-s)}{H^k(\tau)}
\notag
\\
&
\le 
C_1\,
\diam(\Om_\tau)^{m-k} 
\snorm{v}{H^m(\Om_\tau)}
\notag 
\\
&
\quad
+
C_1\,C_2 
\brac{
\tan\frac{\rho_\tau}{2}
}^{-k}
\diam(\Om_\tau)^{m}
\snorm{v}{H^m(\Om_\tau)}.
\label{vQv 12}
\end{align}
Since $\De$ is regular, the 
inequality~\eqref{vQv regular1}
is derived from~\eqref{vQv 12} and 
noting~\eqref{regular2} and~\eqref{regu local 
uniform}.

\end{proof}

The inequality~\eqref{vQv regular1} in Lemma~\ref{p:VQv mk} holds for any 
integer $m$ satisfying~\eqref{m mod d1}. In the following lemma, the inequality 
is proved when $k=0$ and $m$ is a real number between~$0$ and $1$.

\begin{lemma}\label{l:uQu1}
Let $\De$ be  a 
regular spherical triangulation such that 
$\abs{\Om_\tau}<1$ for all $\tau\in 
\De$,
and let 
$Q:L_2(\mS)\goto S_d^r(\De)$ be the 
quasi-interpolation operator defined 
by~\eqref{Qv def}.
For any 
$v\in H^{s}(\mS)$ where $0 \leq s\leq 1$, there holds
\begin{equation}\label{vQv1}
\norm{v- Q v}{L_2(\tau)}
\leq 
\al_5\,
h_\tau^{s} 
\norm{v}{H^{s}(\Om_\tau)}',
\end{equation}
where $\al_5$ is a positive constant
depending only on the smallest angle of triangles 
in $\De$.
\end{lemma}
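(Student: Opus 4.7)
The plan is to bound the linear operator $T_\tau: v \mapsto (v-Qv)|_\tau$ at two integer endpoints and interpolate between them. For the first endpoint, $s=0$, I would establish the local $L_2$-stability
\begin{equation*}
\norm{Qv}{L_2(\tau)} \le C \norm{v}{L_2(\Om_\tau)},
\end{equation*}
which, via the triangle inequality, yields~\eqref{vQv1} at $s=0$. This stability follows from the representation $Qv = \sum_\ell \nu_\ell(v) B_\ell^*$ given by~\eqref{Qv def}: only $O(1)$ basis functions $B_\ell^*$ are supported on $\tau$, uniformly in $\tau$ because $\De$ is regular (cf.~\eqref{Tvi bounded}), and the Hahn--Banach extensions of the dual functionals $\nu_\ell$ to $L_2(\mS)$ can be chosen so that their operator norms are controlled by the local geometry of $\Om_\tau$; see~\cite{NeaSch04}.

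For the second endpoint I would pick the smallest integer $m \ge 1$ admissible in~\eqref{m mod d1}, namely $m=1$ if $d$ is even and $m=2$ if $d$ is odd, and invoke Lemma~\ref{p:VQv mk} with $k=0$ to get
\begin{equation*}
\norm{v-Qv}{L_2(\tau)} \le \al_4\, h_\tau^{m}\, \snorm{v}{H^{m}(\Om_\tau)} \le \al_4\, h_\tau^{m}\, \norm{v}{H^{m}(\Om_\tau)}'.
\end{equation*}
Together, these two bounds exhibit $T_\tau$ as a bounded linear operator $L_2(\Om_\tau) \to L_2(\tau)$ with norm at most $C$, and $H^m(\Om_\tau) \to L_2(\tau)$ with norm at most $C h_\tau^m$.

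Hilbert-space interpolation at parameter $\theta = s/m$, combined with the reiteration identity $[L_2(\Om_\tau), H^m(\Om_\tau)]_{s/m} = H^s(\Om_\tau)$ with equivalent norms (valid for $0 \le s \le 1 \le m$ by the definition~\eqref{equ:H 1 over 2} and the standard theory of~\cite{BerLof}), then gives
\begin{equation*}
\norm{v-Qv}{L_2(\tau)} \le C\, h_\tau^s\, \norm{v}{H^s(\Om_\tau)}',
\end{equation*}
which is precisely~\eqref{vQv1}.

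The principal obstacle I foresee is keeping every implicit constant uniform in $\tau$. Both the norm equivalence between $\norm{\cdot}{H^s(\Om_\tau)}'$ (defined via homogeneous extensions, cf.~\eqref{equ:nor 3} and~\eqref{norms euiv}) and the interpolation norm $[L_2(\Om_\tau), H^m(\Om_\tau)]_{s/m}$, as well as the constants in the two endpoint estimates, must ultimately depend only on $d$ and the smallest angle of $\De$, not on the individual patch. The regularity of $\De$ together with the assumption $\abs{\Om_\tau}<1$ is what forces each $\Om_\tau$ to have uniformly controlled geometry, which should allow all implicit constants to collapse into the single constant $\al_5$.
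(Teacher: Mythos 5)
Your proposal is correct and takes essentially the same route as the paper's proof: the local $L_2$-stability of $Q$ (cited there from Lemma~9 of~\cite{BarLai05}) plus the triangle inequality gives the $s=0$ endpoint, Lemma~\ref{p:VQv mk} with $k=0$ and $m=1$ ($d$ even) or $m=2$ ($d$ odd) gives the second endpoint, and operator interpolation at $\theta=s/m$ (Theorem~B.2 of~\cite{McL00}) yields~\eqref{vQv1}. The only differences are cosmetic: the paper cites the stability bound rather than sketching its proof, and it applies the interpolation theorem directly, leaving implicit the uniform-in-$\tau$ reiteration/norm-equivalence point you rightly flag.
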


\begin{proof}
Using the result in~\cite[Lemma 9]{BarLai05}, we 
have 
\begin{equation*}\label{Qv v norm}
\norm{Qv}{L_2(\tau)}
\le 
C
\norm{v}{L_2(\Om_\tau)},
\end{equation*}
where $C$ is a constant that depends only on the 
smallest angle of $\tau$.
This together with the triangle inequality implies
\begin{align}
\norm{v - Qv}{L_2(\tau)}
\leq 
\norm{v}{L_2(\tau)}
+
\norm{Qv}{L_2(\tau)}
\le 
(1+C)
\norm{v}{L_2(\Om_\tau)}.
\label{QvvL2}
\end{align}
If $d$ is 
even, we apply Lemma~\ref{p:VQv mk} when 
$k=0$ and $m=1$ to obtain
\begin{equation}\label{vQv 1even}
\norm{v- Q v}{L_2(\tau)}
\leq 
\al_4\,
h_\tau 
\snorm{v}{H^1(\Om_\tau)}
\leq 
\al_4\,
h_\tau 
\norm{v}{H^1(\Om_\tau)}'.
\end{equation}
Noting~\eqref{QvvL2}, \eqref{vQv 1even} and 
using~\cite[Theorem B.2]{McL00} (for $\theta = 
s$ where $0 \leq s\leq 1$), we obtain
\begin{align}
\norm{v- Q v}{L_2(\tau)}
&
\leq 
(1+C)^{1-s}\,
\al_4^{s}\,
h_\tau^{s}\,
\norm{v}{H^{s}(\mS)}',
\notag
\end{align}
proving~\eqref{vQv1} when $d$ is even.
We now prove~\eqref{vQv1} when  $d$ 
is odd. 
Applying 
Lemma~\ref{p:VQv mk} when $k=0$ and $m=2$ 
we have
\begin{equation}\label{QvvL2a}
\norm{v - Qv}{L_2(\tau)}
\leq 
\al_4\, 
h_\tau^2
\snorm{v}{H^2(\Om_\tau)}
\leq 
\al_4\,
h_\tau^2
\norm{v}{H^2(\Om_\tau)}'.
\end{equation}
Noting~\eqref{QvvL2}, \eqref{QvvL2a} and  
applying~\cite[Theorem 
B.2]{McL00} (for $\theta = s/2$ where $ 0 \leq s\leq 2$) 
we obtain 
\[
\norm{v - Qv}{L_2(\tau)}
\leq 
(1+C)^{(1-s/2)}\, \al_4^{s/2}\, 
h_\tau^{s}
\norm{v}{H^s(\Om_\tau)}',
\]
completing the proof of the lemma.
\end{proof}

Technical results in the following two lemmas will be used in the proof of 
Theorem~\ref{t:upp bound}.

\begin{lemma}\label{De Om tau card}
Let $\De$ be a regular spherical triangulation on the unit sphere. There 
holds 
\begin{equation}\label{Om tau tau ij}
\cardd
\sett{ 
\tau'\in \De:
\intt\Om_{\tau'}
\cap 
\intt\Om_{\tau} \not= \emptyset
}
\le 
\al_6
\quad 
\forall 
\tau\in \De,
\end{equation}
where $\al_6$ is a positive constant which depends only on the smallest angle 
of $\De$.
\end{lemma}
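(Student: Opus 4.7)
The plan is to reduce the topological condition $\intt\Om_{\tau'}\cap\intt\Om_{\tau}\neq\emptyset$ to a purely combinatorial one about shared vertices, and then bound the count using the regularity of $\De$ via~\eqref{Tvi bounded}.

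First I would argue that if $\intt\Om_{\tau'}\cap\intt\Om_{\tau}\neq\emptyset$, then there must exist a spherical triangle $\sigma\in\De$ with $\sigma\subset\Om_{\tau}\cap\Om_{\tau'}$. Indeed, pick a point $p$ in the intersection; since the intersection is open, it contains a spherical disk, and any such disk must meet the relative interior of some triangle $\sigma\in\De$ (because $\De$ covers $\mS$ and triangles of $\De$ meet only along lower-dimensional sets). Because a point in $\intt\sigma$ belongs to no other triangle, the fact that this point lies in the unions $\Om_{\tau}$ and $\Om_{\tau'}$ forces $\sigma\subset\Om_{\tau}$ and $\sigma\subset\Om_{\tau'}$. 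By the definition of $\Om_\tau$ and $\Om_{\tau'}$, this means $\sigma$ shares at least one vertex with $\tau$ and at least one vertex with $\tau'$.

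Next I would perform the counting. By definition $\Om_{\tau}=\bigcup\{\sigma\in\De:\sigma\text{ shares a vertex with }\tau\}$, and since $\tau$ has exactly three vertices, \eqref{Tvi bounded} yields
\[
\cardd\sett{\sigma\in\De:\sigma\subset\Om_{\tau}}
\le \sum_{\vv\in V_\tau}\cardd\brac{T_{\vv}^{\De}}\le 3L.
\]
Then, for each such $\sigma$, every $\tau'$ that shares a vertex with $\sigma$ is again bounded in number by $3L$ via the same estimate. Combining the two steps gives
\[
\cardd\sett{\tau'\in\De:\intt\Om_{\tau'}\cap\intt\Om_{\tau}\neq\emptyset}
\le \sum_{\sigma\subset\Om_{\tau}}\cardd\sett{\tau'\in\De:\tau'\subset\Om_{\sigma}}
\le (3L)(3L)=9L^2,
\]
so the claimed bound holds with $\al_6=9L^2$, a constant depending only on $L$ and hence only on the smallest angle of $\De$.

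The main (and essentially only) obstacle is the first step, namely the rigorous deduction that an interior overlap $\intt\Om_{\tau'}\cap\intt\Om_{\tau}\neq\emptyset$ forces a common full sub-triangle $\sigma$. The remainder is an elementary combinatorial double-count using~\eqref{Tvi bounded}, which is immediate from the regularity assumption~\eqref{regular}.
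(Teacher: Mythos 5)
Your proof is correct and follows essentially the same route as the paper's: both reduce the overlap condition to the existence of an intermediate triangle $\sigma\subset\Om_{\tau}\cap\Om_{\tau'}$ and then double-count using~\eqref{Tvi bounded} to get the bound $\al_6=9L^2$. In fact you supply a small topological justification (via a disk meeting the relative interior of some triangle) for the existence of $\sigma$ that the paper simply asserts, so your write-up is, if anything, slightly more complete.
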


\begin{proof}
Noting~\eqref{Tvi bounded} 
there holds
\begin{align}
\cardd T_{\vv}^{\De}
\le {L}
\quad 
\forall 
\vv\in V_{\De},
\notag
\end{align}
where $L$ is a positive constant depending only on the smallest angle of 
$\De$. If $\vv_1$, 
$\vv_2$ and $\vv_3$ are the vertices of $\tau$ then 
$\Om_{\tau} = 
\bigcup\sett{\wtd\tau\in T_{\vv_i}^{\De}, i = 1,2,3}$ and thus 
\begin{align}
\cardd
\sett{\wtd\tau\in \De: \tau \subset \Om_{\tau}}
\le 
3{L.}
\label{card tau Om}
\end{align}
Suppose that $\tau'\in \De$ satisfies 
$\intt\Om_{\tau'}
\cap 
\intt\Om_{\tau} \not= \emptyset$. Then, there is a $\wtd\tau\in \De$ such 
that $\wtd\tau\subset \Om_{\tau'}\cap\Om_{\tau}$.
If $\wtd\tau\subset \Om_{\tau'}$ then $\tau'\subset \Om_{\wtd\tau}$. 
For every $\tau\in \De$, there are at most $3L$ 
options of choosing a $\wtd\tau\subset \Om_{\tau}$ by~\eqref{card tau Om}. On 
the other hand, for each $\wtd\tau$ in $\Om_{\tau}$, there are at most $3L$ 
options of choosing  
a $\tau'\subset \Om_{\wtd\tau}$. Thus, there holds
\begin{align}
\cardd
\sett{ 
\tau'\in \De:
\intt\Om_{\tau'}
\cap 
\intt\Om_{\tau} \not= \emptyset
}
\le 
{9L^2}
\quad 
\forall 
\tau\in \De.
\notag
\end{align}
Denoting $\al_6 = {9L^2}$, we obtain the inequality~\eqref{Om tau tau ij}, 
completing the proof of the lemma.
\end{proof}

\begin{lemma}\label{l:coloring}
Let $\De$ be a regular spherical triangulation and 
let $s\in [0,1]$. There exists a positive number $\al_7$ which depends only 
on 
the smallest angle of $\De$ such that 
\begin{align}
\sum_{\tau\in \De}
\norm{v}{H^s(\Om_\tau)}'^2
\le
\al_7 
\norm{v}{H^s(\mS)}^2
\quad 
\forall 
v\in H^s(\mS).
\label{sum norm S tau}
\end{align}
\end{lemma}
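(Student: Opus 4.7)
The plan is to establish the bound first at the two integer endpoints $s=0$ and $s=1$, where it follows directly from the finite-overlap property of the patches $\Om_\tau$ given by Lemma~\ref{De Om tau card}, and then interpolate to obtain the result for all $s\in[0,1]$. Throughout, the decisive geometric input is the observation that for every $\tau'\in\De$, the number of patches $\Om_\tau$ that contain $\tau'$ equals $\cardd\sett{\tau\in\De:\vv\in\tau\text{ for some vertex }\vv\text{ of }\tau'}$, which by the regularity of $\De$ is bounded by a constant depending only on the smallest angle of $\De$ (cf.\ \eqref{Tvi bounded} and the argument in Lemma~\ref{De Om tau card}); call this constant $N_0$.

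For the $s=0$ endpoint, recall that $\norm{v}{H^0(\Om_\tau)}'=\norm{v}{L_2(\Om_\tau)}$. Decomposing and swapping the order of summation,
\begin{equation*}
\sum_{\tau\in\De}\norm{v}{L_2(\Om_\tau)}^2
=\sum_{\tau\in\De}\sum_{\tau'\subset\Om_\tau}\norm{v}{L_2(\tau')}^2
=\sum_{\tau'\in\De}\cardd\sett{\tau\in\De:\tau'\subset\Om_\tau}\,\norm{v}{L_2(\tau')}^2
\le N_0\,\norm{v}{L_2(\mS)}^2.
\end{equation*}
For the $s=1$ endpoint, I would use $\norm{v}{H^1(\Om_\tau)}'^2\le 2\brac{\norm{v}{L_2(\Om_\tau)}^2+\snorm{v}{H^1(\Om_\tau)}^2}$ and apply the same covering argument to each of the $L_2(\Om_\tau)$-norms that appear in the seminorm $\snorm{v}{H^1(\Om_\tau)}=\sum_{|\al|=1}\norm{D^\al v_0}{L_2(\Om_\tau)}$, since $v_0$ is a fixed function on $\mS$ independent of~$\tau$. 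Combined with the norm equivalence~\eqref{norms euiv}, this yields a bound
\begin{equation*}
\sum_{\tau\in\De}\norm{v}{H^1(\Om_\tau)}'^2\le C\,\norm{v}{H^1(\mS)}^2.
\end{equation*}

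To pass from the endpoints to arbitrary $s\in[0,1]$, consider the linear operator $T\colon v\mapsto (v|_{\Om_\tau})_{\tau\in\De}$ acting into the Hilbert direct sum equipped with the squared norm $\sum_\tau\norm{\cdot}{H^\sigma(\Om_\tau)}'^2$. The two estimates just proved express the boundedness of $T$ from $L_2(\mS)$ into $\bigoplus_\tau L_2(\Om_\tau)$ and from $H^1(\mS)$ into $\bigoplus_\tau H^1(\Om_\tau)$, with operator norms independent of~$\De$. Real (or equivalently complex) interpolation of Hilbert space direct sums commutes with the $\bigoplus$, so that $[\bigoplus_\tau L_2(\Om_\tau),\bigoplus_\tau H^1(\Om_\tau)]_s=\bigoplus_\tau[L_2(\Om_\tau),H^1(\Om_\tau)]_s=\bigoplus_\tau H^s(\Om_\tau)$ via~\eqref{equ:H 1 over 2}, while on the domain side $[L_2(\mS),H^1(\mS)]_s=H^s(\mS)$. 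Interpolating $T$ between its two endpoint bounds delivers~\eqref{sum norm S tau} with a constant $\al_7$ depending only on $N_0$ and on the equivalence constants in~\eqref{norms euiv}.

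The only delicate point is the interchange of interpolation with the direct sum: one must check that the interpolation norm on the direct sum coincides (with constants independent of the index set~$\De$) with the $\ell^2$-combination of the interpolation norms of the summands. This is standard for Hilbert space direct sums in the K-method, but it is the step that needs to be pointed out carefully; once it is in place, the proof reduces to two short counting arguments at the endpoints.
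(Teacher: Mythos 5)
Your proof is correct, but it follows a genuinely different route from the paper's. The paper first invokes Lemma~\ref{De Om tau card} to get the finite-overlap bound, then applies a coloring argument to split $\sett{\intt\Om_\tau:\tau\in\De}$ into boundedly many families of mutually disjoint patches, and for each such family uses the superadditivity of the fractional-order norm over disjoint subdomains, $\sum_{\tau\in I_k}\norm{v}{H^s(\Om_\tau)}'^2\le\norm{v}{H^s(\mS)}'^2$, which it cites to \cite{Carstensen2004,Pet}; the factor $\al_7$ then comes from the number of colors and the equivalence~\eqref{norms euiv}. You instead prove the integer endpoints $s=0,1$ by a direct counting/overlap argument (which is elementary, since there the primed norms are built from $L_2$-norms of globally defined functions $D^\al v_{\ell-1}$) and then interpolate the restriction operator $T\colon v\mapsto(v|_{\Om_\tau})_\tau$, using that Hilbert-space interpolation commutes with $\ell^2$ direct sums. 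The delicate step you flag — that $[\bigoplus_\tau L_2(\Om_\tau),\bigoplus_\tau H^1(\Om_\tau)]_s\hookrightarrow\bigoplus_\tau H^s(\Om_\tau)$ with a universal constant — is genuine but holds in the easy direction you need: for any decompositions $x_\tau=a_\tau+b_\tau$ one has $\bigl(\sum_\tau K(t,x_\tau)^2\bigr)^{1/2}\le\bigl(\sum_\tau\norm{a_\tau}{}^2\bigr)^{1/2}+t\bigl(\sum_\tau\norm{b_\tau}{}^2\bigr)^{1/2}$ by Minkowski, whence the embedding with constant~$1$ in the K-method. This is in fact the same mechanism that underlies the superadditivity result the paper cites, so your argument is essentially a self-contained version of what the paper outsources to the literature; what the paper's coloring route buys is brevity given that citation, while yours avoids both the coloring step and the external reference at the cost of spelling out the interpolation of direct sums. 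The only blemishes are cosmetic: the constant $2$ in your $s=1$ estimate should really account for all four terms of $\norm{v}{H^1(\Om_\tau)}'$ (or apply Cauchy--Schwarz twice), and one should note that for fractional $s$ the primed norm on $\Om_\tau$ is by the paper's convention precisely the interpolation norm from~\eqref{equ:H 1 over 2}, so no further domain-dependent equivalence is needed on the target side.
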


\begin{proof}
Since $\De$ is regular, by applying Lemma~\ref{De Om tau card}, there holds
\begin{align}
\max 
\sett{ 
\cardd
\sett{ 
\tau'\in \De:
\intt\Om_{\tau'}
\cap 
\intt\Om_{\tau} \not= \emptyset
}:
\tau\in \De
}
\le 
\al_6.
\label{max card int}
\end{align}
The set $\sett{\intt\Om_{\tau}: \tau\in \De}$ is a set of overlapping subsets 
which covers the unit sphere $\mS$. 
The coloring argument (see 
e.g.~\cite{CarstensenMaischakStephan01})
suggests that the set $\sett{\intt\Om_{\tau}: \tau\in \De}$ can be divided 
into $C_1$ groups 
\[
\sett{\intt\Om_{\tau}: \tau\in I_k}, 
\quad 
k=1,\ldots, C_1,
\]
so that each group consists of mutually disjoint subsets. Here, the constant 
$C_1$ satisfies 
\[
C_1 
\le 
\max 
\sett{ 
\cardd
\sett{ 
\tau'\in \De:
\intt\Om_{\tau'}
\cap 
\intt\Om_{\tau} \not= \emptyset
}:
\tau\in \De
}.
\]
Since $\intt\Om_{\tau}\cap \intt\Om_{\tau'} = \emptyset$ if $\tau$ and $\tau'$ 
are two triangles that belong to the set $I_k$ and $\bigcup\sett{\Om_{\tau}: 
\tau\in I_k} \subset \mS$, there holds
\[
\sum_{\tau\in I_k}
\norm{v}{H^s(\Om_{\tau})}'^2
\le 
\norm{v}{H^s(\mS)}'^2, 
\quad 
k=1,\ldots, C_1,
\]
see~\cite{Carstensen2004, Pet}. We obtain 
\[
\sum_{\tau\in \De} 
\norm{v}{H^s(\Om_{\tau})}'^2
=
\sum_{k=1}^{C_1} 
\sum_{\tau\in I_k}
\norm{v}{H^s(\Om_{\tau})}'^2
\le 
C_1 
\norm{v}{H^s(\mS)}'^2
\le 
C_1 
\ga_3^2 
\norm{v}{H^s(\mS)}^2
\]
noting~\eqref{norms euiv}.
The inequality~\eqref{sum norm S tau} can then be derived by denoting $\al_7 = 
C_1 \ga_3^2$, completing the proof of the lemma.
\end{proof}

Recalling the local error estimator $\eta_{\De,s}(\tau)$ (see~\eqref{etaXtau}), 
for a 
subset $\Om\subset \mS$, we define the \textit{error estimator} 
$\eta_{\De,s}(\Om)$ 
by 
\[
\eta_{\De,s}(\Om) 
=
\brac{ 
\sum_{\tau\in \De
\atop 
\tau \cap \Om \not= \emptyset
}
\eta_{\De,s}(\tau)^2
}^{1/2}.
\]
In particular, we denote by $\eta_{\De,s}(\mS)$ the residual-type 
error estimator with respect to the mesh 
$\De$, i.e.,
\begin{equation}\label{etaS}
\eta_{\De,s}(\mS) 
=
\left(
\sum_{\tau\in\De} 
\eta_{\De,s}(\tau)^2
\right)^{1/2}.
\end{equation}

We are now ready to prove the main result of this section.
The error $\norm{u-u_{\De}}{H^s(\mS)}$ will be bounded above by the residual 
error 
estimator $\eta_{\De,s}(\mS)$.

\begin{theorem}[A posteriori residual upper 
bound]\label{t:upp bound}
Let $\De$ be  a 
regular spherical triangulation such that 
$\abs{\Om_\tau}<1$ for all $\tau\in 
\De$.
Let 
$u$ and $u_{\De}$ be the weak and approximate 
solutions defined by~\eqref{wsol1} 
and~\eqref{GalSol1}, respectively.
There  
exists a 
positive constant $\al_8$ depending only on 
the 
smallest angle of~$\De$
such that for all $0 \leq s\leq 1/2$
\begin{align}
\norm{u-u_{\De}}{{H^{s}(\mS)}}
\leq 
\al_8\, 
\eta_{\De,s}(\mS).
\label{u ux eta Xs}
\end{align}
Here, $\al_8$ is a positive constant depending 
only on $d$ and the smallest angle of $\De$.
\end{theorem}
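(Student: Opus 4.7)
The plan is to exploit the ellipticity of $\cN^*$ from~\eqref{Nv norm v sm1} to bound the error by the dual norm of the residual, then employ Galerkin orthogonality against the quasi-interpolant $Q$ from Subsection~\ref{s:Q def} to replace a generic test function by its interpolation error, and finally sum the local contributions using Lemmas~\ref{l:uQu1} and~\ref{l:coloring}. Since $u$ solves~\eqref{wsol1}, we have $\cN^*(u-u_\De)=f-\cN^* u_\De=\cR(u_\De)$, so the lower bound in~\eqref{Nv norm v sm1} gives
\begin{equation*}
\al_1\norm{u-u_\De}{H^s(\mS)}\le\norm{\cR(u_\De)}{H^{s-1}(\mS)}.
\end{equation*}
The duality~\eqref{equ:neg nor} with $s_1=s-1$ and $s_2=1-s$ then represents the right-hand side as the supremum of $\inpro{\cR(u_\De)}{w}/\norm{w}{H^{1-s}(\mS)}$ over nonzero $w\in H^{1-s}(\mS)$.

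Because $0\le s\le 1/2$ forces $H^{1-s}(\mS)\subset H^{1/2}(\mS)$, both $w$ and $Qw\in S_d^r(\De)\subset H^{1/2}(\mS)$ are admissible in~\eqref{Ra u} and~\eqref{GalSol1}, so $\inpro{\cR(u_\De)}{Qw}=a(u-u_\De,Qw)=0$ and hence $\inpro{\cR(u_\De)}{w}=\inpro{\cR(u_\De)}{w-Qw}$. Splitting into triangles as in~\eqref{RuX tau v} and applying the Cauchy--Schwarz inequality on each $\tau$,
\begin{equation*}
|\inpro{\cR(u_\De)}{w-Qw}|\le\sum_{\tau\in\De}\norm{\cR_\tau(u_\De)}{L_2(\tau)}\norm{w-Qw}{L_2(\tau)}.
\end{equation*}
For the second factor I would invoke Lemma~\ref{l:uQu1} at the fractional order $1-s\in[1/2,1]$ to obtain $\norm{w-Qw}{L_2(\tau)}\le\al_5 h_\tau^{1-s}\norm{w}{H^{1-s}(\Om_\tau)}'$. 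A discrete Cauchy--Schwarz step then factors the sum as $\eta_{\De,s}(\mS)\,\brac{\sum_{\tau\in\De}\norm{w}{H^{1-s}(\Om_\tau)}'^2}^{1/2}$, and the coloring Lemma~\ref{l:coloring} bounds the remaining factor by $\sqrt{\al_7}\,\norm{w}{H^{1-s}(\mS)}$. Taking the supremum over $w$ and dividing by $\al_1$ yields~\eqref{u ux eta Xs} with the explicit constant $\al_8=\al_5\sqrt{\al_7}/\al_1$.

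The main obstacle is the interplay between the Sobolev index $s$ and Galerkin orthogonality. The duality step forces the test function to lie in $H^{1-s}(\mS)$, while the cancellation $\inpro{\cR(u_\De)}{Qw}=0$ only makes sense when $Qw$ is itself an admissible test function for the weak equation, i.e.\ when the embedding $H^{1-s}(\mS)\subset H^{1/2}(\mS)$ is available. The assumption $s\le 1/2$ is precisely what guarantees this embedding, so that the quasi-interpolation argument can be applied; for $s>1/2$ the embedding would fail and a different device (for instance, working directly with a fractional seminorm of the residual or adopting a stronger approximation operator) would be required. A secondary point that needs checking is that the fractional index $1-s$ lies in the admissible range $[0,1]$ of Lemma~\ref{l:uQu1}, which is automatic here, and that the constant produced from $\al_1,\al_5,\al_7$ depends only on $d$ and the smallest angle of $\De$, as stated.
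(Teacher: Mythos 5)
Your proposal is correct and follows essentially the same route as the paper's proof: ellipticity of $\cN^*$ from~\eqref{Nv norm v sm1}, the duality representation~\eqref{equ:neg nor}, Galerkin orthogonality to insert $w-Qw$, the triangle-wise Cauchy--Schwarz estimate with Lemma~\ref{l:uQu1} at order $1-s$, and the coloring Lemma~\ref{l:coloring}, yielding the same constant $\al_1^{-1}\al_5\sqrt{\al_7}$. Your added remarks on why $s\le 1/2$ is needed for the admissibility of the test functions match the paper's (briefer) observation and require no correction.
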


\begin{proof}
Employing~\eqref{Ra u}, \eqref{wsol1} and~\eqref{GalSol1}, we derive 
\begin{align}
\inpro{\cR(u_{\De})}{v}
&
=
a(u-u_{\De},v) 
=
0
\quad 
\forall 
v\in S_d^r(\De).
\label{R def}
\end{align}
Using the duality argument~\eqref{equ:neg nor} and noting \eqref{R def}, we 
obtain
\begin{align*}
\norm{\cR(u_{\De})}{H^{s-1}(\mS)}
&
=
\sup_{v \in H^{1-s}(\mS)
\atop 
v\not= 0}
\frac{\inpro{\cR(u_{\De})}{v}
}{\norm{v}{H^{1-s}(\mS)}}\\
&=
\sup_{v \in H^{1-s}(\mS)
\atop 
v\not= 0
}
\frac{\inpro{\cR(u_{\De})}{v-Qv}
}{\norm{v}{H^{1-s}(\mS)}}.
\end{align*}
Note that $v\in H^{1-s}(\mS) \subset L_2(\mS)$ for every $s\in [0,1/2]$,
and
$Qv\in S_d^r(\De)\subset H^{1}(\mS)$.
By~\eqref{RuX tau v}, we have 
\begin{align*}
\norm{\cR(u_{\De})}{H^{s-1}(\mS)}
&=
\sup_{v \in H^{1-s}(\mS)
\atop 
v\not= 0
}
\frac{
\displaystyle
\sum_{\tau\in \De}
\int_\tau \cR(u_{\De})\,(v-Qv) d\sigma
}{\norm{v}{H^{1-s}(\mS)}}\\
&
\leq
\sup_{v \in H^{1-s}(\mS)
\atop 
v\not= 0
}
\frac{
\displaystyle
\sum_{\tau\in \De}
\norm{\cR(u_{\De})}{L_2(\tau)}
\norm{v-Qv}{L_2(\tau)}
}{\norm{v}{H^{1-s}(\mS)}},
\end{align*}
where in the second step we apply Cauchy-Schwarz inequality. This together 
with 
the result in Lemma~\ref{l:uQu1} gives
\begin{align*}
\norm{\cR(u_{\De})}{H^{s-1}(\mS)}
&
\leq
\al_5
\sup_{v \in H^{1-s}(\mS)
\atop v\not= 0}
\frac{
\displaystyle
\sum_{\tau\in \De}
h_\tau^{1-s}
\norm{\cR(u_{\De})}{L_2(\tau)}
\norm{v}{H^{1-s}(\Om_\tau)}'
}{\norm{v}{H^{1-s}(\mS)}}.
\end{align*}
By using Cauchy-Schwarz inequality and applying Lemma~\ref{l:coloring}, we have
\begin{align*}
\norm{\cR(u_{\De})}{H^{s-1}(\mS)}
&
\leq
\al_5
\brac{
\sum_{\tau\in \De}
h_\tau^{2-2s}
\norm{\cR(u_{\De})}{L_2(\tau)}^2
}^{1/2}
\sup_{v \in H^{1-s}(\mS)
\atop 
v\not= 0
}
\frac{
\brac{
\displaystyle
\sum_{\tau\in \De}
\norm{v}{H^{1-s}(\Om_\tau)}'^2
}^{1/2}
}{\norm{v}{H^{1-s}(\mS)}}\\
&\leq
\al_5
\sqrt{\al_7}
\brac{
\sum_{\tau\in \De}
h_\tau^{2-2s}
\norm{\cR(u_{\De}) }{L_2(\tau)}^2
}^{1/2}.
% \\
% &
% =
% \al_7\, 
% \eta_{\De,s}(\mS),
\end{align*}
Noting~\eqref{eq5}, \eqref{N u om f S} and~\eqref{N star def}, we have 
$\cR(u_{\De}) = \cN^*(u-u_{\De})$.
{Since $0\le s\le 1/2$, we have $u\in H^{1/2}(\mS) \subset H^s(\mS)$.}
Applying the inequality~\eqref{Nv norm v sm1} and noting~\eqref{etaS} 
and~\eqref{etaXtau}, we obtain 
\[
\norm{u-u_{\De}}{H^s(\mS)}
\le 
\al_1^{-1} \al_5 
\sqrt{\al_7}\,  
\eta_{\De,s}(\mS),
\]
finishing the proof of the theorem.
\end{proof}

\section{An a posteriori hierarchical error 
estimation}\label{s:hierarchical 
sec}

Hierarchical basis techniques have been used to prove a posteriori error 
estimates when solving hypersingular integral equation in two dimensions and 
linear elements, see e.g.~\cite{Maischak1997, MunSte00, Carstensen2007, 
Erath2013}.
In this section, we discuss the use of these techniques to prove an a 
posteriori upper bound for the error 
$\norm{u-u_\De}{H^{1/2}(\mS)}$ when solving the hypersingular integral equation 
on the unit sphere,
where the approximate solution $u_{\De}$ is found in the space 
$S_1^0(\De)$
and $\De$ is a spherical triangulation on $\mS$.
In the remainder of 
this paper,
we use $S(\De)$ instead of 
$S_1^0(\De)$ for notational convenience.
Suppose that the set $V_{\De} = \sett{\vecv_1,\vecv_2,\ldots, \vecv_M}$
is the set of all vertices of $\De$.
For each vertex $\vecv_i$, the associated basis function $B_{\vecv_i}$ is 
defined by
\begin{equation}\label{Bvi def}
B_{\vecv_i}(\vecx)
=
\begin{cases}
0 
&
\text{if }
\vecx\notin 
\bigcup\sett{\tau: \tau\in T_{\vecv_i}^{\De}}
\\
b_{1,\tau}(\vecx)
&
\text{if }
\vecx\in\tau = \langle \vecv_i,\vecv_j,\vecv_k\rangle \in T_{\vecv_i}^{\De},
\end{cases}
\end{equation}
where $b_{1,\tau}(\vx)$ is the first spherical barycentric coordinate of $\vx$ 
with respect to $\tau$, see~\eqref{b1tau def} and~\eqref{b1v deter}.
We then have 
\[
S(\De)
=
\spann
\sett{B_{\vecv_1}, B_{\vecv_2}, \ldots, B_{\vecv_M}}.
\]
Recalling the definition of the quasi-interpolation operator with respect to 
the space $S_d^r(\De)$ in Subsection~\ref{s:Q def},
the quasi-interpolation operator: $Q: L_2(\mS) 
\goto S(\De)$ is given by
\begin{equation}\label{Qv def Hier}
Qv=\sum_{i=1}^M \nu_{\vecv_i}(v) B_{\vecv_i}, \quad 
v\in L_2(\mS).
\end{equation}
Here, $\nu_{\vecv_i}(v) = v(\vecv_i)$ for all $v\in S(\De)$.
The quasi-interpolation operator is a projection onto $S(\De)$, i.e. $Q^2 v = 
Qv$ for every $v\in L_2(\mS)$.
Every $s\in S(\De)$ can uniquely be written as 
\[
s 
=
\sum_{i=1}^M 
\nu_{\vecv_i}(s) 
B_{\vecv_i},
\quad 
\text{where }
\nu_{\vv_i}(s) = s(\vv_i).
\]

\begin{lemma}\label{BiX lem}
Let $\De$ be a regular spherical triangulation such that $h_{\tau}<1$ for all 
$\tau\in \De$. 
For every vertex $\vecv$ and any $\tau\in T_{\vecv}^{\De}$,
the basis function $B_{\vecv}\in S(\De)$ associated with
$\vecv$ (see~\eqref{Bvi def}) satisfies 
\begin{align}\label{BiX ine}
\norm{B_{\vecv}}{H^{1/2}(\mS)}
\leq
\al_9 h_\tau^{1/2},
\end{align}
where 
 $\al_9$ is a constant which depends only on 
the smallest angle of $\De$.
\end{lemma}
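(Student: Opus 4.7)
The plan is to interpolate between $L_2(\mS)$ and $H^1(\mS)$. Since $\mS$ has empty boundary, the two characterizations in~\eqref{equ:H 1 over 2} coincide for $\Om=\mS$, giving $H^{1/2}(\mS) = [L_2(\mS), H^1(\mS)]_{1/2}$; combined with the norm equivalence~\eqref{norms euiv}, the standard Hilbert-space interpolation inequality will reduce the proof to the two endpoint bounds
\[
\norm{B_{\vv}}{L_2(\mS)} \le C\, h_\tau
\quad\textrm{and}\quad
\norm{B_{\vv}}{H^1(\mS)}' \le C,
\]
with constants depending only on the smallest angle of $\De$.

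A preliminary observation is that every triangle sharing the vertex $\vv$ has diameter comparable to $h_\tau$: for any $\tau'\in T_{\vv}^{\De}$, both $\tau$ and $\tau'$ lie in $\Om_\tau\cap\Om_{\tau'}$, so~\eqref{regu local uniform} yields $h_{\tau'}\le|\Om_\tau|\le h_\tau/\beta_5$ and, by symmetry, $h_\tau\le h_{\tau'}/\beta_5$. Using~\eqref{Tvi bounded} together with~\eqref{Atau htau regular}, the support $\bigcup\sett{\tau':\tau'\in T_{\vv}^{\De}}$ of $B_{\vv}$ has area at most $C h_\tau^2$. Since each $b_{1,\tau'}$ is bounded on $\tau'$ by a constant depending only on the minimum angle (here using $h_\tau<1$), the $L_2$-bound $\norm{B_{\vv}}{L_2(\mS)} \le C h_\tau$ follows immediately.

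The main obstacle is bounding $\snorm{B_{\vv}}{H^1(\mS)}$ via the seminorm~\eqref{equ:sno} for $\ell=1$, that is, the $L_2$-norm on $\mS$ of the gradient of the degree-$0$ homogeneous extension $(B_{\vv})_0(\vx)=B_{\vv}(\vx/|\vx|)$. On each $\tau'=\langle\vv,\vv_j,\vv_k\rangle\in T_{\vv}^{\De}$, the restriction $B_{\vv}|_{\tau'}$ equals the linear function $b_{1,\tau'}(\vx)=\det(\vx,\vv_j,\vv_k)/\det(\vv,\vv_j,\vv_k)$ given by~\eqref{b1v deter}, so $(B_{\vv})_0(\vx)=b_{1,\tau'}(\vx)/|\vx|$ on the radial sector above $\tau'$. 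Differentiating and restricting to $\mS$ gives $\nabla (B_{\vv})_0=\nabla b_{1,\tau'} - b_{1,\tau'}(\vx)\,\vx$, whose magnitude is bounded by $|\nabla b_{1,\tau'}|+|b_{1,\tau'}|$. Since $|\nabla b_{1,\tau'}|=|\vv_j\times\vv_k|/|\det(\vv,\vv_j,\vv_k)|$ and elementary spherical geometry shows that $|\vv_j\times\vv_k|\sim h_{\tau'}$ and $|\det(\vv,\vv_j,\vv_k)|\sim h_{\tau'}^2$ (with constants controlled by the minimum angle), I obtain $|\nabla (B_{\vv})_0|\le C/h_{\tau'}$ on $\tau'$. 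Hence $\norm{D^\al (B_{\vv})_0}{L_2(\tau')}^2 \le (C/h_{\tau'})^2 A_{\tau'} \le C$ for each first-order multi-index, and summing over the at most $L$ members of $T_{\vv}^{\De}$ (using~\eqref{Tvi bounded} and $h_{\tau'}\sim h_\tau$) yields $\snorm{B_{\vv}}{H^1(\mS)}\le C$. Combined with the $L_2$-estimate, this produces $\norm{B_{\vv}}{H^1(\mS)}' \le C$; inserting both endpoint bounds into the interpolation inequality and reverting to $\norm{\cdot}{H^{1/2}(\mS)}$ via~\eqref{norms euiv} finally delivers $\norm{B_{\vv}}{H^{1/2}(\mS)} \le \al_9\, h_\tau^{1/2}$, as required.
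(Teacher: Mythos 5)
Your proof is correct and follows essentially the same route as the paper's: both establish the two endpoint bounds $\norm{B_{\vv}}{L_2(\mS)}\le C\,h_\tau$ and $\norm{B_{\vv}}{H^1(\mS)}'\le C$ (using the comparability $h_{\tau'}\sim h_\tau$ over $T_{\vv}^{\De}$ via \eqref{regu local uniform}, the cardinality bound \eqref{Tvi bounded}, and $h_\tau<1$), then interpolate between $L_2(\mS)$ and $H^1(\mS)$ and pass back to $\norm{\cdot}{H^{1/2}(\mS)}$ through \eqref{norms euiv}. The only difference is that you derive the local $L_2$- and $H^1$-seminorm estimates for the hat function by direct computation from the explicit barycentric formula \eqref{b1v deter}, whereas the paper obtains the same estimates by citing statements (5) and (8) of Proposition~5.1 in Neamtu--Schumaker together with \eqref{Atau htau regular} and \eqref{regular2}; your computation is a correct, self-contained substitute for that citation.
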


\begin{proof}
Since $\De$ is regular, the cardinality of $T_{\vecv}^{\De}$ is bounded, i.e.,
$\textrm{card}\brac{T_{\vecv}^{\De}}
\le L$
for some positive integer $L$ depending only on the smallest angle of $\De$, 
see~\eqref{Tvi bounded}. 
If $\tau, \tau'\in T_{\vv}^{\De}$,  then $\tau'\subset 
\Om_{\tau}$. This together with~\eqref{regu local uniform} implies 
\[
h_{\tau'}
\le 
C_1 
h_{\tau}
\quad 
\forall 
\tau'\in T_{\vv}^{\De},
\]
for some positive constant $C_1$ depending only on the smallest angle in $\De$. 
We then have 
\begin{equation}\label{tau taumax v}
\max\sett{h_{\tau'}: \tau' \in T_{\vecv}^{\De}}
\le 
C_1
h_{\tau}
\quad 
\forall 
\tau\in T_{\vecv}^{\De}.
\end{equation}
Statement~(5) in~\cite[Proposition 5.1]{NeaSch04} and~\eqref{Atau htau 
regular} give
\begin{align}
\norm{B_{\vecv}}{L_2(\tau')}
&
\le 
C_2
A_{\tau'}^{1/2}
\le 
C_2 \beta_4 h_{\tau'}
\quad 
\forall \tau'\in T_{\vecv}^{\De}.
\notag
% \label{Bi 12a}
\end{align}
Since $\supp 
B_{\vecv} \subset \bigcup\sett{\tau': \tau'\in T_{\vecv}^{\De}}$ and 
noting~\eqref{tau taumax v}, we obtain
\begin{align}
\norm{B_{\vecv}}{L_2(\mS)} 
&
\le 
\sqrt{L} C_2 \beta_4 
\max
\sett{
h_{\tau'}:
\tau'\in T_{\vecv}^{\De}
}
\le 
\sqrt{L} C_1 
C_2\beta_4 
h_{\tau}= C_3 h_{\tau}.
\label{Bv S htau}
\end{align}
Similarly, the
inequality (8) in~\cite[Proposition 5.1]{NeaSch04} together 
with~\eqref{regular2} and~\eqref{Atau 
htau regular}
yields 
\begin{align}
\snorm{B_{\vecv}}{H^1(\tau')}
&
\le 
C_4 
\rho_{\tau'}^{-1} A_{\tau'}^{1/2}
\le 
C_4\beta_2 \beta_4 = C_5.
\notag
% \label{Bi 12b}
\end{align}
Since this is true for all $\tau'\in T_{\vecv}^{\De}$ and $\supp B_{\vecv} = 
\bigcup\sett{\tau': \tau'\in T_{\vecv}^{\De}}$, we have
\begin{align}
\snorm{B_{\vecv}}{H^1(\mS)}
&
\le 
\sqrt{L} C_5.
\label{BvH1}
\end{align}
On the other hand,
the size $h_{\tau}$ is smaller than $1$ for every $\tau\in \De$. This 
together with~\eqref{Bv S htau} implies
\begin{equation*}\label{Bv C4}
\norm{B_{\vecv}}{L_2(\mS)}
\le 
C_3.
\end{equation*}
This together with~\eqref{BvH1} 
implies
\begin{align}
\norm{B_{\vecv}}{H^1(\mS)}'
\le 
% \brac{C_3^2 + N C_5^2}^{1/2} = C_6.
C_3 + \sqrt{L} C_5 = C_6.
\label{Bv C5}
\end{align}
Noting~\eqref{Bv S htau}, \eqref{Bv C5} and applying the interpolation 
inequality (see e.g.~\cite[Lemma B.1]{McL00}), we derive 
\begin{align}
\norm{B_{\vecv}}{H^{1/2}(\mS)}'
&
\le
\norm{B_{\vecv}}{L_2(\mS)}^{1/2}
\norm{B_{\vecv}}{H^1(\mS)}^{1/2}
\le 
\sqrt{C_3 C_6}\, 
h_{\tau}^{1/2}.
\notag
\end{align}
This together with~\eqref{norms euiv} yields~\eqref{BiX ine} where $\al_9 = 
\ga_3\sqrt{C_3 C_6}$,
completing the proof of this lemma.
\end{proof}

A spherical triangulation $\De'$ is said to be a refinement of another 
spherical triangulation $\De$ if 
every spherical triangle $\tau'\in \De'$ is a subtriangle of a triangle 
$\tau\in \De$.
When $\De'$ is a refinement of $\De$, we call $\De$ a coarser triangulation 
(coarser mesh) and $\De'$ is a finer triangulation (finer mesh). In this case, 
the two 
spherical triangulations are said to be nested.

Suppose that $\De$ and $\De'$ are two nested spherical triangulations, where 
$\De'$ is the finer mesh.  Then the space $S(\De)$ is a subspace of $S(\De')$.
We denote by $u_{\De}$ and $u_{\De'}$ the Galerkin solutions to the 
hypersingular integral equation~\eqref{N u om f S}, i.e., $u_{\De}\in 
S(\De)$ and 
$u_{\De'}\in S(\De')$ satisfy
\begin{equation}\label{uk wsol}
a(u_{\De}, v) 
=
\inpro{f}{v}
\quad 
\forall 
v\in S(\De),
\end{equation}
and 
\begin{equation}\label{ukp wsol}
a(u_{\De'}, v) 
=
\inpro{f}{v}
\quad 
\forall 
v\in S(\De').
\end{equation}
Following e.g.~\cite{Erath2013,Maischak1997, MunSte00}, 
we assume that the two triangulations $\De$ and $\De'$ satisfy the saturation 
assumption:
\begin{equation}\label{satur assumption}
\norm{u - u_{\De'}}{H^{1/2}(\mS)}
\le 
\eta 
\norm{u - u_{\De}}{H^{1/2}(\mS)}
\end{equation}
for some fixed $\eta\in (0,1)$. Here, the function $u$ in~\eqref{satur 
assumption} is the weak solution to the hypersingular integral equation defined 
by~\eqref{wsol1}. In our adaptive refinement strategy which will be discussed 
in Section~\ref{s:mesh ref}, {the approximate solution $u_{\De'}$ is not 
computed and the finer mesh $\De'$ only plays a role as a mean to evaluate only 
local error estimators which will then be used to 
conduct mesh refinement step and create
better 
approximation 
spaces}. In our numerical experiments (Section~\ref{s:num ex}), $\De'$ is  
created from $\De$ by joining midpoints of the three spherical edges in each 
spherical triangle of $\De$.

In this section, 
for each vertex $\vv_i\in V_\De$, 
we denote by $B_{\vecv_i}$ 
the hat function in $S(\De)$ 
corresponding to the 
vertex $\vv_i$, see~\eqref{Bvi def}. 
Since $V_{\De}\subset V_{\De'}$, 
the vertex $\vv_i$ is also a vertex in the spherical triangulation $\De'$. 
If $\vv_i\in V_{\De'}$,
we 
denote by $B_{\vv_i}'$
the hat function in $S(\De')$ associated with the vertex $\vv_i$.
We recall here that  $Q_\De$ and $Q_{\De'}$ denote the 
quasi-interpolation operators 
associated 
with the spaces $S(\De)$ and $S(\De')$, respectively. 
For each $\vv_i\in V_{\De'}$, we define a nodal estimator
\begin{align}
\mu_{\vv_i}
&
=
\frac{\inpro{\cR(u_{\De})}{B_{\vv_i}'}}{\norm{B_{\vv_i}'}{H^{1/2}(\mS)}},
\label{mu kp1 def}
\end{align}
 where $\cR(u_{\De})=f- \cN^*u_{\De}\in H^{-1/2}(\mS)$. 
 
\begin{lemma}\label{QX w = 0}
Let $\De$ and $\De'$ be two nested spherical triangulations where $\De'$ is the 
finer mesh. 
For every $v\in S(\De')$, we denote
\begin{align}
{
I_{\De} v 
:= 
\sum_{\vv_i\in V_{\De}}v(\vv_i) B_{\vv_i}.}
\label{May18b}
\end{align}
Suppose that $v\in S(\De')$ satisfies 
{$I_{\De}v = 0$,}
there holds
\begin{align}
v
&
=
\sum_{\vv_i\in V_{\De'}\setminus V_{\De}}
\nu_{\vv_i}'(v) B_{\vv_i}'.
\label{HyMul 1c}
\end{align}
Here, $\nu_{\vv_i}'$ is  the linear functional which picks the coefficient 
associated with vertex $\vecv_i\in V_{\De'}$. 
\end{lemma}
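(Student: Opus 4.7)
The plan is to expand $v$ in the nodal basis of $S(\Delta')$, split the expansion according to whether the index vertex lies in $V_\Delta$ or in $V_{\Delta'}\setminus V_\Delta$, and use the hypothesis $I_\Delta v=0$ together with linear independence of the coarse-mesh hat functions to kill the first group of terms.

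First, since $\{B_{\vv_i}':\vv_i\in V_{\Delta'}\}$ is a basis of $S(\Delta')$, every $v\in S(\Delta')$ admits a unique representation
\[
v=\sum_{\vv_i\in V_{\Delta'}}\nu_{\vv_i}'(v)\,B_{\vv_i}'.
\]
The linear functionals $\nu_{\vv_i}'$ are precisely the nodal evaluations: because $B_{\vv_j}'(\vv_i)=\delta_{ij}$ for $\vv_i,\vv_j\in V_{\Delta'}$, evaluating the expansion at a vertex $\vv_j$ gives $v(\vv_j)=\nu_{\vv_j}'(v)$. This is the same observation that was made for the single-level quasi-interpolant right after~\eqref{Qv def Hier}.

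Next, I split the index set $V_{\Delta'}=V_\Delta\,\dot\cup\,(V_{\Delta'}\setminus V_\Delta)$, which is possible because $\Delta'$ is a refinement of $\Delta$ and hence $V_\Delta\subset V_{\Delta'}$. This yields
\[
v=\sum_{\vv_i\in V_\Delta}v(\vv_i)\,B_{\vv_i}'+\sum_{\vv_i\in V_{\Delta'}\setminus V_\Delta}\nu_{\vv_i}'(v)\,B_{\vv_i}'.
\]
It remains to show that the first sum vanishes. By the assumption $I_\Delta v=0$ and the definition~\eqref{May18b},
\[
\sum_{\vv_i\in V_\Delta}v(\vv_i)\,B_{\vv_i}=0\quad\text{in }S(\Delta).
\]
Since $\{B_{\vv_i}:\vv_i\in V_\Delta\}$ is a basis for $S(\Delta)$, this forces $v(\vv_i)=0$ for every $\vv_i\in V_\Delta$. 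Substituting into the split expansion immediately yields~\eqref{HyMul 1c}.

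This argument is essentially bookkeeping on the nested nodal bases, so there is no real obstacle. The only point requiring a tiny bit of care is not to confuse the two nodal basis functions $B_{\vv_i}$ and $B_{\vv_i}'$ at a common vertex $\vv_i\in V_\Delta$: the cancellation of the first sum is driven by the vanishing of the coefficients $v(\vv_i)$ (a consequence of $I_\Delta v=0$ and the linear independence of the coarse-mesh basis), not by any relation between $B_{\vv_i}$ and $B_{\vv_i}'$ themselves.
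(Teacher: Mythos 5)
Your proof is correct and follows essentially the same route as the paper's: expand $v$ in the nodal basis of $S(\De')$, identify $\nu_{\vv_i}'(v)=v(\vv_i)$ via $B_{\vv_j}'(\vv_i)=\delta_{ij}$, and use $I_{\De}v=0$ together with the linear independence of $\{B_{\vv_i}\}$ to conclude that the coefficients at coarse vertices vanish. No issues.
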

\begin{proof}
Since $v\in S(\De')$, $v$ can uniquely be 
 written as 
\begin{align}
v
=
\sum_{\vv_j\in V_{\De'}}
\nu_{\vv_j}'(v) B_{\vv_j}',
\quad 
\text{where }
\nu_{\vv_j}'(v)
=
v(\vv_j)
\quad 
\text{for all }
\vv_j\in V_{\De'}.
\label{HyMul 1b}
\end{align}
It follows that 
\begin{align}
I_{\De} v
&
=
\sum_{\vv_i\in V_{\De}} v(\vv_i) B_{\vv_i} 
=
\sum_{\vv_i\in V_{\De}} 
\brac{
\sum_{\vv_j\in V_{\De'}}
\nu_{\vv_j}'(v) B_{\vv_j}'
} 
(\vv_i) B_{\vv_i} 
\notag
\\
&
=
\sum_{\vv_i\in V_{\De}} 
\sum_{\vv_j\in V_{\De'}}
\nu_{\vv_j}'(v) B_{\vv_j}'
(\vv_i) B_{\vv_i}. 
\label{HyMul 1a}
\end{align}
On the other hand, we have 
\begin{align}
B_{\vv_j}'(\vv_i)
=
\begin{cases}
1 
&
\text{if }
\vv_i = \vv_j
\\
0
&
\text{if }
\vv_i \not= \vv_j.
\end{cases}
\notag
\end{align}
{This together with~\eqref{HyMul 1a} yields
\begin{align}
I_{\De}v
=
\sum_{\vv_i\in V_{\De}}
\nu_{\vv_i}'(v) B_{\vv_i}.
\notag
\end{align}
}
Since $I_{\De} v
= 0$, there holds 
\[
\sum_{\vv_i\in V_{\De}}
\nu_{\vv_i}'(v) B_{\vv_i}
=
0.
\]
This yields 
\begin{align}
\nu_{\vv_i}'(v) 
=
0
\quad 
\forall 
\vv_i\in V_{\De}.
\label{HyMul 1d}
\end{align}
Equalities~\eqref{HyMul 1b} and~\eqref{HyMul 1d} imply~\eqref{HyMul 1c}, 
completing the proof of this lemma.
\end{proof}

\begin{lemma}\label{l:e uDe De}
Let $u_{\De}$ and $u_{\De'}$ be Galerkin solutions defined by~\eqref{uk wsol} 
and~\eqref{ukp wsol}, respectively. Denote 
$e := u_{\De'} - u_{\De}$ and $w:= e - I_{\De} e$.
There holds 
\begin{align}
\inpro{\cN^* e}{e}
=
\sum_{\vv_i\in V_{\De'}{\setminus}V_{\De}}
\nu_{\vv_i}'(w)
\inp{\cR(u_{\De})}{B_{\vv_i}'}.
\notag
\end{align}
\end{lemma}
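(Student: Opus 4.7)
The plan is to reduce $\inpro{\cN^* e}{e}$ to a sum of residual pairings against the fine-mesh hat functions that are added when passing from $\De$ to $\De'$, via two Galerkin orthogonalities and the expansion Lemma~\ref{QX w = 0}.

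First I would rewrite the left-hand side through the bilinear form. Since $a(u,v)=\inpro{\cN^* u}{v}$ by~\eqref{auv def}, we have $\inpro{\cN^* e}{e}=a(e,e)=a(u_{\De'},e)-a(u_{\De},e)$. Because $e=u_{\De'}-u_{\De}\in S(\De')$, the Galerkin identity~\eqref{ukp wsol} gives $a(u_{\De'},e)=\inpro{f}{e}$, and therefore
\[
\inpro{\cN^* e}{e}
=\inpro{f}{e}-a(u_{\De},e)
=\inpro{f-\cN^* u_{\De}}{e}
=\inpro{\cR(u_{\De})}{e}.
\]

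Next I would split $e=I_{\De} e+w$ and kill the coarse-mesh part using Galerkin orthogonality. Since $I_{\De} e\in S(\De)$ (its defining sum in~\eqref{May18b} is a linear combination of the coarse hat functions $B_{\vv_i}$), the Galerkin equation~\eqref{uk wsol} for $u_{\De}$ combined with~\eqref{Ra u} yields $\inpro{\cR(u_{\De})}{v}=0$ for every $v\in S(\De)$. Applying this to $v=I_{\De} e$ gives
\[
\inpro{\cR(u_{\De})}{e}=\inpro{\cR(u_{\De})}{w}.
\]

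Finally I would expand $w$ in the fine-mesh hat basis supported on the newly added vertices. Observe that for any $s\in S(\De)$ one has $I_{\De} s = s$ (since $B_{\vv_i}(\vv_j)=\delta_{ij}$ for $\vv_i,\vv_j\in V_{\De}$), so $I_{\De}(I_{\De} e)=I_{\De} e$ and therefore $I_{\De} w = I_{\De} e - I_{\De}(I_{\De} e)=0$. Lemma~\ref{QX w = 0} then applies to $w\in S(\De')$ and gives
\[
w=\sum_{\vv_i\in V_{\De'}\setminus V_{\De}}\nu_{\vv_i}'(w)\,B_{\vv_i}'.
\]
Substituting this expansion into $\inpro{\cR(u_{\De})}{w}$ and using linearity yields the claimed identity. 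The only non-mechanical point is verifying that $I_{\De}$ acts as the identity on $S(\De)$ so that $I_{\De} w=0$ and Lemma~\ref{QX w = 0} is genuinely applicable; once that is settled, the rest is a chain of elementary substitutions.
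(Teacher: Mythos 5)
Your proof is correct and uses essentially the same ingredients as the paper's: the decomposition $e=I_{\De}e+w$, the projection property $I_{\De}w=0$ feeding into Lemma~\ref{QX w = 0}, and the two Galerkin identities \eqref{uk wsol} and \eqref{ukp wsol}. The only difference is the order of the algebra (you pass to $\inpro{\cR(u_{\De})}{e}$ first and then annihilate $I_{\De}e$, whereas the paper annihilates $\inpro{\cN^*e}{I_{\De}e}$ first and converts to the residual at the end), which is immaterial.
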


\begin{proof}
Recall that $u_{\De}$ and $u_{\De'}$ are the Galerkin solutions in the spaces 
$S(\De)$ and $S(\De')$, respectively. Noting~\eqref{uk wsol}, \eqref{ukp wsol} 
and~\eqref{auv def},
we have 
\begin{align}
\inpro{\cN^* u_{\De}}{v}
=
\inpro{f}{v}
\quad 
\forall 
v\in S(\De),
\label{HyMul 2e}
\end{align}
and 
\begin{align}
\inpro{\cN^* u_{\De'}}{v}
=
\inpro{f}{v}
\quad 
\forall 
v\in S(\De').
\label{HyMul 2f}
\end{align}
Noting that $I_{\De}$ is a 
projection, i.e., 
$(I_{\De})^2 = 
I_{\De}$,
we obtain
\begin{align}
I_{\De} w_{} 
&
=
I_{\De}(e_{} - I_{\De} e_{})
=
I_{\De} e_{} - I_{\De} e_{} 
=
0.
\label{May18a}
\end{align}
Noting that $S(\De)\subset S(\De')$, we have 
$e = u_{\De'}-u_{\De}\in S(\De')$ and $w = e - I_{\De}e \in S(\De')$. 
This together with~\eqref{May18a} and the result in Lemma~\ref{QX w = 0}
implies
\begin{align}
w_{} 
&
=
\sum_{\vv_i\in V_{\De'}{\setminus}V_{\De}}
\nu_{\vv_i}'(w_{}) B_{\vv_i}'.
\label{HyMul 2c}
\end{align}
By~\eqref{HyMul 2e} and~\eqref{HyMul 2f},
we have 
\begin{align}
\inpro{\cN^*e_{}}{I_{\De}e_{}}
&
=
\inpro{\cN^*u_{\De'}}{I_{\De}e_{}}
-
\inpro{\cN^*u_{\De}}{I_{\De}e_{}}
\notag
\\
&
=
\inpro{f}{I_{\De} e_{}}
-
\inpro{f}{I_{\De} e_{}}
\notag
\\
&
=
0,
\label{HyMul 2h}
\end{align}
noting that $I_{\De} e_{}\in S(\De)\subset S(\De')$.
It follows from~\eqref{HyMul 2h} and~\eqref{HyMul 2c} that
\begin{align}
\inpro{\cN^*e_{}}{e_{}}
&
=
\inpro{\cN^*e_{}}{e_{} - I_{\De} e_{}}
=
\inpro{\cN^*e_{}}{w_{}}
\notag
\\
&
=
\inpro{\cN^*e_{}}{
\sum_{\vv_i\in V_{\De'}{\setminus}V_{\De}}
\nu_{\vv_i}'(w_{}) B_{\vv_i}'
}
\notag
\\
&
=
\sum_{\vv_i\in V_{\De'}{\setminus}V_{\De}}
\nu_{\vv_i}'(w_{})
\inpro{\cN^* e_{}}{B_{\vv_i}'}.
\notag
\end{align}
By the definition of $e$ and by using~\eqref{HyMul 2f} (noting that 
$B_{\vv_i}'\in S(\De')$), we obtain 
\begin{align}
\inpro{\cN^* e}{e}
&
=
\sum_{\vv_i\in V_{\De'}{\setminus}V_{\De}}
\nu_{\vv_i}'(w_{})
\inpro{\cN^* (u_{\De'}-u_{\De})}{B_{\vv_i}'}
\notag
\\
&
=
\sum_{\vv_i\in V_{\De'}{\setminus}V_{\De}}
\nu_{\vv_i}'(w_{})
\inpro{f-\cN^*u_{\De}}{B_{\vv_i}'}
\notag
\\
&
=
\sum_{\vv_i\in V_{\De'}{\setminus}V_{\De}}
\nu_{\vv_i}'(w_{})
\inpro{\cR(u_{\De})}{B_{\vv_i}'},
\notag
\end{align}
completing the proof of this lemma.

\end{proof}

We are now ready to prove the main theorem of this section, an upper bound for 
the error
$\norm{u - u_{\De}}{H^{1/2}(\mS)}$ in terms of the error estimators 
$\mu_{\vv_i}$, see~\eqref{mu kp1 def}.

\begin{theorem}[A posteriori hierarchical upper bound]\label{t:main satur}
Let $\De$ and $\De'$ be two nested spherical triangulations 
(where $\De'$ is the 
finer mesh)
satisfying the saturation assumption~\eqref{satur assumption}.
There exists a positive number $\al_{10}$ 
depending only on the smallest angle of 
the triangulations and the saturation assumption constant $\eta$ 
(see~\eqref{satur assumption}) such that 
\begin{align}
\norm{u - u_{\De}}{H^{1/2}(\mS)}^2
&
\le 
\al_{10}
\sum_{\vv_i\in V_{\De'}{\setminus}V_{\De}}
\brac{\mu_{\vv_i}}^2.
\label{HyMul 2a}
\end{align}
where $\mu_{\vv_i}$ are the nodal estimators defined by~\eqref{mu kp1 def}.
\end{theorem}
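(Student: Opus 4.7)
The strategy has four stages: a saturation-driven reduction from $u-u_\De$ to the discrete difference $e=u_{\De'}-u_\De$, an appeal to coercivity, the expansion supplied by Lemma~\ref{l:e uDe De}, and a stability estimate for the hierarchical decomposition.

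First, I would set $e:=u_{\De'}-u_\De$ and apply the triangle inequality together with the saturation assumption~\eqref{satur assumption}:
\begin{align*}
\norm{u-u_\De}{H^{1/2}(\mS)}
\le \norm{u-u_{\De'}}{H^{1/2}(\mS)} + \norm{e}{H^{1/2}(\mS)}
\le \eta\,\norm{u-u_\De}{H^{1/2}(\mS)} + \norm{e}{H^{1/2}(\mS)},
\end{align*}
whence $\norm{u-u_\De}{H^{1/2}(\mS)}\le (1-\eta)^{-1}\norm{e}{H^{1/2}(\mS)}$. This reduces the task to bounding $\norm{e}{H^{1/2}(\mS)}$ by the nodal estimators.

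Second, by the coercivity~\eqref{equ:bi li nor}, $\al_1\norm{e}{H^{1/2}(\mS)}^2\le a(e,e)=\inpro{\cN^* e}{e}$. At this point Lemma~\ref{l:e uDe De} rewrites the right-hand side as a sum over the newly inserted vertices of the fictional refinement $\De'$:
\begin{align*}
\inpro{\cN^* e}{e} = \sum_{\vv_i\in V_{\De'}\setminus V_{\De}} \nu_{\vv_i}'(w)\,\inpro{\cR(u_\De)}{B_{\vv_i}'},
\qquad w:=e-I_\De e.
\end{align*}
Inserting the definition~\eqref{mu kp1 def} of $\mu_{\vv_i}$ and applying Cauchy--Schwarz yields
\begin{align*}
\inpro{\cN^* e}{e}
\le
\brac{\sum_{\vv_i\in V_{\De'}\setminus V_{\De}} \nu_{\vv_i}'(w)^2 \norm{B_{\vv_i}'}{H^{1/2}(\mS)}^2}^{1/2}
\brac{\sum_{\vv_i\in V_{\De'}\setminus V_{\De}} \mu_{\vv_i}^2}^{1/2}.
\end{align*}

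Third, the decisive step is to dominate the first factor by $C\,\norm{e}{H^{1/2}(\mS)}^2$. I would split this into two sub-bounds: (i) a Riesz-type stability estimate $\sum \nu_{\vv_i}'(w)^2\,\norm{B_{\vv_i}'}{H^{1/2}(\mS)}^2\le C_1\,\norm{w}{H^{1/2}(\mS)}^2$ for the nodal expansion of $w\in S(\De')$, using Lemma~\ref{BiX lem} to control $\norm{B_{\vv_i}'}{H^{1/2}(\mS)}$ by $h_{\tau_i}^{1/2}$ combined with local scaling arguments and the finite-overlap bound~\eqref{Tvi bounded} of the vertex stars; and (ii) a boundedness estimate $\norm{w}{H^{1/2}(\mS)}=\norm{e-I_\De e}{H^{1/2}(\mS)}\le C_2\,\norm{e}{H^{1/2}(\mS)}$, i.e.\ $H^{1/2}$-stability of the Lagrange-type projection $I_\De$ restricted to the finite-dimensional discrete space $S(\De')$. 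Chaining the four stages gives~\eqref{HyMul 2a} with $\al_{10}=C_1 C_2^2/(\al_1(1-\eta)^2)$, which depends only on the shape-regularity constants of $\De,\De'$ and on $\eta$, as claimed.

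The hard part is sub-step (i). Riesz stability of the nodal hat-function basis is straightforward in $L_2(\mS)$ (orthogonality up to overlap) and in $H^1(\mS)$ (by direct gradient estimates on each spherical triangle), but the fractional-order $H^{1/2}(\mS)$ case sits in between and cannot be handled by purely local $L_2$-orthogonality. I expect to close this gap by interpolating the $L_2$ and $H^1$ bounds via~\eqref{equ:H 1 over 2}, combined with an inverse estimate on $S(\De')$ to convert local $H^1$ control on each hat function to the $H^{1/2}$ scaling $\norm{B_{\vv_i}'}{H^{1/2}(\mS)}\asymp h_{\tau_i}^{1/2}$ already established in Lemma~\ref{BiX lem}, and with the finite-overlap property to sum local contributions into the global norm.
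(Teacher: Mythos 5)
Your first two stages coincide with the paper's proof: the saturation/triangle-inequality reduction to $e=u_{\De'}-u_{\De}$, the coercivity bound $\al_1\norm{e}{H^{1/2}(\mS)}^2\le\inpro{\cN^*e}{e}$, and the expansion of $\inpro{\cN^*e}{e}$ over the new vertices via Lemma~\ref{l:e uDe De} are exactly what the paper does, and your Cauchy--Schwarz split producing the factor $\sum_{\vv_i}\nu_{\vv_i}'(w)^2\norm{B_{\vv_i}'}{H^{1/2}(\mS)}^2$ is legitimate. The genuine gap is in your third stage: both ingredients you need there, (i) the Riesz-type stability $\sum_{\vv_i\in V_{\De'}\setminus V_{\De}}\nu_{\vv_i}'(w)^2\norm{B_{\vv_i}'}{H^{1/2}(\mS)}^2\le C_1\norm{w}{H^{1/2}(\mS)}^2$ of the fine nodal decomposition of the hierarchical surplus, and (ii) the $H^{1/2}$-stability of the Lagrange interpolant $I_{\De}$ on $S(\De')$, are left as assertions, and the mechanism you propose to prove them (interpolating the $L_2$ and $H^1$ endpoint bounds) does not go through as stated. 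Point evaluation $w\mapsto w(\vv_i)$ and the nodal interpolant $I_{\De}$ are not bounded on $L_2(\mS)$, so operator interpolation on the continuous scale is unavailable; restricting to the finite-dimensional spaces, the endpoint bounds do hold (uniformly, by one-level two-mesh arguments), but to conclude the $H^{1/2}$ case you would need the uniform equivalence of the discrete interpolation norm $[L_2,H^1]_{1/2}$ on these subspaces with the $H^{1/2}(\mS)$-norm, which is a nontrivial result that neither your sketch nor the paper's toolbox supplies. In short, the ``hard part'' you flag is precisely the part that remains unproved, so as written the argument is incomplete (it could be completed, but only by importing two-level stability results for fractional-order norms from the BEM literature).

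The paper avoids these fractional-norm stability questions altogether: it bounds each coefficient individually, $\abs{\nu_{\vv_i}'(w)}\le C h_{\tau_i}^{-1}\norm{e-I_{\De}e}{L_2(\tau_i)}$, inserts the quasi-interpolant $Q_{\De}$ (not $I_{\De}$) and uses only local scaled $L_2$ estimates for $I_{\De}(Q_{\De}e-e)$ together with Lemma~\ref{l:uQu1}, whose fractional-order content was already established legitimately because $Q_{\De}$ is extended to all of $L_2(\mS)$; this yields $\abs{\nu_{\vv_i}'(w)}\le C h_{\tau_i}^{-1/2}\norm{e}{H^{1/2}(\Om_{\tau_i})}'$. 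Cauchy--Schwarz is then applied with the weights $h_{\tau_i}^{-1/2}$, the factor $h_{\tau_i}^{-1}$ is converted into $\norm{B_{\vv_i}'}{H^{1/2}(\mS)}^{-2}$ by Lemma~\ref{BiX lem}, and the local norms $\norm{e}{H^{1/2}(\Om_{\tau_i})}'$ are summed by the coloring argument of Lemma~\ref{l:coloring}. If you want to salvage your route, you should either prove (i) and (ii) by such localization arguments, or simply adopt the paper's coefficient-wise estimate, which replaces both of your unproven stability claims.
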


\begin{proof}
The triangle inequality and the saturation assumption~\eqref{satur assumption} 
give 
\begin{align}
\norm{u- u_{\De}}{H^{1/2}(\mS)}
&
\le 
\norm{u- u_{\De'}}{H^{1/2}(\mS)}
+
\norm{u_{\De'}- u_{\De}}{H^{1/2}(\mS)}
\notag
\\
&
\le 
\eta
\norm{u- u_{\De}}{H^{1/2}(\mS)}
+
\norm{u_{\De'}- u_{\De}}{H^{1/2}(\mS)}.
\notag
\end{align}
It follows that 
\begin{align}
\norm{u- u_{\De}}{H^{1/2}(\mS)}
&
\le 
(1-\eta)^{-1}
\norm{u_{\De'}- u_{\De}}{H^{1/2}(\mS)}.
\label{HyMul 2d}
\end{align}
Suppose that $e = u_{\De'} - u_{\De}$ and $w = e- I_{\De} e$ as defined in 
Lemma~\ref{l:e uDe De}. Then we have 
\begin{align}
\inpro{\cN^* e}{e}
=
\sum_{\vv_i\in V_{\De'}{\setminus}V_{\De}}
\nu_{\vv_i}'(w)
\inp{\cR(u_{\De})}{B_{\vv_i}'}.
\label{cNe e sum}
\end{align}
Applying Statement (4) in \cite[Proposition 5.1]{NeaSch04} and~\eqref{Atau htau 
regular}, there exists a constant $C_1>0$ 
depending only on the smallest angle in $\De'$ such that 
\begin{align}
\abs{\nu_{\vv_i}'(w_{})}
\le 
C_1 
h_{\tau_i}^{-1}
\norm{w_{}}{L^2(\tau_i)}
\label{HyMul 3a}
\end{align}
for every vertex $\vv_i\in V_{\De'}$ and for every $\tau_i\in \De'$. 
Using~\eqref{HyMul 3a} and the triangle inequality, we obtain  
\begin{align}
\abs{\nu_{\vv_i}'(w_{})} 
&
\le 
C_1 h_{\tau_i}^{-1}
\norm{e - I_{\De} e}{L^2(\tau_i)}
\notag
\\
&
\le 
C_1 h_{\tau_i}^{-1}
\brac{ 
\norm{e - Q_{\De} e}{L^2(\tau_i)}
+
\norm{Q_{\De}e - I_{\De} e}{L^2(\tau_i)}
}
\notag
\\
&
=
C_1 h_{\tau_i}^{-1}
\brac{ 
\norm{e - Q_{\De} e}{L^2(\tau_i)}
+
\norm{I_{\De}(Q_{\De}e - e)}{L^2(\tau_i)}
},
\label{18d}
\end{align}
noting that $Q_{\De}e = I_{\De}(Q_{\De}e)$. It follows from~\eqref{May18b}, 
\eqref{HyMul 1b} and the triangle inequality that
\begin{align}
\norm{I_{\De}(Q_{\De}e - e)}{L^2(\tau_i)}
&
=
\Big{\|}
\sum_{\vv_j\in V_{\De}}
(Q_{\De}e - e)(\vv_j)
B_{\vv_j}
\Big{\|}_{L^2(\tau_i)}
\notag
\\
&
=
\Big{\|}
\sum_{\vv_j\in V_{\De}
\atop 
\vv_j\in \tau_i}
\nu_{\vv_j}'
(Q_{\De}e - e)
B_{\vv_j}
\Big{\|}_{L^2(\tau_i)}
\notag
\\
&
\le 
\sum_{\vv_j\in V_{\De}
\atop 
\vv_j\in \tau_i}
\abs{\nu_{\vv_j}'
(Q_{\De}e - e)
}
\norm{B_{\vv_j}}{L^2(\tau_i)}.
\label{May18c}
\end{align}
Applying Proposition~5.1 (statement (4)) 
in~\cite{NeaSch04} and~\eqref{Atau htau regular} again,
we have
\begin{align}
\abs{\nu_{\vv_j}'
(Q_{\De}e - e)
}
\le 
C_1
h_{\tau_i}^{-1}
\norm{Q_{\De}e - e}{L^2(\tau_i)}.
\label{May18e}
\end{align}
Statement (5) in~\cite[Proposition~5.1]{NeaSch04} and~\eqref{Atau htau regular} 
give 
\begin{align}
\norm{B_{\vv_j}}{L^2(\tau_i)}
\le 
C_2 h_{\tau_i}
\label{May18f}
\end{align}
for some positive number $C_2>0$ depending only on the smallest angle of $\De'$.
The inequalities~\eqref{18d}--\eqref{May18f}
and the result in Lemma~\ref{l:uQu1}
yield
\begin{align}
\abs{\nu_{\vv_i}'(w)}
&
\le 
C_1 
\brac{ 
1
+
3 C_1 C_2 
}
h_{\tau_i}^{-1} 
\norm{e - Q_{\De} e}{L^2(\tau_i)}
\notag
\\
&
\le 
C_1 
\brac{ 
1
+
3 C_1 C_2 
}
\al_5 h_{\tau_i}^{-1/2} 
\norm{e_{}}{H^{1/2}(\Om_{\tau_i})}'
\notag
\\
&
= 
C_3 h_{\tau_i}^{-1/2} 
\norm{e_{}}{H^{1/2}(\Om_{\tau_i})}',
\label{HyMul 3c}
\end{align}
where $C_3 = C_1 
\brac{ 
1
+
3 C_1 C_2 
}
\al_5$.
It follows from~\eqref{cNe e sum}, the triangle inequality, \eqref{HyMul 3c} 
and the Cauchy--Schwarz 
inequality that
\begin{align}
\inp{\cN^* e_{}}{e_{}}
&
\le 
\sum_{\vv_i\in V_{\De'}{\setminus}V_{\De}}
\abs{\nu_{\vv_i}'(w_{})}
\abs{\inpro{\cR(u_{\De})}{B_{\vv_i}'}}
\notag
\\
&
\le 
\sum_{\vv_i\in V_{\De'}{\setminus}V_{\De}}
C_3
h_{\tau_i}^{-1/2} 
\norm{e_{}}{H^{1/2}(\Om_{\tau_i})}'
\abs{\inpro{\cR(u_{\De})}{B_{\vv_i}'}}
\notag
\\
&
\le 
C_3
\brac{
\sum_{\vv_i\in V_{\De'}{\setminus}V_{\De}}
\hspace{-0.3cm}
\norm{e_{}}{H^{1/2}(\Om_{\tau_i})}'^2
}^{1/2}
\brac{
\sum_{\vv_i\in V_{\De'}{\setminus}V_{\De}}
\hspace{-0.3cm}
h_{\tau_i}^{-1}
\abs{\inpro{\cR(u_{\De})}{B_{\vv_i}'}}^2
}^{1/2}.
\label{mul Nek ek}
\end{align}
Applying Lemma~\ref{BiX lem}, we obtain 
\begin{align}
\sum_{\vv_i\in V_{\De'}{\setminus}V_{\De}}
h_{\tau_i}^{-1}
\abs{\inpro{\cR(u_{\De})}{B_{\vv_i}'}}^2
&
\le
\al_9^2
\sum_{\vv_i\in V_{\De'}{\setminus}V_{\De}}
\frac{\abs{\inpro{\cR(u_{\De})}{B_{\vv_i}'}}^2}{\norm{B_{\vv_i}'}{H^{1/2}(\mS)}
^2} .
\label{mul Nek 2}
\end{align}
We note that each  $\tau_i$ can be chosen by at most three vertices (its 
vertices). 
Therefore, we have 
\begin{align}
\sum_{\vv_i\in V_{\De'}{\setminus}V_{\De}}
\norm{e_{}}{H^{1/2}(\Om_{\tau_i})}'^2
\le 
3
\sum_{\tau_i\in \De'}
\norm{e_{}}{H^{1/2}(\Om_{\tau_i})}'^2.
\label{vi k1 tau i}
\end{align}
By applying the result in Lemma~\ref{l:coloring}, we obtain 
\begin{align}
\sum_{\tau_i\in \De_{}'}
\norm{e_{}}{H^{1/2}(\Om_{\tau_i})}'^2
\le 
\al_7 
\norm{e}{H^{1/2}(\mS)}^2.
\label{mul taui Mek}
\end{align}
It follows from~\eqref{mul Nek ek}--\eqref{mul taui Mek}  
that 
\begin{align}
\inpro{\cN^* e_{}}{e_{}}
&
\le 
C_3 (3 \al_7)^{1/2} \al_9 
% \ga_2^{-1}
\norm{e_{}}{H^{1/2}(\mS)}
\brac{
\sum_{\vv_i\in V_{\De'}{\setminus}V_{\De}}
\frac{\abs{\inpro{\cR(u_{\De})}{B_{\vv_i}'}}^2}{\norm{B_{\vv_i}'}{H^{1/2}(\mS)}
^2}
}^{1/2}.
\notag
\end{align}
This together with~\eqref{auv def}, \eqref{equ:bi li nor} and~\eqref{mu kp1 
def} 
yields 
\begin{align}
\norm{e_{}}{H^{1/2}(\mS)}
&
\le 
\al_1^{-1} 
C_3 (3 \al_7)^{1/2} \al_9  
\brac{
\sum_{\vv_i\in V_{\De'}{\setminus}V_{\De}}
\brac{\mu_{\vv_i}}^2
}^{1/2}.
\notag
\end{align}
Noting that $e = u_{\De'} - u_{\De}$ and~\eqref{HyMul 2d} we obtain 
\begin{align}
\norm{u - u_{\De}}{H^{1/2}(\mS)}
&
\le 
(1-\eta)^{-1} 
\al_1^{-1} 
C_3 (3 \al_7)^{1/2} \al_9  
\brac{
\sum_{\vv_i\in V_{\De'}{\setminus}V_{\De}}
\brac{\mu_{\vv_i}}^2
}^{1/2}.
\notag
\end{align}
The desired inequality~\eqref{HyMul 2a}
can then be obtained by denoting
\[
\al_{10}
= 
\brac{
(1-\eta)^{-1} 
\al_1^{-1} 
C_3 (3 \al_7)^{1/2} \al_9  
}^{1/2},
\]
completing the proof of the theorem.
\end{proof}

In Theorem~\ref{t:main satur}, the error 
$\norm{u-u_{\De}}{H^{1/2}(\mS)}$ is bounded above by the 
sum of nodal
estimators. For refinement purpose, the a posteriori error estimate can also be 
written
in the the form of element estimators as in the following corollary.

\begin{corollary}\label{c:Hier est}
Let all assumptions in Theorem~\ref{t:main satur} be satisfied. Then there holds
\begin{align}
\norm{u-u_{\De}}{H^{1/2}(\mS)}^2 
\le 
\al_{10}
\sum_{\tau\in \De}
\theta_{\De}(\tau)^2,
\label{hier est tau}
\end{align}
where 
\begin{equation}\label{local err est num}
\theta_{\De}(\tau)^2
=
\sum_{\vv\in V_{\De'}{\setminus}V_{\De}
\atop\vv\in \tau}
\mu_{\vv}^2.
\end{equation}
\end{corollary}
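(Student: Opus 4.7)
The plan is to derive the corollary directly from Theorem~\ref{t:main satur} by reorganizing the sum over new vertices $\vv_i\in V_{\De'}\setminus V_{\De}$ into a sum over the triangles $\tau\in\De$ that contain them. Since every spherical triangle in $\De'$ is a subtriangle of some $\tau\in\De$, every vertex $\vv\in V_{\De'}\setminus V_{\De}$ is contained in the closure of at least one $\tau\in\De$. Hence the quantities $\mu_{\vv}$ from~\eqref{mu kp1 def} can all be accounted for by walking over the elements of the coarse mesh $\De$.

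First I would recall the bound
\[
\norm{u - u_{\De}}{H^{1/2}(\mS)}^2
\le
\al_{10}
\sum_{\vv_i\in V_{\De'}\setminus V_{\De}}\mu_{\vv_i}^2
\]
from Theorem~\ref{t:main satur}. Next I would reindex the right-hand side by triangles: for each $\vv\in V_{\De'}\setminus V_{\De}$, let
\[
N(\vv) := \cardd\sett{\tau\in\De : \vv\in\tau}\ge 1,
\]
and observe that
\[
\sum_{\vv\in V_{\De'}\setminus V_{\De}}\mu_{\vv}^2
=
\sum_{\vv\in V_{\De'}\setminus V_{\De}}\frac{1}{N(\vv)}
\sum_{\tau\in\De,\ \vv\in\tau}\mu_{\vv}^2
\le
\sum_{\tau\in\De}\sum_{\vv\in V_{\De'}\setminus V_{\De},\ \vv\in\tau}\mu_{\vv}^2
=
\sum_{\tau\in\De}\theta_{\De}(\tau)^2,
\]
where the inequality uses $N(\vv)\ge 1$ and the last equality is the definition~\eqref{local err est num} of $\theta_{\De}(\tau)$. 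Combining the two displays yields~\eqref{hier est tau}.

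There is essentially no analytic obstacle here: the assertion is a purely combinatorial rearrangement of Theorem~\ref{t:main satur}, so the main thing to be careful about is the treatment of new vertices lying on an edge shared by two coarse triangles (such a vertex contributes to $\theta_{\De}(\tau)^2$ for both adjacent $\tau$, which only strengthens the upper bound and is therefore harmless). In particular, the constant $\al_{10}$ is preserved without any additional factor depending on the regularity of $\De$, because we are bounding the vertex sum from above by the triangle sum rather than in the reverse direction.
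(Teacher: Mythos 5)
Your proposal is correct and is precisely the immediate combinatorial rearrangement the paper has in mind (the paper states the corollary without proof, treating it as a direct consequence of Theorem~\ref{t:main satur}): since every $\vv\in V_{\De'}\setminus V_{\De}$ lies in at least one $\tau\in\De$, the vertex sum is bounded above by $\sum_{\tau\in\De}\theta_{\De}(\tau)^2$, with possible double-counting of edge midpoints only strengthening the bound. Nothing further is needed.
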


\section{Mesh Refinement}\label{s:mesh ref}
In this section, 
we briefly discuss the mesh refinement technique that will be used to refine 
our spherical triangulations. The technique is based on the a posteriori error 
estimates proved in 
Theorems~\ref{t:upp bound} and~\ref{t:main satur}, and Corollary~\ref{c:Hier 
est}. 
Borrowing existing ideas in planar 
cases, see e.g. 
\cite{Bnsch1991,Binev2004,Carstensen2004,CASCON2007,Nochetto2009,Rivara1984-3,
Verfrth1994}, our
mesh refinement algorithms 
consist of two subroutines. One is 
constructing the indicators from the 
error estimators. The other is defining the 
rules that are used to divide 
the triangles.
Here, 
indicator constructions are different for the two adaptive approaches which 
are based on the residual and the hierarchical estimates. Meanwhile,
we use the same rule to divide the triangles for  both adaptive procedures. 

\vspace{0.2cm}

\textbf{Residual adaptive approach:}
Starting with a spherical triangulation $\De_k$,
we denote by $\wth{\De}_k$ the subset of $\De_k$  
containing all spherical triangles that will be refined.
This can be achieved 
with the following marking strategy (see \cite{Drfler1996}):

\vspace{0.2cm}

\noindent\textbf{Strategy:}
\textit{Given a parameter $0<\xi<1$, construct 
a minimal subset $\wth\De_k$ of $\De_k$ such 
that
\begin{align}
\sum_{\tau \in \wth\De_k} 
\eta_{\De_k,1/2}(\tau)^2
\ge
\xi^2\, 
\sum_{\tau \in \De_k}
\eta_{\De_k,1/2}(\tau)^2,
\notag
\end{align}
and mark all spherical triangles in 
$\wth\De_k$ for refinement. 
Here, recall that $\eta_{\De_k,1/2}(\tau)$ is defined by~\eqref{etaXtau}.
}

\vspace{0.2cm}

\textbf{Hierarchical adaptive approach:}
Starting with a spherical triangulation $\De_k$,
we denote by $\De_k'$ the finer mesh of $\De_k$ which is created by joining the 
midpoints of the three edges of all triangles in $\De_k$, see 
Figure~\ref{f:divide 2}. 
Note here that we only need 
the vertices of $\De_k'$ in order to compute the nodal estimators
\[
\mu_{\vv}, 
\quad 
\vv\in V_{\De_k'},
\]
see~\eqref{mu kp1 def}.
The mesh $\De_k'$ is not at all the finer mesh that we use to create 
approximation spaces.
For each $\tau$ in $\De_k$, the local error estimator is computed by 
\begin{equation*}
\theta_{\De_k}(\tau)^2
=
\sum_{\vv\in V_{\De_k'}{\setminus}V_{\De_k}
\atop\vv\in \tau}
\mu_{\vv}^2,
\end{equation*}
see~\eqref{local err est num}.
The subset  $\wth \De_k$ of spherical triangles in $\De_k$ which will be 
marked for refinement is determined by applying the above strategy:

\vspace{0.2cm}

\textit{Given a parameter $0<\xi<1$, construct 
a minimal subset $\wth \De_k$ of $\De_k$ such 
that
\begin{align}
\sum_{\tau \in \wth\De_k} 
\theta_{\De_k}(\tau)^2
\ge
\xi^2\, 
\sum_{\tau \in \De_k}
\theta_{\De_k}(\tau)^2,
\notag
\end{align}
and mark all spherical triangles in 
$\wth \De_k$ for refinement.
}

\vspace{0.2cm}

Once, the subset $\wth\De_k$ of spherical triangles in $\De_k$ that are to be 
divided is obtained, 
mesh refinement techniques are then applied.
When it comes to the mesh refinement, algorithms 
for cutting triangles in  triangulations have 
been extensively discussed 
in~\cite{Rivara1984-3}. These algorithms are 
based 
on the 
bisection of triangles by dividing the longest 
edges so that the following features are 
satisfied.
Let $\De_k$ be a conforming triangulation, 
i.e. the 
intersection of two non-disjoint, nonidentical 
triangles is either a common vertex or common 
edge.
With any refinement submesh $\wth{\De}_k\in 
\De_k$, the algorithm produces a new conforming 
triangulation $\De_{k+1}$ with the following 
properties:

\begin{enumerate}[(i)]
\item  
all elements of $\wth{\De}_k$ 
are refined to create new elements in 
$\De_{k+1}$,
\item
$\De_{k+1}$ is nested in 
$\De_k$ in such a way that each refined triangle 
is 
embedded in one triangle of $\De_k$,
\item
$\De_{k+1}$ is 
non-degenerated, i.e. the interior angles of all 
triangles of $\De_{k+1}$ are guaranteed to be 
bounded away from 0,
\item
the transition between large 
and small triangles is not abrupt.
\end{enumerate}

Following~\cite{Verfrth1994}, 
the below steps are used to 
 produce a 
totally refined and conforming triangulation 
$\De_{k+1}$ in the following way:

\begin{itemize}
\item [Step 1:] 
Separate all $\tau$ in 
$\wth{\De}_k$ into 4 pieces to obtain $\wtd{\De_k}$,
see Figure~\ref{triangle11}(a).
\item[Step 2:] 
Find all hanging nodes in 
$\wtd{\De_k}$ and verify if 
each of these hanging nodes lies 
on the longest edge of  a triangle or not. 
\begin{itemize}
\item 
If the hanging node 
lies on the longest edge, join it with the 
opposite vertex to obtain 2 new triangles, see 
Figure~\ref{triangle11}(b).
\item 
If the hanging node 
does not lie on the longest edge, join 
it with the middle point of the longest edge, 
together with joining the middle point of the 
longest edge with its opposite vertex to obtain 3 
new triangles, see Figure~\ref{triangle11}(c).
\end{itemize}
\end{itemize}
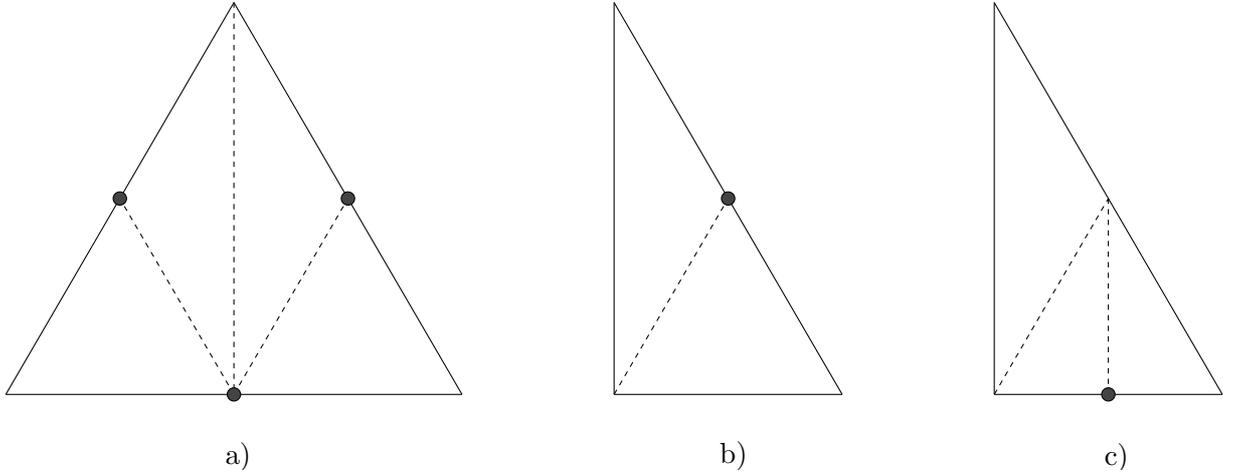
\begin{figure}
	\centering
\definecolor{uuuuuu}{rgb}{0.26666666666666666,
0.26666666666666666,0.26666666666666666}
\begin{tikzpicture}
% [line cap=round,line 
% join=round,>=triangle 45,x=0.75cm,y=0.75cm]
\clip(0.6,-1.) rectangle (17.1,5.6);
\draw (4.,5.196152422706632)-- (1.,0.);
\draw (4.,5.196152422706632)-- (7.,0.);
\draw (7.,0.)-- (1.,0.);
\draw [dash pattern=on 2pt off 2pt] 
(4.,5.196152422706632)-- (4.,0.);
\draw (9.,0.)-- (9.,5.196152422706631);
\draw (9.,5.196152422706631)-- (12.,0.);
\draw (12.,0.)-- (9.,0.);
\draw [dash pattern=on 2pt off 2pt] (9.,0.)-- 
(10.5,2.5980762113533156);
\draw (14.,5.19615242270663)-- (17.,0.);
\draw (14.,0.)-- (17.,0.);
\draw [dash pattern=on 2pt off 2pt] (14.,0.)-- 
(15.5,2.598076211353315);
\draw [dash pattern=on 2pt off 2pt] 
(15.5,2.598076211353315)-- (15.5,0.);
\draw (14.,5.19615242270663)-- (14.,0.);
\draw [dash pattern=on 2pt off 2pt] (4.,0.)-- 
(5.5,2.598076211353316);
\draw [dash pattern=on 2pt off 2pt] (4.,0.)-- 
(2.5,2.598076211353316);
\draw (3.751727854150801,-0.5047814587725695) 
node[anchor=north west] {a)};
\draw (10.252109736068247,-0.4802053457974563) 
node[anchor=north west] {b)};
\draw (15.314789008941569,-0.5170695152601261) 
node[anchor=north west] {c)};
\begin{scriptsize}
\draw [fill=uuuuuu] (4.,0.) circle (2.5pt);
\draw [fill=uuuuuu] (10.5,2.5980762113533156) 
circle (2.5pt);
\draw [fill=uuuuuu] (15.5,0.) circle (2.5pt);
\draw [fill=uuuuuu] (5.5,2.598076211353316) 
circle 
(2.5pt);
\draw [fill=uuuuuu] (2.5,2.598076211353316) 
circle 
(2.5pt);
\end{scriptsize}
\end{tikzpicture}
\caption{Possible cases of refined 
triangles}\label{triangle11}
\end{figure}

\section{Numerical Experiments}
\label{s:num ex}
We consider the exterior Neumann problem

\begin{equation}\label{Neu prob num exp}
\begin{aligned}
\De U(\vecx)
&
= 0 
\quad 
\text{for all } 
\abs{\vecx}>1,
\\
\frac{\partial U(\vecx)}{\partial\nu}
&
=
Z_N(\vecx)
\quad 
\text{for all }
\vecx\in \mS,
\\
U(\vecx)
&
=
\cO\brac{\abs{\vecx}^{-1}}
\quad 
\text{when }
\abs{\vecx}\goto \infty,
\end{aligned}
\end{equation}
 where the boundary data $Z_N$ is one of the following functions
\begin{align}\label{zn ex}
Z_1(\vx)
=
\frac{\vp \cdot \vx-1}{\snorm{\vx-\vp}{}^3}
-
1
% \quad
% \text{with }
% \vp=(0,0,0.95),\,
\end{align}
and 
\begin{align}\label{zn ex2}
Z_2(\vx)
=
\frac{\vp \cdot \vx-1}{\snorm{\vx-\vp}{}^3}
-
\frac{\vq \cdot \vx-1}{\snorm{\vx-\vq}{}^3},
\end{align}
where 
$\vp=(0,0,0.95)$ and $\vq = (0,0,-0.95)$. Solving the problem~\eqref{Neu prob 
num exp} is equivalent to solving the hypersingular integral equation
\begin{align}\label{hyper ex}
-Nu+\int_{\mS}u\,d\sigma = f
\quad
\text{on}
\quad
\mS,
\end{align}
see e.g.~\cite{Steinbach08, TraLeGSloSte09a}.
Here, the right hand side $f$ of~\eqref{hyper 
ex} is given by
\begin{align}\label{f = zn_1}
f_k(\vx)
=
\frac{1}{2} Z_k(\vx)
+D^* Z_k(\vx), 
\quad 
\vx\in \mS,
\end{align}
for $k=1,2$, 
and
the operator $D^*$ is defined by 
\[
D^* v(\vecx)
=
\int_{\mS}
\frac{\partial}{\partial\nu_{\vecx}}
\frac{1}{\abs{\vecx-\vecy}}
v(\vecy)\,d\sigma_{\vecy},
\quad 
\vecx\in \mS,
\]
see~\cite[page 122]{Ned00}.
The exact solution of the exterior Neumann problem~\eqref{Neu prob num exp} is
\begin{align}\label{exact U}
U_1(\vx) 
=
\frac{1}{\snorm{\vx-\vp}{}}
-
\frac{1}{\abs{\vx}}
\quad 
\text{and}
\quad
U_2(\vx)
=
\frac{1}{\snorm{\vx-\vp}{}}
-
\frac{1}{\snorm{\vx-\vq}{}},
\quad 
\abs{\vx} > 1.
\notag
\end{align}
and the exact solution to the hypersingular integral equation~\eqref{hyper ex} 
is given by
\begin{align}
u_1(\vecx)
=
\frac{1}{\snorm{\vx-\vp}{}}
-
1
\quad
\text{and}
\quad 
u_2(\vx) 
=
\frac{1}{\snorm{\vx-\vp}{}}
-
\frac{1}{\snorm{\vx-\vq}{}},
\quad 
\vx\in\mS.
\end{align}

We solve~\eqref{hyper ex} 
by 
using the Galerkin method with 
$S(\De)$, the space of continuous {piecewise} linear spherical splines.
Here, {the spherical triangulations $\De$ are 
obtained in three different ways: uniform,
residual and hierarchical
adaptive mesh refinements.}
For experimental purposes, we 
start with an initial triangulation of 
eight equal spherical triangles with six 
nodes (two at the poles and four
on the equator). 
For the uniform meshes,
every further 
refinement consists of partitioning every
spherical triangle into four smaller spherical 
triangles by joining the midpoints of the edges, {see Figure~\ref{f:divide 
2}.} 
This 
guarantees that all triangles in the spherical 
triangulations obtained after refinements are of
a finite 
number of similarly distinct triangles.
{For the residual and hierarchical adaptive meshes, we apply the 
strategies in Section~\ref{s:mesh ref} to refine 
the 
meshes 
after estimating the element  errors, 
$\eta_{\De,1/2}(\tau)$ and $\theta_{\De}(\tau)$, 
see~\eqref{etaXtau} 
and~\eqref{local err est num}, respectively. 
}

\vspace{0.5cm}

\begin{figure}[h]
\begin{center}
\begin{tikzpicture}[xscale=1.5, yscale=1.5]

%%%%%%%%%%%%%%%%%%%%%
\draw (0,0)--(3,0)--(1.5,2.6)--(0,0);

\draw [fill](2.25,1.3) circle (0.05cm);
\draw [fill](0.75,1.3) circle (0.05cm);
\draw [fill](1.5,0) circle (0.05cm);

\draw[->] (3,1.3)--(4,1.3);
%%%%%%%%%%%%%%%%%%%%%
\draw (4,0)--(7,0)--(1.5+4,2.6)--(4,0);

\draw (2.25+4,1.3)--(0.75+4,1.3)--(1.5+4,0)--(2.25+4,1.3);

\draw [fill](2.25+4,1.3) circle (0.05cm);
\draw [fill](0.75+4,1.3) circle (0.05cm);
\draw [fill](1.5+4,0) circle (0.05cm);
%%%%%%%%%%%%%%%%%%%%%

\end{tikzpicture}
\end{center}
\caption{Uniform mesh refinement}
\label{f:divide 2}
\end{figure}
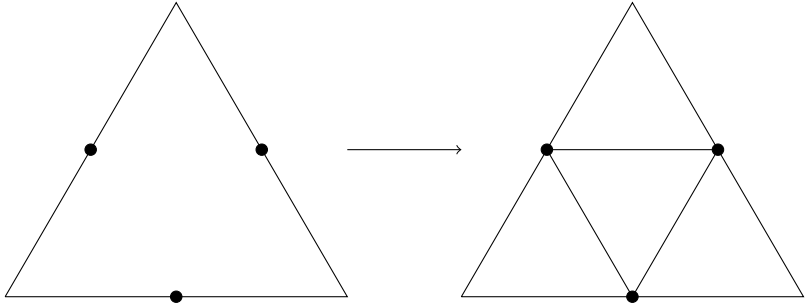

\vspace{0.5cm}

Suppose that 
$V_{\De} = \sett{\vecv_1, \ldots, \vecv_M}$ is the set of all vertices in the 
spherical triangulation~$\De$. 
We choose a basis for $S(\De)$ to be the set 
\[
\sett{B_{\vecv_i}: i = 1,\ldots, M},
\]
where $B_{\vecv_i}$ is the basis function associated with the vertex $\vecv_i$, 
see~\eqref{Bvi def}. 
We denote by $u_{\De}\in S(\De)$ the Galerkin solution to~\eqref{hyper ex}. 
Then 
$u_{\De}  = \sum_{i=1}^M \nu_i B_{\vecv_i}$,
where $\nu_i\in \R$ for $i=1,\ldots,M$, satisfies
\[
a(u_{\De}, B_{\vecv_j}) 
=
\inpro{f}{B_{\vecv_j}},
\quad 
j=1,\ldots, M.
\]
This results in the following  matrix equation 
\begin{equation}\label{mat equ}
\bsb{A} \bsb{\nu} = \bsb{F}.
\end{equation}
The entry $A_{ij}$, for $i,j = 1,\ldots, M$, of 
the stiffness 
matrix $\bsb A$ is computed by
\begin{align}
A_{ij}
&=
-\frac{1}{4 \pi}
\int_{\mS}(N B_{\vv_i})(\vx) B_{\vv_j}(\vx)
d\sigma_{\vx}
d\sigma_{\vy}
+
\int_{\mS}B_{\vv_i}(\vx)
d\sigma_{\vx}
\int_{\mS} B_{\vv_j}(\vy)
d\sigma_{\vy}.
\label{Aij hyp ex}
\end{align}
The first integral in~\eqref{Aij hyp ex} is computed by 
\begin{equation}\label{equ:dou int stif}
\begin{aligned}
-\int_{\mS} (N B_{\vecv_i})(\vecx) B_{\vecv_j}(\vecx) \,d\sigma_{\vecx} & = 
\frac{1}{4\pi} \, \int_{\mS}
\int_{\mS} \frac{\overrightarrow{\curl}_{\mS} B_{\vecv_i}(\vecx)\cdot
\overrightarrow{\curl}_{\mS} B_{\vecv_j}(\vecy)}{\snorm{\vecx -\vecy}{}}
\,d\sigma_{\vecx}\,d\sigma_{\vecy}
\\
&
=
\frac{1}{4\pi}
\sum_{\tau\in\Delta}
\sum_{\tau'\in\Delta}
\int_{\tau}
\int_{\tau'}
\frac{\overrightarrow{\curl}_{\mS} B_{\vecv_i}(\vecx)\cdot
\overrightarrow{\curl}_{\mS} B_{\vecv_j}(\vecy)}{\snorm{\vecx -\vecy}{×}}
\,d\sigma_{\vecx}\,d\sigma_{\vecy},
\end{aligned}
\end{equation}
see~\cite[Theorem 
3.3.2]{Ned00}.
Here, $\overrightarrow{\rm{curl}}_\mS v$ is the vectorial
surface
rotation defined by
\[
\overrightarrow{\rm{curl}}_\mS v
=
-
\frac{\partial v}{\partial \theta}
\overrightarrow{e_\varphi}
+
\frac{1}{\sin\theta}
\frac{\partial v}{\partial \varphi}
\overrightarrow{e_\theta},
\]
where $\overrightarrow{e_\varphi}$, $\overrightarrow{e_\theta}$
are the two unit vectors corresponding to the Euler angles.
{
Computation of the double integrals in~\eqref{equ:dou int stif}
requires evaluation of integrals of the type
\begin{equation}\label{iint tau1 tau2}
\int_{\tau^{(1)}}
\int_{\tau^{(2)}}
\frac{f_1(\vecx)\, f_2(\vecy)}{\snorm{\vecx - \vecy}{}}
\,d\sigma_{\vecx}
d\sigma_{\vecy},
\end{equation}
where $\tau^{(1)}$ and $\tau^{(2)}$ are spherical triangles
in $\De$ and the functions $f_1$ and $f_2$ are analytic
for all $\vecx\in \tau^{(1)}$ and $\vecy\in \tau^{(2)}$.
For more details about the above evaluation, please refer to
\cite{PhamTran12,PhamTranChernov11}.
}

The right hand side $\bsb F$ of 
the linear system~\eqref{mat equ} has entries given by
\begin{align*}
F_i
&
=
\int_{\mS} 
B_{\vecv_i}(\vx)\,f(\vx) d\sigma_{\vx} 
=
\frac{1}{2}
\int_{\mS} 
B_{\vecv_i}(\vx)\,Z_N(\vx) d\sigma_{\vx} 
+
\frac{1}{2}
\int_{\mS} 
B_{\vecv_i}(\vx)\,(D^*Z_N)(\vx) d\sigma_{\vx},
\end{align*}
for all $i = 1,\ldots, M$.
Once solving the matrix equation~\eqref{mat equ}, 
we obtain the coefficient vector $\bsb\nu = 
(\nu_1,\ldots, \nu_M)$ and thus the approximate 
solution $u_{\De} = \sum_{i=1}^M \nu_i B_{\vecv_i}$. The 
error $\norm{u-u_{\De}}{H^{1/2}(\mS)}$ is then computed 
by 
\begin{align}
\norm{u-u_{\De}}{H^{1/2}(\mS)}^2 
&
\simeq 
a(u-u_{\De}, u-u_{\De}) 
=
a(u-u_{\De}, u) 
\notag
\\
&
=
a(u,u) - a(u,u_{\De}) 
=
\inpro{f}{u}
-
\inpro{f}{u_{\De}},
\notag
\end{align}
noting~\eqref{auv cont}--\eqref{GalSol1}.

{
\begin{table}[h]
\centering
\caption{Errors vs degrees of freedom  for $f_1$}
\label{tabEx1}

\vspace{0.2cm}

\begin{tabular}{|c|c|c|c|c|c|}
\hline 
\multicolumn{2}{|c|}{} 
& 
\multicolumn{2}{|c|}{}
& 
\multicolumn{2}{|c|}{} 
\\[-1em]
\multicolumn{2}{|c|}{Uniform} 
& 
\multicolumn{2}{|c|}{Residual} 
&
\multicolumn{2}{|c|}{hierarchical} 
\\[0.1em]
\hline
\rule{0pt}{15pt}
DoFs &    Error &    
DoFs &    Error
&    DoFs &       Error \\
\hline
\rule{0pt}{15pt}
6   & 0.77566 
&
6   & 0.77566 
&
6  & 0.77566 
\\
18  & 0.38229 
&
26  & 0.43544
&
14  & 0.68900 
\\
66  & 0.16686 
&
78  & 0.07714
&
95  & 0.18822 
\\
258 & 0.09537 
&
102 & 0.04493
&
119  & 0.07424 
\\
1026& 0.05792 
&
128 & 0.03864
&
141  & 0.04222 
\\
4098& 0.03564  
&
211 & 0.03495 
&
170 & 0.03574  
\\ 
\hline
\end{tabular}
\end{table}
}

{
\begin{table}[h]
\centering
\caption{Degrees of freedom and accumulating computation time  for $f_1$}
\label{tabEx1a}

\vspace{0.2cm}

\begin{tabular}{|c|c|c|c|c|c|}
\hline 
\multicolumn{2}{|c|}{} 
& 
\multicolumn{2}{|c|}{}
& 
\multicolumn{2}{|c|}{} 
\\[-1em]
\multicolumn{2}{|c|}{Uniform} 
& 
\multicolumn{2}{|c|}{Residual} 
&
\multicolumn{2}{|c|}{hierarchical} 
\\[0.1em]
\hline
\rule{0pt}{15pt}
DoFs &    Comp. time 
&    
DoFs &    Comp. time
&    
DoFs &    Comp. time
\\
\hline
\rule{0pt}{15pt}
6   & 1.58  
&
6   & 1.58  
&
6  & 2.54  
\\
18  & 7.09 
&
26  & 11.07 
&
14  & 9.60 
\\
66  & 30.12  
&
78  & 53.41 
&
95  & 125.18 
\\
258 & 192.91 
&
102 & 91.39
&
119  & 245.08 
\\
1026 & 2654.11 
&
128 & 144.25 
&
141  & 401.22 
\\
4098& 38754.89 %28650*2654/1962%  
&
211 & 259.89 
&
170 & 612.70 
\\ 
\hline
\end{tabular}
\end{table}
}

\begin{table}[h]
\centering
\caption{Errors vs degrees of freedom  for $f_2$}
\label{tabEx2}

\vspace{0.2cm}

\begin{tabular}{|c|c|c|c|c|c|}
\hline 
\multicolumn{2}{|c|}{} 
& 
\multicolumn{2}{|c|}{}
& 
\multicolumn{2}{|c|}{} 
\\[-1em]
\multicolumn{2}{|c|}{Uniform} 
& 
\multicolumn{2}{|c|}{Residual} 
&
\multicolumn{2}{|c|}{hierarchical} 
\\[0.1em]
\hline
\rule{0pt}{15pt}
DoFs &    Error &    
DoFs &    Error
&    DoFs &       Error \\
\hline
\rule{0pt}{15pt}
6   & 0.78050  
&
6   & 0.78050  
&
6   & 0.78050  
\\
18  & 0.36153 
&
40 & 0.38340 
&
54 & 0.38262  
\\
66  & 0.15705
&
151 & 0.06762 
&
153 & 0.16873 
\\
258 & 0.09356 
&
199 & 0.04232 
&
199 & 0.06693 
\\
1026& 0.05826 
&
253 & 0.03668 
&
247 & 0.04151
\\
4098& 0.03682  
&
448 & 0.03269
&
302 & 0.03606
\\ 
\hline
\end{tabular}
\end{table}

\begin{table}[h]
\centering
\caption{Degrees of freedom and accumulating computation time  for $f_2$}
\label{tabEx2a}

\vspace{0.2cm}

\begin{tabular}{|c|c|c|c|c|c|}
\hline 
\multicolumn{2}{|c|}{} 
& 
\multicolumn{2}{|c|}{}
& 
\multicolumn{2}{|c|}{} 
\\[-1em]
\multicolumn{2}{|c|}{Uniform} 
& 
\multicolumn{2}{|c|}{Residual} 
&
\multicolumn{2}{|c|}{hierarchical} 
\\[0.1em]
\hline
\rule{0pt}{15pt}
DoFs &    Comp. time 
&    
DoFs &    Comp. time
&    
DoFs &    Comp. time
\\
\hline
\rule{0pt}{15pt}
6   & 1.67  
&
6   & 2.01  
&
6  & 3.68  
\\
18  & 7.49 
&
40  & 27.59 
&
54  & 88.60 
\\
66  & 31.44  
&
151  & 176.24 
&
153  & 346.21 
\\
258 & 184.11 
&
199 & 311.59 
&
199  & 722.11 
\\
1026 & 2421.76 
&
253 & 509.00 
&
247  & 1242.09 
\\
4098 &  35351.71 %38754*2421/2654
&
448 & 1051.12 
&
302  & 1968.70 
\\ 
\hline
\end{tabular}
\end{table}

We solve \eqref{hyper ex} by using  uniform, residual and hierarchical
adaptive refinements for 
the 
right hand sides $f_1$ and $f_2$ being
defined by \eqref{f = zn_1}. For both examples, we 
find approximate solutions, compute
the errors, degrees of freedom and accumulating computation time, see 
Tables~\ref{tabEx1}--\ref{tabEx2a}.
{We note here that the convergence rates  of the uniform refinement method
for both $f_1$ and $f_2$ are slightly smaller than theoretical results. The 
errors behave roughly 
$\mathcal{O}(M^{-1.24/2})$ instead of $\mathcal{O}(M^{-1.5/2})$ as suggested 
by~\eqref{a priori hyper}.
This may be due to the small number 
of uniform meshes that have have been used and the low number
of elements in these meshes.
}

The numerical results suggest significant advantages of the two adaptive
refinement approaches in terms of required degrees of freedom and accumulating 
computation time, see also Figures~\ref{f1:ErrDof}--\ref{f2:ErrTime}.
For example, to obtain an accuracy of around 3.5\% when solving~\eqref{hyper 
ex} for $f_1$, while the uniform refinement 
approach requires $4098$ degrees of freedom (see 
Figure~\ref{f:uniform4098}) 
and the corresponding computation time is almost $10.7$ hours, our 
residual and hierarchical adaptive 
refinement counterparts need only $211$ and $170$ vertices 
%(see Figure~\ref{f1:adaptives}) 
and it takes only more than $10$ minutes to 
complete 
the calculation, see Tables~\ref{tabEx1}--\ref{tabEx1a} and 
Figures~\ref{f1:ErrDof}--\ref{f1:ErrTime}. 
Similar advantages of the adaptive refinement approaches are also observed when 
solving~\eqref{hyper ex} for $f_2$ given by~\eqref{f = zn_1} and~\eqref{zn 
ex2}, see Tables~\ref{tabEx2}--\ref{tabEx2a} and 
Figures~\ref{f2:ErrDofs}--\ref{f2:ErrTime}. 
For example, to obtain an accuracy of $3.6\%$, uniform refinement method has to 
use the uniform mesh of $4098$ vertices and the calculation takes nearly $10$ 
hours to complete. Meanwhile, the residual adaptive method requires a mesh of 
$448$ nodes and the (accumulating) computation time is about $17.5$ minutes. 
The numbers for the hierarchical adaptive counterpart are $302$ nodes and 
$32.8$ minutes, respectively.

Figure~\ref{f1:adaptives} shows adaptive meshes obtained when we solve the 
equation~\eqref{hyper ex} with the right hand side $f_1$ by using the residual 
and hierarchical refinement 
approaches. 
Denser areas of nodes surrounding the north pole are observed. The spherical 
triangulations shown in 
Figures~\ref{f2:Re_adaptive448} and~\ref{f2:ReAdap302}
are the $448$-node and $302$-node meshes obtained when we solve~\eqref{hyper 
ex} with the right hand side $f_2$ by using the two adaptive methods. In these 
two figures, we witness denser areas surrounding the north and south poles.
These denser areas are due to the fact that their contributions to the total
errors are higher than other regions on the unit sphere, and thus must be 
accordingly refined as discussed in Section~\ref{s:mesh ref}.

\begin{figure}[h]
\begin{center}
\caption{Errors vs DoFs for $f_1$}
\label{f1:ErrDof}
\includegraphics[width=8cm]{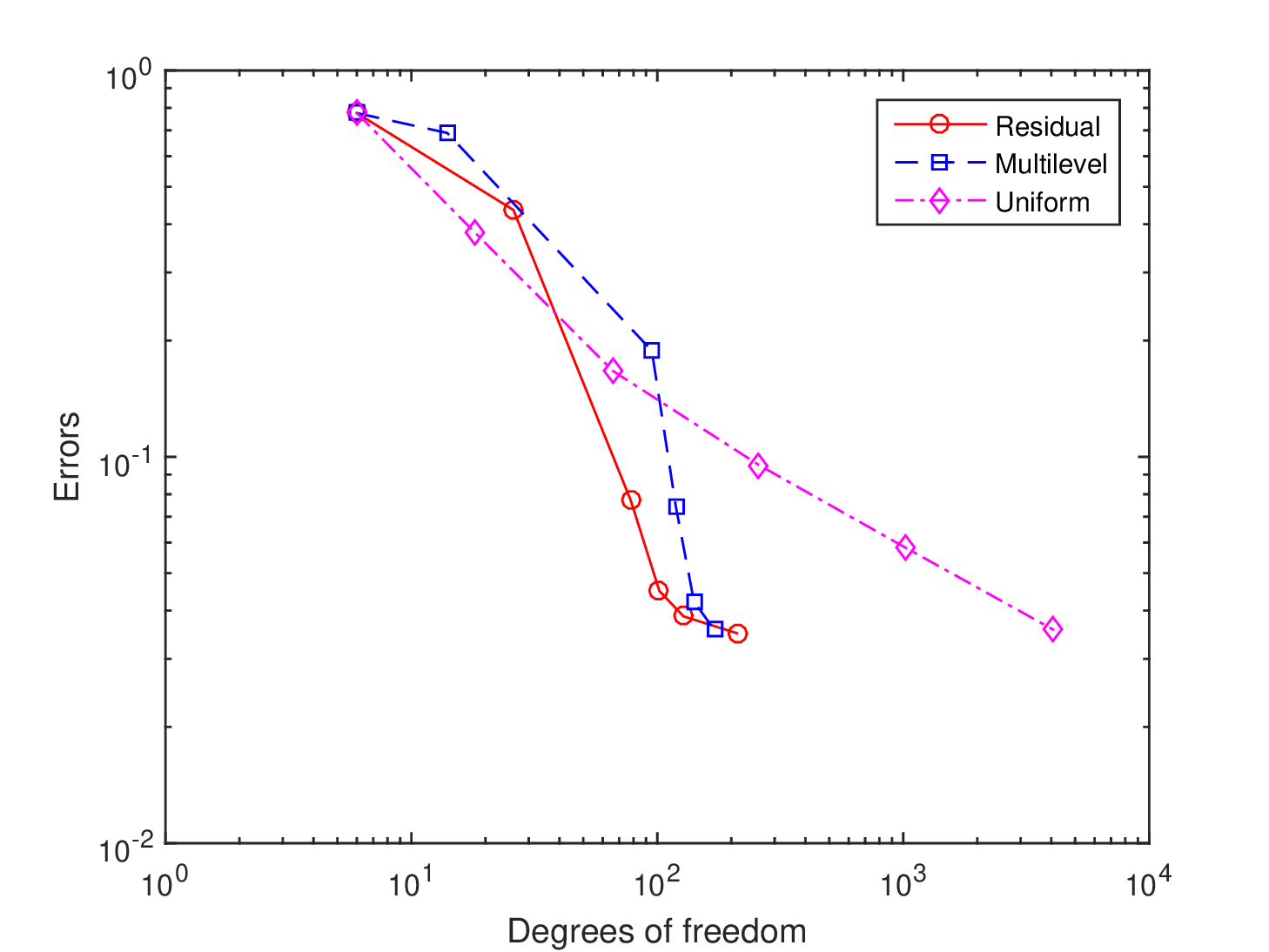}
\end{center}
\end{figure}

\begin{figure}[h]
\begin{center}
\caption{Errors vs Accumulating computation time for $f_1$}
\label{f1:ErrTime}
\includegraphics[width=8cm]{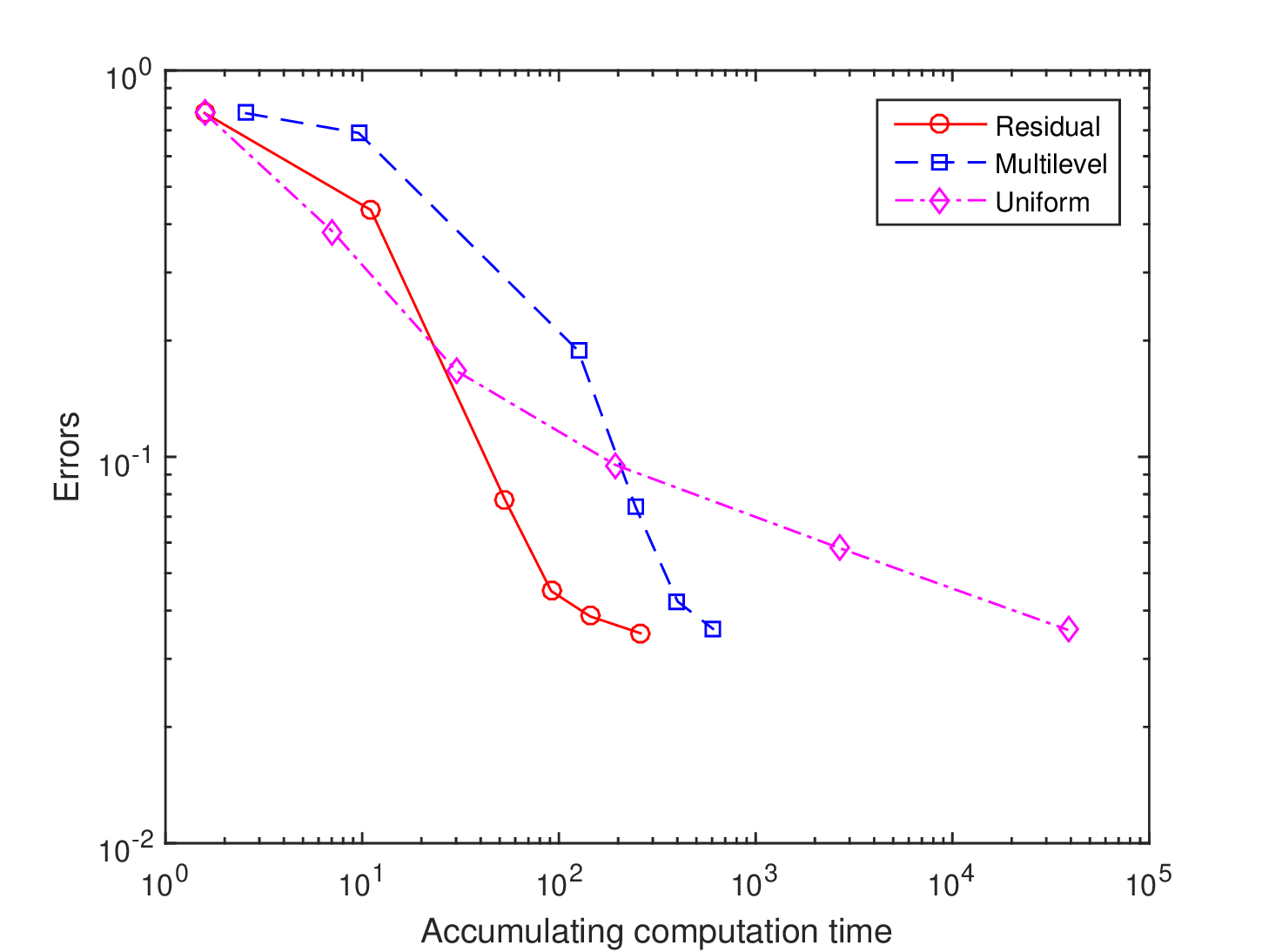}
\end{center}
\end{figure}

\begin{figure}[h]
\begin{center}
\caption{Errors vs DoFs for $f_2$}
\label{f2:ErrDofs}\includegraphics[width=8cm]{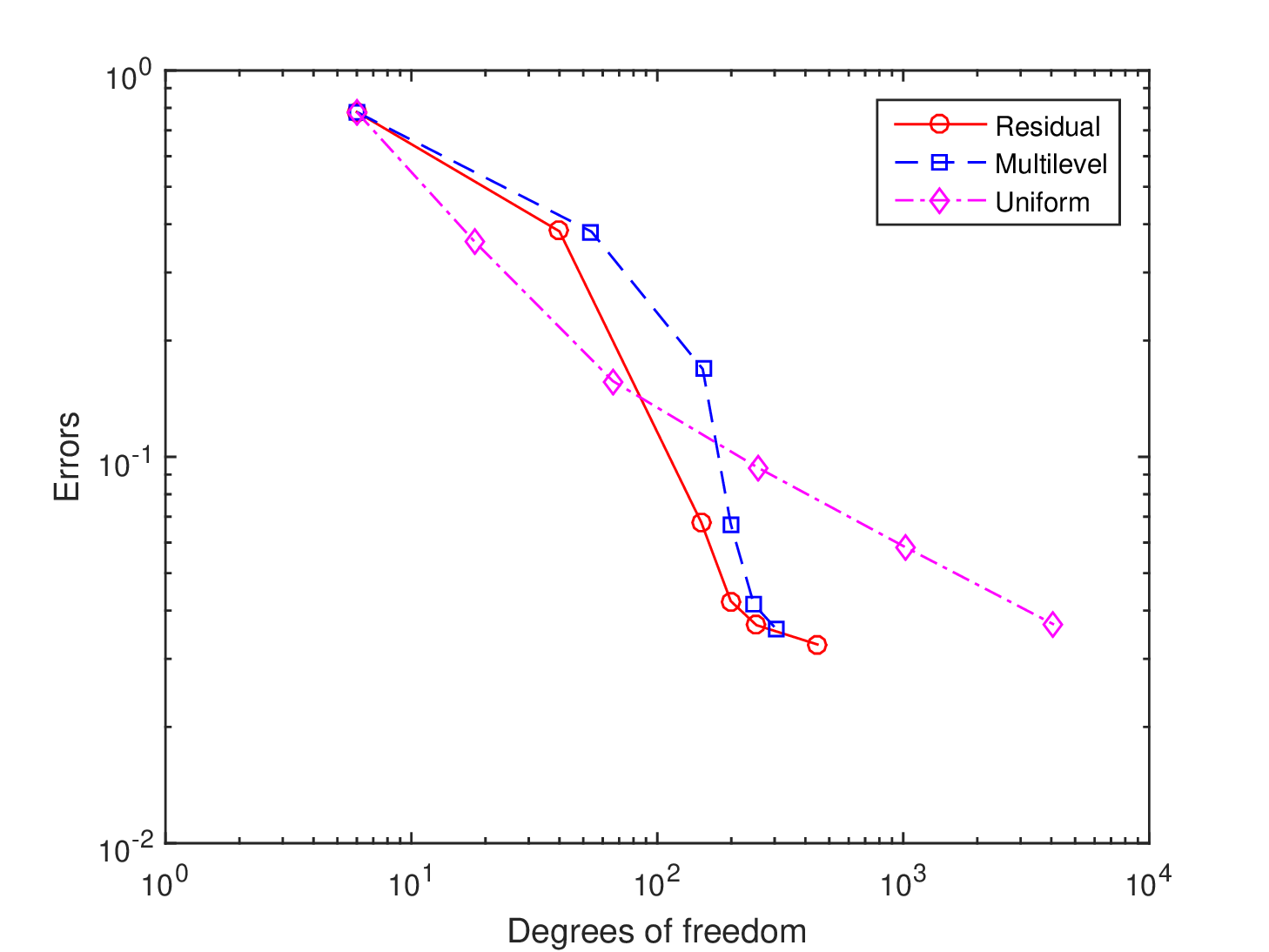}
\end{center}
\end{figure}

\begin{figure}[h]
\begin{center}
\caption{Errors vs Accumulating computation time for $f_2$}
\label{f2:ErrTime}
\includegraphics[width=8cm]{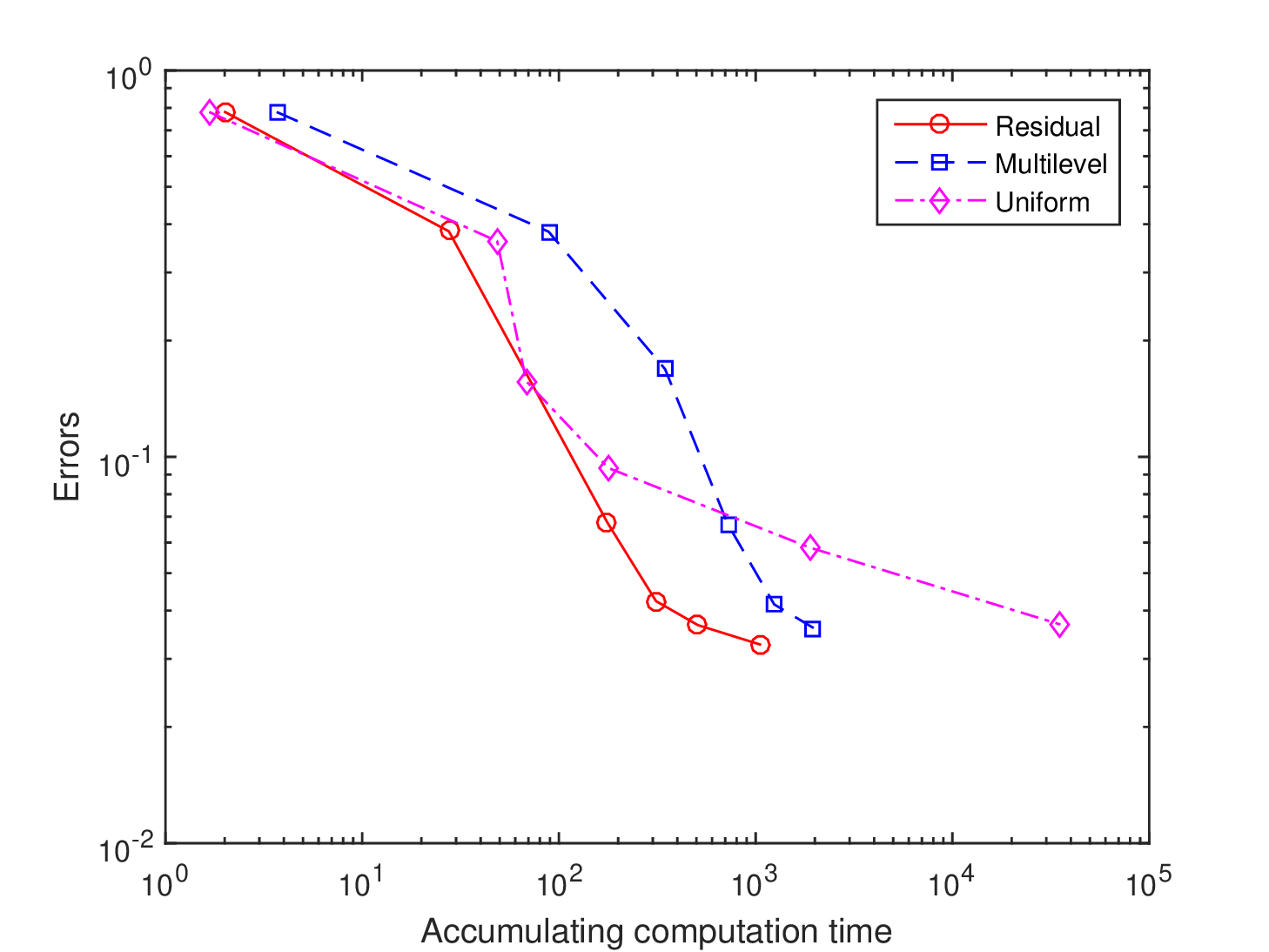}
\end{center}
\end{figure}

% \begin{comment}
\begin{figure}[h]
\begin{center}
\caption{Uniform triangulation with 4098 vertices}
\label{f:uniform4098}
\includegraphics[width=8cm]{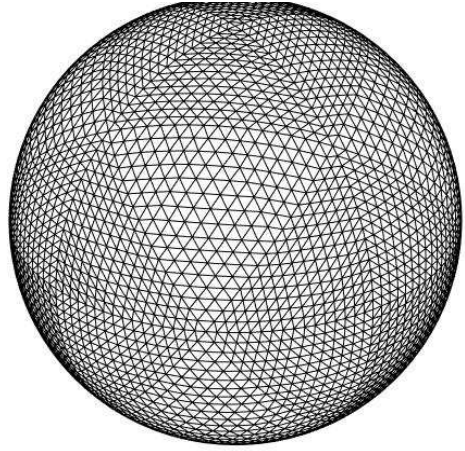}
\end{center}
\end{figure}
% \end{comment}

\begin{figure}[h]
\caption{Adaptive triangulations for $f_1$}
\label{f1:adaptives}

\vspace{0.5cm}

\centering
\begin{subfigure}{.5\textwidth}
  \centering
  \includegraphics[width=7cm]{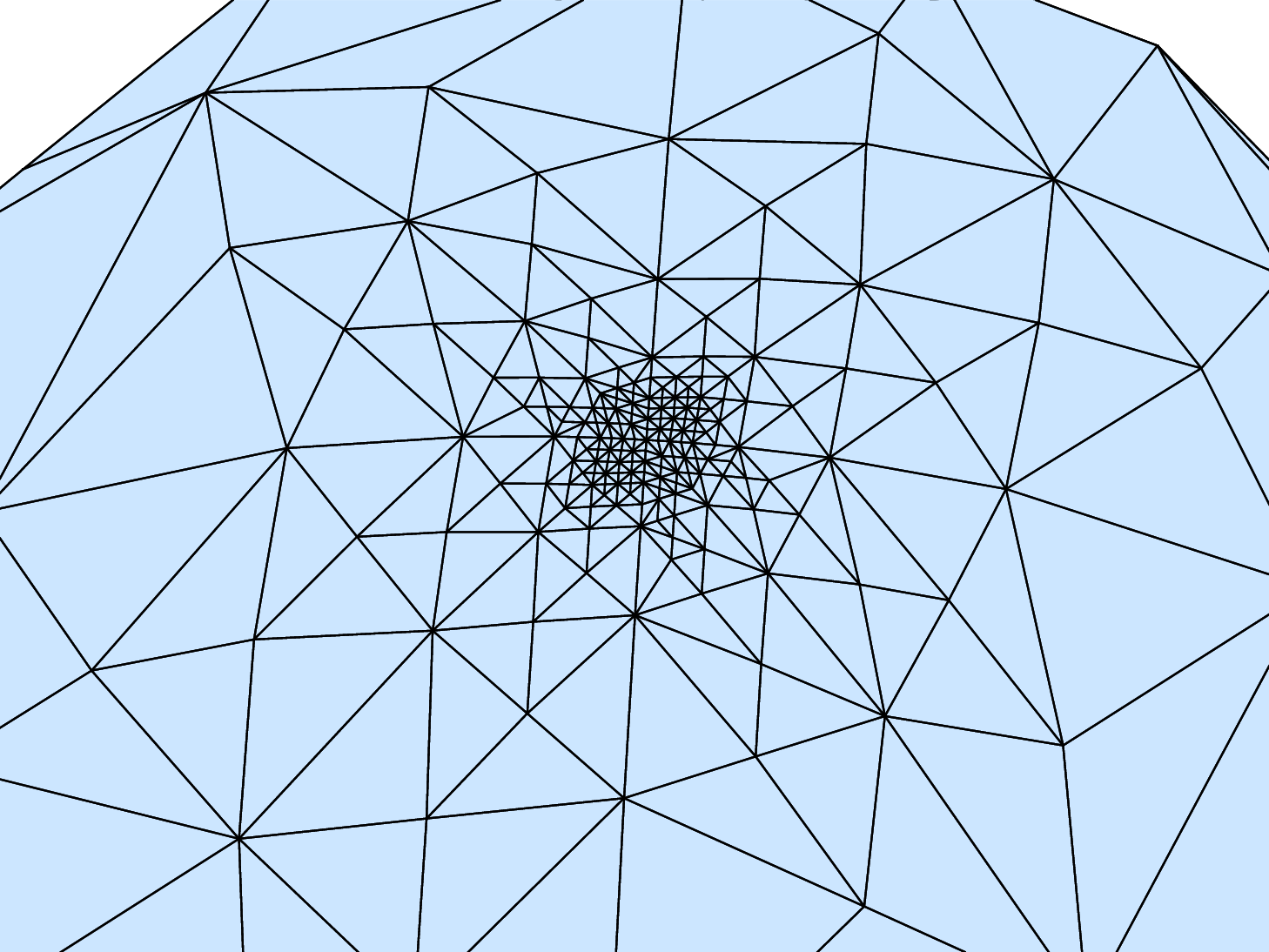}
  
  \vspace{0.2cm}
  
  \caption{Residual adaptive mesh with 211 vertices}
  \label{fig:sub1}
\end{subfigure}%
\begin{subfigure}{.5\textwidth}
  \centering
  \includegraphics[width=7cm]{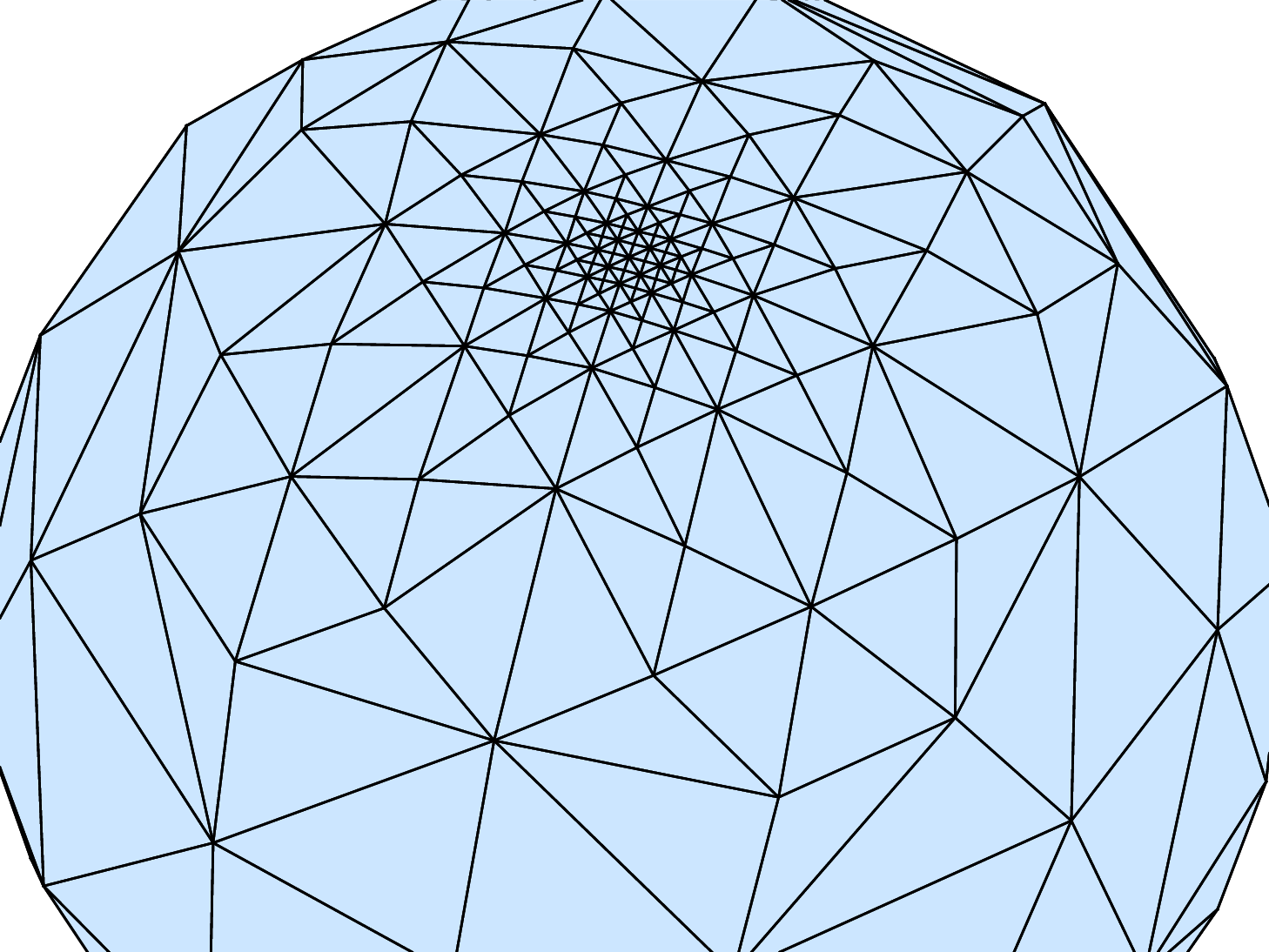}

  \vspace{0.2cm}
  
  \caption{hierarchical adaptive mesh with 170 vertices}
  \label{fig:sub2}
\end{subfigure}
\end{figure}

\begin{figure}[h]
\caption{Residual adaptive triangulation with $448$ vertices for $f_2$}
\label{f2:Re_adaptive448}

\vspace{0.5cm}

\centering
\begin{subfigure}{.5\textwidth}
  \centering
  \includegraphics[width=8cm]{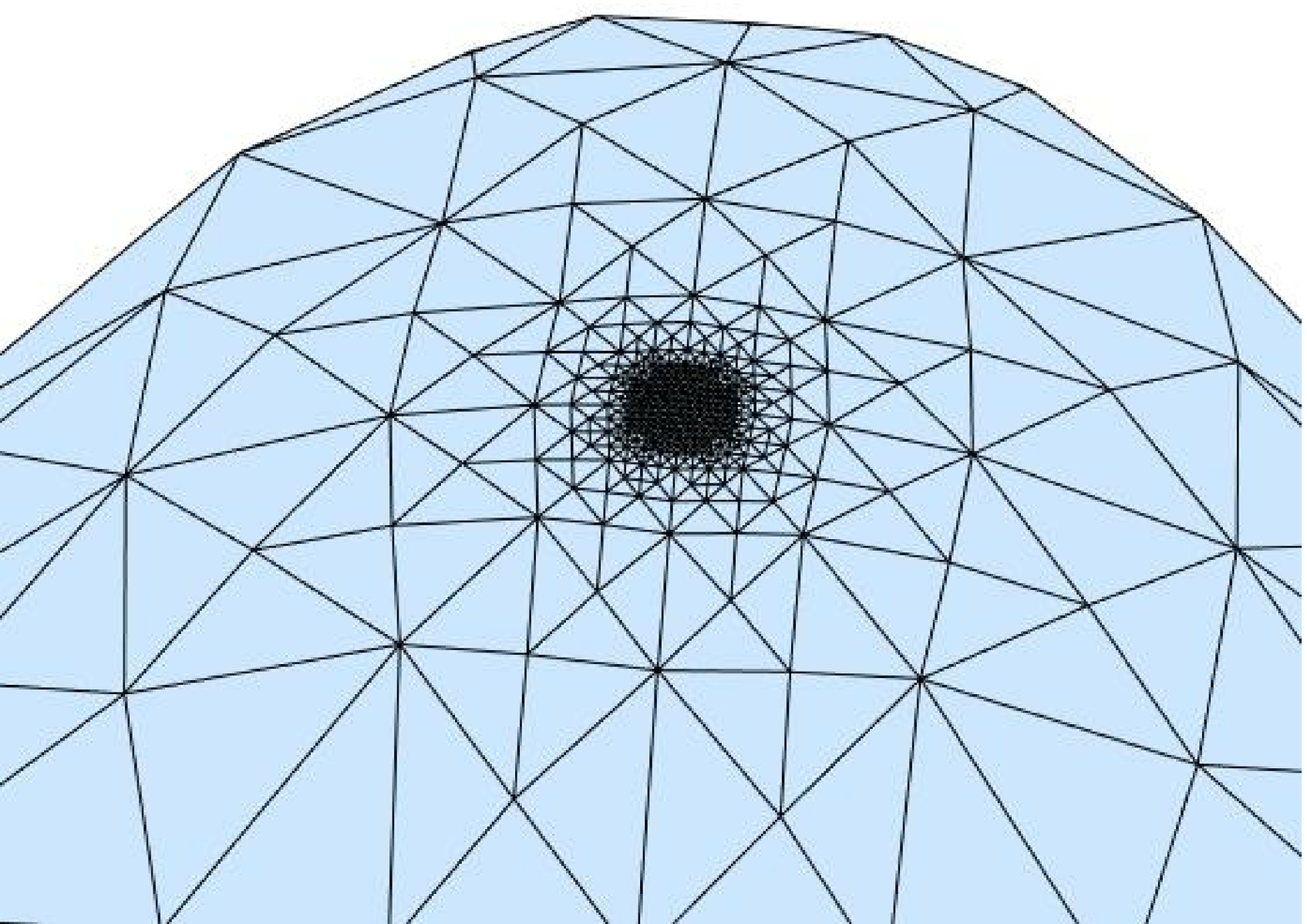}
  
  \vspace{0.2cm}
  
  \caption{At the North Pole}
  \label{fig:sub1}
\end{subfigure}%
\begin{subfigure}{.5\textwidth}
  \centering
  \includegraphics[width=8cm]{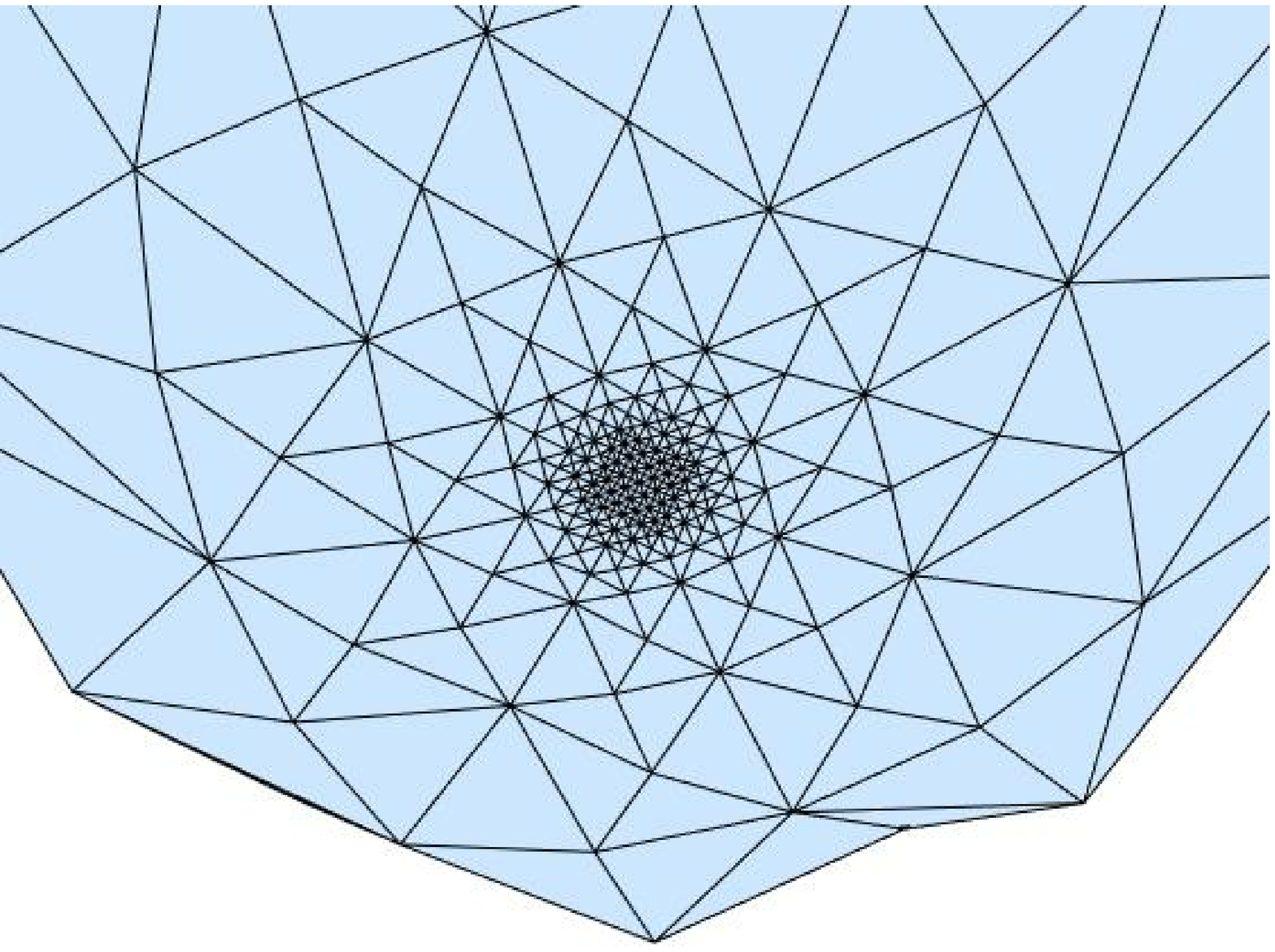}

  \vspace{0.2cm}
  
  \caption{At the South Pole}
  \label{fig:sub2}
\end{subfigure}
\end{figure}

\begin{figure}[h]
\begin{center}
\caption{hierarchical adaptive triangulation with $302$ vertices for $f_2$}
\label{f2:ReAdap302}

\vspace{0.5cm}

\centering
\begin{subfigure}{.5\textwidth}
  \centering
  \includegraphics[width=8cm]{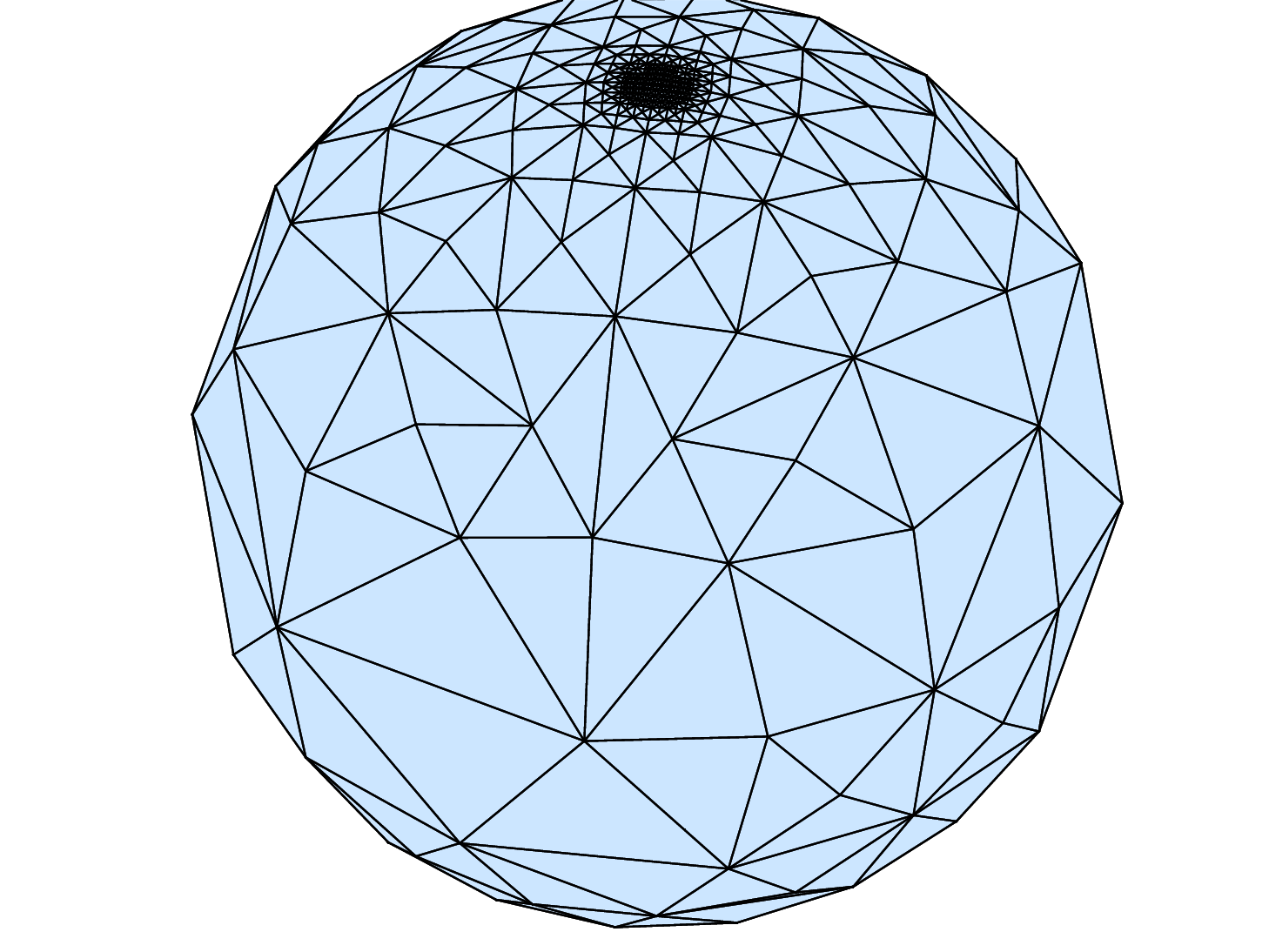}
  
  \vspace{0.2cm}
  
  \caption{At the North Pole}
  \label{fig:sub1}
\end{subfigure}%
\begin{subfigure}{.5\textwidth}
  \centering
  \includegraphics[width=8cm]{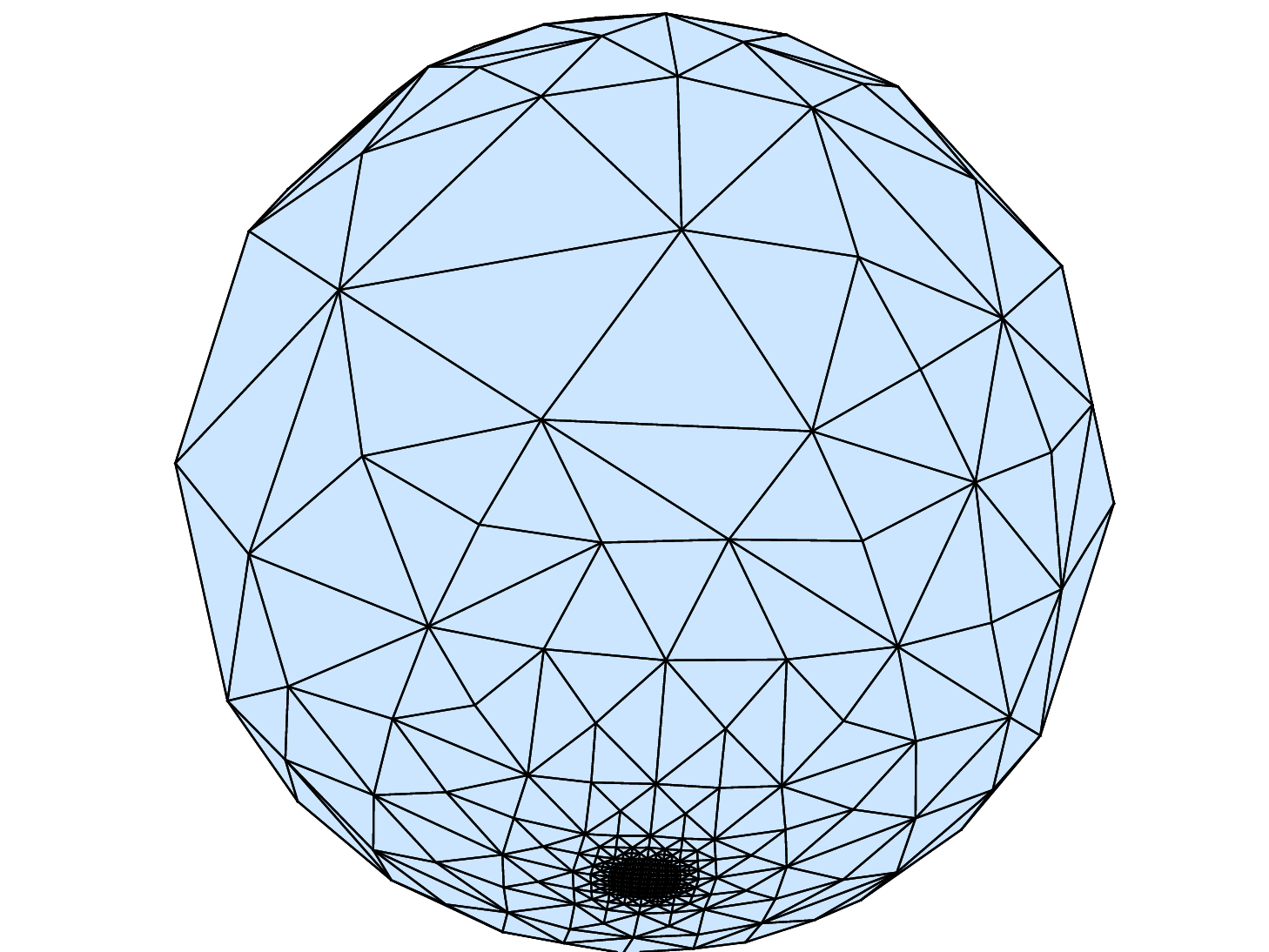}

  \vspace{0.2cm}
  
  \caption{At the South Pole}
  \label{fig:sub2}
\end{subfigure}
\end{center}
\end{figure}
% \end{comment}

\section*{Acknowledgement}
This research is funded by Vietnam National Foundation for Science and 
Technology Development (NAFOSTED) under grant number 101.99--2016.13.


\begin{thebibliography}{10}

\bibitem{AlfNeaSch96a}
P.~Alfeld, M.~Neamtu, and L.~L. Schumaker.
\newblock Bernstein-{B}\'ezier polynomials on spheres and sphere-like surfaces.
\newblock {\em Comput. Aided Geom. Design},  {\bf 13} (1996), 333--349.

\bibitem{AlfNeaSch96b}
P.~Alfeld, M.~Neamtu, and L.~L. Schumaker.
\newblock Dimension and local bases of homogeneous spline spaces.
\newblock {\em SIAM J. Math. Anal.},  {\bf 27} (1996), 1482--1501.

\bibitem{AlfNeaSch96c}
P.~Alfeld, M.~Neamtu, and L.~L. Schumaker.
\newblock Fitting scattered data on sphere-like surfaces using spherical
  splines.
\newblock {\em J. Comput. Appl. Math.},  {\bf 73} (1996), 5--43.

\bibitem{Bnsch1991}
E.~B\"{a}nsch.
\newblock Local mesh refinement in 2 and 3 dimensions.
\newblock {\em {IMPACT} of Computing in Science and Engineering},  {\bf 3}
  (1991), 181--191.

\bibitem{BarLai05}
V.~Baramidze and M.~J. Lai.
\newblock Error bounds for minimal energy interpolatory spherical splines.
\newblock In {\em Approximation theory {XI}: {G}atlinburg 2004}, Mod. Methods
  Math., pages 25--50. Nashboro Press, Brentwood, TN, 2005.

\bibitem{BerLof}
J.~Bergh and J.~L\"ofstr\"om.
\newblock {\em Interpolation Spaces: An Introduction}.
\newblock Springer-Verlag, Berlin, 1976.

\bibitem{Binev2004}
P.~Binev, W.~Dahmen, and R.~DeVore.
\newblock Adaptive finite element methods with convergence rates.
\newblock {\em Numerische Mathematik},  {\bf 97} (2004), 219--268.

\bibitem{Carstensen2004}
C.~Carstensen, M.~Maischak, D.~Praetorius, and E.~Stephan.
\newblock Residual-based a posteriori error estimate for hypersingular equation
  on surfaces.
\newblock {\em Numerische Mathematik},  {\bf 97} (2004), 397--425.

\bibitem{CarstensenMaischakStephan01}
C.~Carstensen, M.~Maischak, and E.~P. Stephan.
\newblock A posteriori error estimate and {$h$}-adaptive algorithm on surfaces
  for {S}ymm's integral equation.
\newblock {\em Numer. Math.},  {\bf 90} (2001), 197--213.

\bibitem{Carstensen2007}
C.~Carstensen and D.~Praetorius.
\newblock Averaging techniques for the a posteriori bem error control for a
  hypersingular integral equation in two dimensions.
\newblock {\em SIAM J. Sci. Comput.},  {\bf 29} (2007), 782--810.

\bibitem{CASCON2007}
J.~M. Cascon, R.~H. Nochetto, and K.~G. Siebert.
\newblock Design and convergence of {AFEM} in h(div).
\newblock {\em Math. Models Methods Appl. Sci.},  {\bf 17} (2007), 1849--1881.

\bibitem{ChernovPham14}
A.~Chernov and D.~Pham.
\newblock Sparse tensor product spectral galerkin {BEM} for elliptic problems
  with random input data on a spheroid.
\newblock {\em Adv. Comput. Math.},  {\bf 41} (2015), 77--104.

\bibitem{Kle05}
J.~H. de~Klerk.
\newblock Hypersingular integral equations--past, present, future.
\newblock {\em Nonlinear Analysis},  {\bf 63} (2005), 533--540.

\bibitem{Drfler1996}
W.~D\"{o}rfler.
\newblock A convergent adaptive algorithm for poisson's equation.
\newblock {\em {SIAM} Journal on Numerical Analysis},  {\bf 33} (1996),
  1106--1124.

\bibitem{Erath2013}
C.~Erath, S.~Funken, P.~Goldenits, and D.~Praetorius.
\newblock Simple error estimators for the galerkin {BEM} for some hypersingular
  integral equation in 2d.
\newblock {\em Applicable Analysis},  {\bf 92} (2013), 1194--1216.

\bibitem{Far88}
G.~Farin.
\newblock {\em Curves and surfaces for computer aided geometric design}.
\newblock Computer Science and Scientific Computing. Academic Press Inc.,
  Boston, MA, 1988.
\newblock A practical guide, With contributions by P. B{\'e}zier and W. Boehm.

\bibitem{FasSch98}
G.~E. Fasshauer and L.~L. Schumaker.
\newblock Scattered data fitting on the sphere.
\newblock In {\em Mathematical Methods for Curves and Surfaces II}, M.~Dahlen,
  T.~Lyche, and L.~L. Schumaker, editors, pages 117--166, Nashville, 1998.
  Vanderbilt University Press.

\bibitem{FreGerSch98}
W.~Freeden, T.~Gervens, and M.~Schreiner.
\newblock {\em Constructive Approximation on the Sphere with Applications to
  Geomathematics}.
\newblock Oxford University Press, Oxford, 1998.

\bibitem{GraKruSch03}
E.~W. Grafarend, F.~W. Krumm, and V.~S. Schwarze, editors.
\newblock {\em Geodesy: the Challenge of the 3rd Millennium}, Berlin, 2003.
  Springer.

\bibitem{Heu14}
N.~Heuer.
\newblock On the equivalence of fractional-order sobolev semi-norms.
\newblock {\em Journal of Mathematical Analysis and Applications},  {\bf 417}
  (2014), 505--518.

\bibitem{HosLas93}
J.~Hoschek and D.~Lasser.
\newblock {\em Fundamentals of computer aided geometric design}.
\newblock A K Peters Ltd., Wellesley, MA, 1993.
\newblock Translated from the 1992 German edition by Larry L. Schumaker.

\bibitem{HsiWen08}
G.~C. Hsiao and W.~L. Wendland.
\newblock {\em Boundary integral equations}, volume 164 of {\em Applied
  Mathematical Sciences}.
\newblock Springer-Verlag, Berlin, 2008.

\bibitem{HuaYu06}
H.-y. Huang and D.-h. Yu.
\newblock Natural boundary element method for three dimensional exterior
  harmonic problem with an inner prolate spheroid boundary.
\newblock {\em J. Comput. Math.},  {\bf 24} (2006), 193--208.

\bibitem{LaiSch07}
M.~J. Lai and L.~L. Schumaker.
\newblock {\em Spline Functions on Triangulations}, volume 110 of {\em
  Encyclopedia of Mathematics and its Applications}.
\newblock Cambridge University Press, Cambridge, 2007.

\bibitem{LeGSteTra11}
Q.~T. Le~Gia, E.~P. Stephan, and T.~Tran.
\newblock Solution to the {N}eumann problem exterior to a prolate spheroid by
  radial basis functions.
\newblock {\em Adv. Comput. Math.},  {\bf 34} (2011), 83--103.

\bibitem{LioMag}
J.~L. Lions and E.~Magenes.
\newblock {\em Non-Homogeneous Boundary Value Problems and Applications I}.
\newblock Springer-Verlag, New York, 1972.

\bibitem{Maischak1997}
M.~Maischak, P.~Mund, and E.~Stephan.
\newblock Adaptive multilevel {BEM} for acoustic scattering.
\newblock {\em Computer Methods in Applied Mechanics and Engineering},  {\bf
  150} (1997), 351--367.

\bibitem{McL00}
W.~McLean.
\newblock {\em Strongly Elliptic Systems and Boundary Integral Equations}.
\newblock CUP, Cambridge, 2000.

\bibitem{MorNea02}
T.~M. Morton and M.~Neamtu.
\newblock Error bounds for solving pseudodifferential equations on spheres.
\newblock {\em J. Approx. Theory},  {\bf 114} (2002), 242--268.

\bibitem{Mul66}
C.~M\"{u}ller.
\newblock {\em Spherical Harmonics}, volume~17 of {\em Lecture Notes in
  Mathematics}.
\newblock Springer-Verlag, Berlin, 1966.

\bibitem{MunSte00}
P.~Mund and E.~P. Stephan.
\newblock An adaptive two-level method for hypersingular integral equations in
  $\mathbb{R}^3$.
\newblock {\em ANZIAM J.},  {\bf 42} (2000), C1019--C1033.

\bibitem{NarWar02}
F.~J. Narcowich and J.~D. Ward.
\newblock Scattered data interpolation on spheres: error estimates and locally
  supported basis functions.
\newblock {\em SIAM J. Math. Anal.},  {\bf 33} (2002), 1393--1410.

\bibitem{NeaSch04}
M.~Neamtu and L.~L. Schumaker.
\newblock On the approximation order of splines on spherical triangulations.
\newblock {\em Advances in Comput. Math.},  {\bf 21} (2004), 3--20.

\bibitem{Ned00}
J.-C. N\'ed\'elec.
\newblock {\em Acoustic and Electromagnetic Equations}.
\newblock Springer-Verlag, New York, 2000.

\bibitem{Nochetto2009}
R.~H. Nochetto, K.~G. Siebert, and A.~Veeser.
\newblock Theory of adaptive finite element methods: An introduction.
\newblock In {\em Multiscale, Nonlinear and Adaptive Approximation}, pages
  409--542. Springer Science + Business Media, 2009.

\bibitem{PhamTran12}
D.~Pham and T.~Tran.
\newblock A domain decomposition method for solving the hypersingular integral
  equation on the sphere with spherical splines.
\newblock {\em Numer. Math.},  {\bf 120} (2012), 117--151.

\bibitem{PhamTran2014}
T.~D. Pham and T.~Tran.
\newblock Strongly elliptic pseudodifferential equations on the sphere with
  radial basis functions.
\newblock {\em Numerische Mathematik},  {\bf 128} (2014), 589--614.

\bibitem{PhamTranChernov11}
T.~D. Pham, T.~Tran, and A.~Chernov.
\newblock Pseudodifferential equations on the sphere with spherical splines.
\newblock {\em Math. Models Methods Appl. Sci.},  {\bf 21} (2011), 1933--1959.

\bibitem{Rivara1984-3}
M.-C. Rivara.
\newblock Mesh refinement processes based on the generalized bisection of
  simplices.
\newblock {\em {SIAM} Journal on Numerical Analysis},  {\bf 21} (1984),
  604--613.

\bibitem{Steinbach08}
O.~Steinbach.
\newblock {\em Numerical approximation methods for elliptic boundary value
  problems}.
\newblock Springer, New York, 2008.
\newblock Finite and boundary elements, Translated from the 2003 German
  original.

\bibitem{Sve83}
S.~L. Svensson.
\newblock Pseudodifferential operators -- a new approach to the boundary
  problems of physical geodesy.
\newblock {\em Manuscr. Geod.},  {\bf 8} (1983), 1--40.

\bibitem{TraLeGSloSte09a}
T.~Tran, Q.~T. {Le Gia}, I.~H. Sloan, and E.~P. Stephan.
\newblock Boundary integral equations on the sphere with radial basis
  functions: Error analysis.
\newblock {\em Appl. Numer. Math.},  {\bf 59} (2009), 2857--2871.

\bibitem{TraLeGSloSte10}
T.~Tran, Q.~T. Le~Gia, I.~H. Sloan, and E.~P. Stephan.
\newblock Preconditioners for pseudodifferential equations on the sphere with
  radial basis functions.
\newblock {\em Numer. Math.},  {\bf 115} (2010), 141--163.

\bibitem{Verfrth1994}
R.~Verf\"{u}rth.
\newblock A posteriori error estimation and adaptive mesh-refinement
  techniques.
\newblock {\em Journal of Computational and Applied Mathematics},  {\bf 50}
  (1994), 67--83.

\bibitem{Pet}
T.~von Petersdorff.
\newblock {\em Randwertprobleme der Elastizit{\"a}tstheorie f{\"u}r Polyeder --
  Sing\-ular\-it{\"a}ten und Approximation mit Randelementmethoden}.
\newblock PhD thesis, Technische Hochschule Darmstadt, Darmstadt, 1989.

\end{thebibliography}
\end{document}